\newtheorem{theorem}{Theorem}[section]
\newtheorem{corollary}[theorem]{Corollary}
\newtheorem{lemma}[theorem]{Lemma}
\newtheorem{proposition}[theorem]{Proposition}
\theoremstyle{definition}
\newtheorem{definition}[theorem]{Definition}
\theoremstyle{remark} \theoremstyle{remark}
\newtheorem{assumption}{Assumption}
\newtheorem{remark}[theorem]{Remark}
\numberwithin{equation}{section}
\DeclareMathOperator*{\esssup}{\mathrm{ess\,sup}}
\title[States of an infinite fission-death system]{Evolution of states of an infinite fission-death system}
\author{Yuri  Kozitsky}
\address{Instytut Matematyki, Uniwersytet Marii Curie-Sk{\l}odowskiej, 20-031 Lublin, Poland}
\email{jkozi@hektor.umcs.lublin.pl}
\author{Agnieszka Tana{\'s}}
\address{Instytut Matematyki, Uniwersytet Marii Curie-Sk{\l}odowskiej, 20-031 Lublin, Poland}
\email{agnieszka.puchacz@interia.eu}
\begin{document}

\subjclass{47D06; 82C05; 60J80; 34K30}%

\keywords{Markov evolution, configuration space, stochastic
semigroup, sun-dual semigroup, correlation function, scale of Banach
spaces }

\begin{abstract}

The evolution of an infinite system of interacting point entities
with traits $x\in \mathds{R}^d$ is studied. The elementary acts of
the evolution are state-dependent death of an entity with rate that
includes a competition term and independent fission in the course of
which an entity gives birth to two new entities and simultaneously
disappears. The states of the system are probability measures on the
corresponding configuration space and the main result of the paper
is the construction of the evolution $\mu_0\to \mu_t$, $t>0$, of
states in the class of sub-Poissonian measures.

\end{abstract}

\maketitle 

\section{Introduction}
\subsection{Posing}

In recent years, there has been a lot of studies of the stochastic
dynamics of structured populations, see, e.g.,
\cite{Ba,BL,Chris,JK11,KK,KK2}. Typically, the structure is
introduced by assigning to each entity a trait $x\in X$. Then the
population dynamics consists in changing the traits of its members
that includes also their appearance and disappearance. Usually, one
endows the trait space $X$ with a locally-compact topology and
assumes that: (a) the populations are locally finite, i.e., compact
subsets of $X$ may contain traits of finite sub-populations only;
(b) the dynamics of a given entity is mostly affected by the
interaction with entities whose traits belong to a compact
neighborhood of its own trait. Then the local structure of the
population is determined by the network of such interactions. Since
the traits of a finite population lie in a compact subset of $X$,
each of its members has a compact neighborhood containing the traits
of the rest of population. In view of this, in order to clear
distinguish between global and local effects one should deal with
infinite populations and noncompact trait spaces. In the statistical
mechanics of interacting physical particles, this conclusion had led
to the concept of the thermodynamic (infinite-volume) limit, see,
e.g., \cite[pp. 5,6]{Simon}, and, thereby, to the description of the
states of thermal equilibrium as probability measures on the space
of particle configurations. Such states are constructed from local
conditional states and are Gibbsian, i.e., they satisfy a specific
consistency condition.

In this article, we study the Markov evolution of a possibly
infinite system of point entities (particles) with trait space
$X=\mathds{R}^d$, $d\geq 1$. The pure states of the system are
locally finite configurations $\gamma \subset \mathds{R}^d$, see,
e.g., \cite{Berns,KK,KK1,KK2}, whereas the general states are
probability measures on the space of all such configurations. The
elementary acts of the evolution are: (a) state-dependent
disappearance (death) with rate $m(x) + \sum_{y\in \gamma\setminus
x} a(x-y)$; (b) independent fission with rate $b(x|y_1, y_2)$ in the
course of which the particle with trait $x\in \gamma$ gives birth to
two particles, with traits $y_1, y_2 \in \mathds{R}^d$, and
simultaneously disappears from $\gamma$. The model with this kind of
death and budding instead of fission, cf. \cite{Chris}, is known as
the Bolker-Pacala model. Its recent study can be found in
\cite{KK,KK1}, see also the literature quoted therein. A similar
model with fission (fragmentation) in which each particle produces a
(random) finite number of new particles was introduced and studied
in \cite{tan}. The main result of the present work is the
construction of the global in time evolution of states in a certain
class of probability measures.

\subsection{The overview}
As mentioned above, the state space of the model is the set $\Gamma$
of all subsets $\gamma \subset \mathds{R}^d$ such that the set
$\gamma_\Lambda:=\gamma\cap\Lambda$ is finite whenever $\Lambda
\subset \mathds{R}^d$ is compact. For compact $\Lambda$, we define
the map $\Gamma\ni \gamma \mapsto N_\Lambda (\gamma) =
|\gamma_\Lambda|\in \mathds{N}_0$, where $|\cdot|$ denotes
cardinality and $\mathds{N}_0$ stands for the set of nonnegative
integers. Then $\mathcal{B}(\Gamma)$ will denote the smallest
$\sigma$-field of subsets of $\Gamma$ with respect to which all
these maps are measurable. That is, $\mathcal{B}(\Gamma)$ is
generated by the family of sets
\begin{equation}
  \label{o1}
 \Gamma^{\Lambda,n} :=\{ \gamma \in \Gamma: N_\Lambda (\gamma) =
 n\},  \qquad n\in \mathds{N}_0, \ \ \Lambda - {\rm compact}.
\end{equation}
It is known \cite{KK,KK2} that $(\Gamma,\mathcal{B}(\Gamma))$ is a
standard Borel space. The set of $n$-point configurations $\Gamma^n$
and the set of all finite configurations $\Gamma_0$ then are
\begin{equation*}
 \Gamma^n = \{ \gamma\in \Gamma: |\gamma|=n\}, \qquad  \Gamma_0 := \bigcup_{n=0}^\infty \Gamma^n \in \mathcal{B}(\Gamma).
\end{equation*}
For compact $\Lambda$, we let $\Gamma_\Lambda =\{ \gamma:
\gamma\subset \Lambda\} \subset
  \Gamma_0$
and define
\begin{equation*}
 \mathcal{B}(\Gamma_\Lambda) = \{ \mathbb{A}\cap \Gamma_\Lambda: \mathbb{A} \in
 \mathcal{B}(\Gamma)\} \subset \mathcal{B}(\Gamma_0) = \{ \mathbb{A}\cap \Gamma_0: \mathbb{A} \in
 \mathcal{B}(\Gamma)\} \subset \mathcal{B}(\Gamma).
\end{equation*}
Clearly, $(\Gamma_0, \mathcal{B}(\Gamma_0))$ and $(\Gamma_\Lambda,
\mathcal{B}(\Gamma_\Lambda))$ are standard Borel spaces. By
$\mathcal{P}(\Gamma)$, $\mathcal{P}(\Gamma_0)$,
$\mathcal{P}(\Gamma_\Lambda)$ we denote the sets of all probability
measures on $(\Gamma,\mathcal{B}(\Gamma))$,
$(\Gamma_0,\mathcal{B}(\Gamma_0))$ and
$(\Gamma_\Lambda,\mathcal{B}(\Gamma_\Lambda))$, respectively.

For a compact $\Lambda$ and $\mathbb{A}\in
\mathcal{B}(\Gamma_\Lambda)$, we set $\mathbb{C}_{\mathbb{A}} = \{
\gamma \in \Gamma: \gamma_\Lambda \in \mathbb{A}\}$ and let
$\mathcal{B}_\Lambda (\Gamma)$ be the sub-$\sigma$-field of
$\mathcal{B}(\Gamma)$ generated by all such \emph{cylinder} sets
$\mathbb{C}_{\mathbb{A}}$. A \emph{cylinder} function $F : \Gamma
\to \mathds{R}$ is a $\mathcal{B}_\Lambda
(\Gamma)/\mathcal{B}(\mathds{R})$-measurable function for some
compact $\Lambda$. Here by $\mathcal{B}(\mathds{R})$ we denote the
Borel $\sigma$-field of subsets of $\mathds{R}$. For a compact
$\Lambda$ and a given $\mu \in \mathcal{P}(\Gamma)$, by setting
\begin{equation}
\label{Rel}
\mu(\mathbb{C}_{\mathbb{A}}) = \mu^\Lambda (\mathbb{A})
\end{equation}
 we determine
$\mu^\Lambda \in \mathcal{P}(\Gamma_\Lambda)$ -- the
\emph{projection} of $\mu$. Note that all such projections
$\{\mu^\Lambda\}_{\Lambda}$ of a given $\mu \in \mathcal{P}(\Gamma)$
are consistent in the Kolmogorov sense.

Each $\mu\in \mathcal{P}(\Gamma)$ is characterized by its values on
the sets (\ref{o1}); in particular, by their local moments
\begin{equation}
  \label{o2}
\int_\Gamma N_\Lambda^m d\mu =: \mu(N_\Lambda^m) = \sum_{n=0}^\infty
n^m \mu(\Gamma^{\Lambda, n}), \qquad m\in \mathds{N}.
\end{equation}
This characterization naturally includes the dependence of
$\mu(\Gamma^{\Lambda, n})$ on $n$. A homogeneous Poisson measure
$\pi_\varkappa \in \mathcal{P}(\Gamma)$ with density $\varkappa >0$
has the property $\pi_\varkappa(\Gamma_0) = 0$. For this measure, it
follows that
\begin{equation}
  \label{o3}
  \pi_\varkappa (\Gamma^{\Lambda, n}) = \frac{\left(\varkappa |\Lambda|\right)^n}{n!} \exp\left( - \varkappa |\Lambda| \right),
\end{equation}
where $|\Lambda|$ stands for the volume of $\Lambda$. In our
consideration, the set of sub-Poissonian measures $\mathcal{P}_{\rm
exp}(\Gamma)$ plays an important role, see Definition \ref{0df}
below and the corresponding discussion in \cite{KK,KK1}. For each
$\mu \in \mathcal{P}_{\rm exp}(\Gamma)$, there exists $\varkappa >0$
such that
\begin{equation}
 \label{o3a}
 \mu(N_\Lambda^m) \leq \pi_\varkappa (N_\Lambda^m),
\end{equation}
holding for all compact $\Lambda$ and $m\in \mathds{N}$.

The Markov evolution is described by the Kolmogorov equation
\begin{equation}
  \label{1}
  \dot{F}_t = L F_t , \qquad F_t|_{t=0} = F_0,
\end{equation}
where $\dot{F}_t$ denotes the time derivative of an
\emph{observable} $F_t:\Gamma\to \mathds{R}$. The operator $L$
determines the model, and in our case it is
\begin{gather}
  \label{2}
 (LF)(\gamma) = \sum_{x\in \gamma}\left( m (x) + \sum_{y\in \gamma\setminus x} a (x-y)\right)
\left[ F(\gamma\setminus x) - F(\gamma)
 \right] \\[.2cm] \nonumber + \sum_{x\in \gamma}
 \int_{(\mathds{R}^d)^2} b(x|y_1 , y_2)\left[ F(\gamma\setminus x \cup\{y_1, y_2\}) - F(\gamma)
 \right] dy_1 d y_2.
\end{gather}
In expressions like $\gamma \cup x$, we treat $x$ as the singleton
$\{x\}$. The first term in (\ref{2}) describes the death of the
particle with trait $x$ occurring: (i) independently with rate $m(x)
\geq 0$; (ii) under the influence (competition) of the rest of the
particles in $\gamma$ occurring with rate
\begin{equation}
  \label{3}
E^a (x, \gamma\setminus x) := \sum_{y\in \gamma\setminus x} a
(x-y)\geq 0.
\end{equation}
The second term in (\ref{2}) describes independent fission  with rate
$b(x|y_1 , y_2)\geq 0$.

The evolution of states $\mu_0 \to \mu_t$ is defined by the
Fokker-Planck equation
\begin{equation}
  \label{4}
  \dot{\mu}_t = L^* \mu_t, \qquad \mu_t|_{t=0} =\mu_0,
\end{equation}
where $L^*$ is related to (\ref{2}) according to the rule $(L^*
\mu)(\mathbb{A})= \mu(L \mathds{1}_{\mathbb{A}} )$, $\mathbb{A}\in
\mathcal{B}(\Gamma)$; $\mathds{1}_{\mathbb{A}}$ is the indicator
function. Both evolutions are in the duality $\mu_0 (F_t) =
\mu_t(F_0)$. Here and in the sequel, we use the notation $\mu(F)=
\int F d \mu$, cf. (\ref{o2}).

The direct use of $L$ and/or $L^*$ as linear operators in
appropriate Banach spaces is possible only if one restricts the
consideration to states on $\Gamma_0$. Otherwise, the sums in
(\ref{2}) and (\ref{3}) -- taken over infinite configurations -- may
not exist. At the same time, constructing evolutions of finite
sub-populations contained in compact sets followed by taking the
`infinite-volume' limit -- as it is done in the theory of Gibbs
fields \cite{Simon} -- can hardly be realized here as the evolution
usually destroys the consistency of the local states. Instead of
trying to construct global states from local ones, we will we
proceed as follows. Let $C_0 (\mathds{R}^d)$ stand for the set of
continuous real-valued functions with compact support. Then the map
\[
\Gamma \ni \gamma \mapsto F^\theta (\gamma) := \prod_{x\in \gamma}
(1+ \theta (x)), \qquad \theta \in \varTheta:=\{\theta\in C_0
(\mathds{R}^d): \theta(x) \in (-1,0]\},
\]
is clearly measurable and satisfies $0<F^\theta (\gamma) \leq 1$ for
all $\gamma$. The set $\varTheta$ clearly has the following
properties: (a) for each pair of distinct $\gamma, \gamma' \in
\Gamma$, there exists $\theta \in \varTheta$ such that $F^\theta
(\gamma)\neq F^\theta (\gamma')$; (b) for each pair $\theta ,
\theta'\in \varTheta$, the point-wise combination $\theta + \theta'
+\theta \theta'$ is also in $\varTheta$; (c) the zero function
belongs to $\varTheta$. From this it follows that $\{F^\theta:
\theta \in \varTheta\}$ is a measure defining class, i.e.,
$\mu(F^\theta)= \nu(F^\theta)$, holding for all $\theta \in
\varTheta$, implies $\mu=\nu$ for each $\mu, \nu \in
\mathcal{P}(\Gamma)$, see \cite[Proposition 1.3.28, page 113]{AKKR}.
Noteworthy, for each $\theta \in \varTheta$, $\mu(F^\theta) =
\mu^{\Lambda_\theta} (F^\theta)$, where a compact $\Lambda_\theta$
is such that $\theta(x) = 0$ for $x\in
\Lambda^c_{\theta}:=\mathds{R}^d \setminus \Lambda_\theta$.

Our results related to (\ref{2}), (\ref{4}) consist in the
following:
\begin{itemize}
  \item[1.] Constructing the evolution $[0,+\infty)\ni t \mapsto \mu_t \in \mathcal{P}(\Gamma_0)$, $\mu_t|_{t=0}=\mu_0\in \mathcal{P}(\Gamma_0)$,
  by proving the existence of a unique classical solution of (\ref{4}) in
  the Banach space $\mathcal{M}$ of signed measures on $\Gamma_0$ with bounded
  variation.
\item[2.] Constructing the evolution $[0,+\infty) \ni t
  \mapsto \mu_t \in \mathcal{P}_{\rm exp}(\Gamma)$, $\mu_t|_{t=0}=\mu_0\in  \mathcal{P}_{\rm exp}(\Gamma)$, such that:
\begin{itemize}
  \item[2.1.] for each compact $\Lambda$ and  $t\geq 0$, $\mu_t^{\Lambda}$ -- as a measure on $\Gamma_0$ --  lies in the
  domain $\mathcal{D}(L^*) \subset \mathcal{M}$;
 \item[2.2.]  for
  each $\theta \in \varTheta$, the map $(0,+\infty) \ni t \mapsto \mu_t (F^\theta)$ is continuously
  differentiable and the following holds
\begin{equation}
  \label{Jan}
 \frac{d}{dt}
  \mu_t (F^\theta) = (L^* \mu^{\Lambda_\theta}_t)(F^\theta).
\end{equation}
\end{itemize}
\end{itemize}
Item 1 is realized in Theorem \ref{1ftm}. The main idea of how to
construct the evolution $\mu_0 \to \mu_t$ stated in item 2 is to
obtain it from the evolution $B_0 (\theta)\to B_t(\theta)$, $\theta
\in \varTheta$ by solving the evolution equation related to those in
(\ref{1}) and (\ref{4}). Here $B_0 (\theta) = \mu_0 (F^\theta)$ with
$\mu_0 \in \mathcal{P}_{\rm exp}(\Gamma)$. This is realized in
Theorem \ref{1tm} and Corollary \ref{Jaco}. One of the hardest
points of this scheme is to prove that $B_t(\theta)= \mu_t
(F^\theta)$ for a unique sub-Poissonian measure. At this stage, we
deal with the evolution of local states constructed in realizing
item 1.

\section{Preliminaries and the Model}
We begin by briefly introducing the relevant aspects of the
technique used in this work. Its more detailed description
(including the notations) can be found in \cite{KK,KK2} and in the
publications quoted therein.

\subsection{Measures and functions on configuration spaces}
It is know that
\begin{equation*}
B_{\pi_\varkappa} (\theta):= \pi_\varkappa (F^\theta) = \exp\left(
\varkappa\int_{\mathds{R}^d} \theta (x)
 dx\right).
\end{equation*}
Obviously, $B_{\pi_\varkappa}$ can be continued to an exponential
type entire function of $\theta \in L^1(\mathds{R}^d)$.
\begin{definition}
 \label{0df}
The set of sub-Poissonian measures $\mathcal{P}_{\rm exp} (\Gamma)$
consists of all those $\mu\in \mathcal{P}(\Gamma)$ for each of which
 $\mu(F^\theta)$ can be continued to an exponential type entire function of
$\theta \in L^1(\mathds{R}^d)$.
\end{definition}
It can be shown that $\mu\in \mathcal{P}_{\rm exp} (\Gamma)$ if and
only if  $\mu(F^\theta)$ might be written in the form
\begin{equation}
 \label{6b}
 \mu(F^\theta) = 1 + \sum_{n=1}^\infty \frac{1}{n!} \int_{(\mathds{R}^d)^n} k^{(n)}_\mu (x_1 , \dots , x_n) \theta (x_1) \cdots \theta(x_n) d x_1 \cdots d x_n,
\end{equation}
where $k^{(n)}_\mu$ is the $n$-th order \emph{correlation function}
of $\mu$. Each $k^{(n)}_\mu$ is a symmetric element of
$L^\infty((\mathds{R}^d)^n)$, and the collection $\{k^{(n)}_\mu
\}_{n\in \mathds{N}}$ satisfies
\begin{equation}
 \label{6c}
 \|k^{(n)}_\mu\|_{L^\infty((\mathds{R}^d)^n} \leq  \varkappa^{ n}, \qquad n \in \mathds{N},
\end{equation}
holding with some $\varkappa >0$. Note that $k^{(n)}_\mu $ is
positive and $k^{(n)}_{\pi_\varkappa} = \varkappa^n$; hence,
(\ref{6c}) means that $k^{(n)}_\mu (x_1 , \dots , x_n)\leq
\varkappa^n$ by which one gets (\ref{o3a}).

Now we turn to functions $G:\Gamma_0 \to \mathds{R}$. It can be
proved that such a function is
$\mathcal{B}(\Gamma_0)/\mathcal{B}(\mathds{R})$-measurable if and
only if there exists the collection of symmetric Borel functions
$G^{(n)}:(\mathds{R}^d)^n \to \mathds{R}$, $n\in \mathds{N}$, such
that
\begin{equation}
 \label{7}
 G(\eta) = G^{(n)}(x_1 , \dots , x_n), \qquad {\rm for}  \ \ \eta = \{x_1 , \dots , x_n\}.
\end{equation}
\begin{definition}
 \label{1df}
 A measurable function $G:\Gamma_0 \to \mathds{R}$ is said to have bounded support if: (a) there exists compact
 $\Lambda \subset \mathds{R}^d$ such that $G(\eta) =0$ whenever $\eta\cap\Lambda \neq \eta$; (b) there exists
 $N\in \mathds{N}$ such that $G(\eta) =0$ whenever $|\eta|>N$. By $B_{\rm bs}(\Gamma_0)$ we denote the set of all bounded functions with bounded support.
 For each $G\in B_{\rm bs}(\Gamma_0)$, by $\Lambda_G$ and $N_G$ we denote the smallest $\Lambda$ and $N$ with the properties just mentioned,
 and use the notations $C_G = \sup_{\eta\in \Gamma_0} |G(\eta)|$.
\end{definition}
The Lebesgue-Poisson measure $\lambda$ on $(\Gamma_0,
\mathcal{B}(\Gamma_0))$ is defined by the integrals
\begin{equation}
 \label{8}
\int_{\Gamma_0} G(\eta) \lambda (d \eta) = G(\varnothing) +
\sum_{n=1}^\infty \frac{1}{n!} \int_{(\mathds{R}^d)^n} G^{(n)}(x_1 ,
\dots x_n) d x_1 \cdots d x_n,
\end{equation}
with all $G\in B_{\rm bs}(\Gamma_0)$. For such $G$, we set
\begin{equation}
 \label{9}
 (KG)(\gamma) = \sum_{\eta \Subset \gamma} G(\eta), \qquad \gamma \in \Gamma,
\end{equation}
where $\eta\Subset \gamma$ means that $\eta\subset \gamma$ and $\eta\in \Gamma_0$. Clearly, cf. Definition \ref{1df}, we have that
\begin{equation}
 \label{10}
 |(KG)(\gamma)| \leq C_G \left(1 + |\gamma\cap \Lambda_G| \right)^{N_G}, \qquad G\in B_{\rm bs}(\Gamma_0).
\end{equation}
Like in (\ref{7}), we introduce the function $k_\mu:\Gamma_0 \to
\mathds{R}$ such that $k_\mu (\eta) = k^{(n)}_\mu (x_1 , \dots,
x_n)$ for $\eta =\{x_1 , \dots, x_n\}$, $n\in \mathds{N}$, and
$k_\mu(\varnothing) =1$. Then we rewrite (\ref{6b}) as follows
\begin{equation}
  \label{11}
\mu (F^\theta) = \int_{\Gamma_0} k_\mu(\eta) e(\theta;\eta)
  \lambda (d \eta), \qquad e(\theta;\eta):= \prod_{x\in
  \eta}\theta(x).
\end{equation}
For $\mu\in \mathcal{P}_{\rm exp}(\Gamma)$ and a compact $\Lambda$,
let $\mu^\Lambda$ be the corresponding projection. It is possible to
show that $\mu^\Lambda$, as a measure on $(\Gamma_\Lambda,
\mathcal{B}(\Gamma_\Lambda))$, is absolutely continuous with respect
to the Lebesgue-Poisson measure $\lambda$. Hence, we may write
\begin{equation}
  \label{12}
\mu^\Lambda ( d\eta) = R^\Lambda_\mu (\eta) \lambda (d\eta), \qquad
\eta \in \Gamma_\Lambda.
\end{equation}
For each compact $\Lambda$, the Radon-Nikodym derivative
$R^\Lambda_\mu$ and the correlation function $k_\mu$ satisfy
\begin{equation}
  \label{13}
k_\mu(\eta) = \int_{\Gamma_\Lambda} R^\Lambda_\mu (\eta \cup \xi)
\lambda ( d \xi), \qquad \eta \in \Gamma_\Lambda.
\end{equation}
For each $G\in B_{\rm bs}(\Gamma_0)$ and $k:\Gamma_0 \to \mathds{R}$
such that $k^{(n)}\in L^\infty ((\mathds{R}^d)^n)$ the integral
\begin{equation}
  \label{14}
  \langle \! \langle G, k \rangle \! \rangle := \int_{\Gamma_0}
  G(\eta) k(\eta) \lambda ( d \eta)
\end{equation}
surely exists. By (\ref{6b}), (\ref{9}), (\ref{12}) and (\ref{14})
we then obtain
\begin{equation}
  \label{15}
 \int_\Gamma \left(KG \right)(\gamma) \mu (d\gamma)  =  \langle \! \langle G, k_\mu \rangle \! \rangle
\end{equation}
holding for all $G\in B_{\rm bs}(\Gamma_0)$ and $\mu \in
\mathcal{P}_{\rm exp}(\Gamma)$. Set
\begin{equation}
  \label{16}
B_{\rm bs}^{\star}(\Gamma_0) = \{ G \in B_{\rm bs}(\Gamma_0): \left(
KG \right) (\gamma) \geq 0 \ \ {\rm for} \  \ {\rm all} \ \ \gamma
\in \Gamma\}.
\end{equation}
By \cite[Theorems 6.1, 6.2 and Remark 6.3]{Tobi} we know that the
following is true.
\begin{proposition}
  \label{1pn}
Let a measurable function $k: \Gamma_0 \to \mathds{R}$ have the
following properties:
\begin{eqnarray*}
&(a) & \ \ \langle \! \langle G, k_\mu \rangle \! \rangle \geq 0,
\qquad {\rm for} \ \ {\rm all} \ \  G \in B_{\rm
bs}^{\star}(\Gamma_0);
\\[.2cm]
 &(b) & \ \ k(\varnothing) =1; \\ &(c) & \ \ k(\eta) \leq
C^{|\eta|}, \qquad {\rm for} \ \ {\rm some} \ \ C>0.
\end{eqnarray*}
Then there exists a unique $\mu\in \mathcal{P}_{\rm exp}(\Gamma)$
such that $k$ is its correlation function.
\end{proposition}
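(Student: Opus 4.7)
The plan is to construct $\mu$ from its projections $\mu^\Lambda\in \mathcal{P}(\Gamma_\Lambda)$ on each compact $\Lambda\subset\mathds{R}^d$ and glue them by the Kolmogorov extension theorem on the standard Borel space $(\Gamma,\mathcal{B}(\Gamma))$. For each compact $\Lambda$, I would first define a candidate Radon--Nikodym density by inverting (\ref{13}):
\begin{equation*}
R^\Lambda(\eta) := \int_{\Gamma_\Lambda} (-1)^{|\xi|} k(\eta \cup \xi)\, \lambda(d\xi), \qquad \eta \in \Gamma_\Lambda.
\end{equation*}
The bound (c) with (\ref{8}) gives $|R^\Lambda(\eta)|\leq C^{|\eta|}e^{C|\Lambda|}$, so $R^\Lambda\in L^1(\Gamma_\Lambda,\lambda)$, and the normalisation (b) yields $\int_{\Gamma_\Lambda}R^\Lambda\,d\lambda=k(\varnothing)=1$. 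A Fubini argument based on the standard Lebesgue--Poisson identity
\begin{equation*}
\int_{\Gamma_\Lambda}\!\int_{\Gamma_\Lambda} h(\eta,\xi)\,\lambda(d\xi)\lambda(d\eta)=\int_{\Gamma_\Lambda}\sum_{\eta\subset\zeta}h(\eta,\zeta\setminus\eta)\,\lambda(d\zeta),
\end{equation*}
combined with the binomial cancellation $\sum_{\beta\subset\zeta}(-1)^{|\beta|}=\mathds{1}_{\{\zeta=\varnothing\}}$, shows that $R^\Lambda$ indeed recovers $k$ through (\ref{13}).

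The main obstacle is to prove $R^\Lambda\geq 0$ $\lambda$-almost everywhere, for only then is $\mu^\Lambda(d\eta):=R^\Lambda(\eta)\lambda(d\eta)$ a bona fide probability measure. The strategy is to test $R^\Lambda$ against an arbitrary nonnegative $H\in B_{\rm bs}(\Gamma_0)$ supported in $\Gamma_\Lambda$, and transfer the Möbius inversion onto the test function. Applying the Lebesgue--Poisson identity above with $h(\eta,\xi)=H(\eta)(-1)^{|\xi|}k(\eta\cup\xi)$ gives
\begin{equation*}
\int_{\Gamma_\Lambda} H(\eta)\, R^\Lambda(\eta)\,\lambda(d\eta) = \langle\!\langle G_H, k\rangle\!\rangle, \qquad G_H(\zeta):=\mathds{1}_{\{\zeta\subset\Lambda\}}\sum_{\eta\subset\zeta} (-1)^{|\zeta\setminus\eta|} H(\eta).
\end{equation*}
A short computation with (\ref{9}) and the same binomial cancellation gives $(KG_H)(\gamma)=H(\gamma_\Lambda)\geq 0$ for every $\gamma\in\Gamma$, so $G_H\in B_{\rm bs}^\star(\Gamma_0)$; condition (a) then forces $\int_{\Gamma_\Lambda} H\cdot R^\Lambda\,d\lambda\geq 0$, and running $H$ through an exhausting family of indicators yields $R^\Lambda\geq 0$. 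This is the subtle step: converting the global $K$-positivity (a) into a pointwise statement on the localised density requires exhibiting the correct $K$-preimage of $H(\gamma_\Lambda)$.

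With $\{\mu^\Lambda\}_\Lambda$ in hand, the next task is Kolmogorov consistency. For compact $\Lambda_1\subset\Lambda_2$ and $\eta\in\Gamma_{\Lambda_1}$, integrating the defining formula for $R^{\Lambda_2}$ over $\Gamma_{\Lambda_2\setminus\Lambda_1}$ and rearranging the double Lebesgue--Poisson integral as above yields
\begin{equation*}
R^{\Lambda_1}(\eta)=\int_{\Gamma_{\Lambda_2\setminus\Lambda_1}} R^{\Lambda_2}(\eta\cup\zeta)\,\lambda(d\zeta),
\end{equation*}
which is precisely the projection identity (\ref{Rel}) expressed on densities. Since $(\Gamma,\mathcal{B}(\Gamma))$ is standard Borel, Kolmogorov's extension theorem produces a unique $\mu\in\mathcal{P}(\Gamma)$ with projections $\mu^\Lambda$. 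Identity (\ref{13}) then identifies $k$ as the correlation function of $\mu$, and the bound (c) together with (\ref{6c}) and Definition \ref{0df} places $\mu$ in $\mathcal{P}_{\rm exp}(\Gamma)$. Finally, uniqueness follows from (\ref{11}): two sub-Poissonian measures with the same $k$ agree on the measure-determining family $\{F^\theta:\theta\in\varTheta\}$ identified in the introduction.
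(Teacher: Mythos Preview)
The paper does not give its own proof of this proposition; it simply quotes the result from \cite{Tobi}. Your outline is precisely the standard route taken there: define local densities $R^\Lambda$ by M\"obius inversion of~(\ref{13}), prove their positivity, verify Kolmogorov consistency, extend, and identify. The normalisation, the recovery of~(\ref{13}), the consistency computation, the extension, and the uniqueness via the measure-determining family $\{F^\theta:\theta\in\varTheta\}$ are all correct as you sketch them.

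The positivity step, however, has a real gap. Your test function $G_H$ need not lie in $B_{\rm bs}^\star(\Gamma_0)$ because it generally fails condition~(b) of Definition~\ref{1df}. For instance, with $H=\mathds{1}_{\{\varnothing\}}$ one gets $G_H(\zeta)=(-1)^{|\zeta|}\mathds{1}_{\Gamma_\Lambda}(\zeta)$, which is nonzero for every finite $\zeta\subset\Lambda$, so no uniform $N$ with $G_H(\zeta)=0$ for $|\zeta|>N$ exists; hence $G_H\notin B_{\rm bs}(\Gamma_0)$ and hypothesis~(a) cannot be invoked directly. The obvious truncation $G_H\,\mathds{1}_{\{|\cdot|\le M\}}$ does land in $B_{\rm bs}(\Gamma_0)$, but its $K$-transform is the partial M\"obius sum $\sum_{\zeta\subset\gamma_\Lambda,\,|\zeta|\le M}G_H(\zeta)$, which need not be nonnegative when $|\gamma_\Lambda|>M$, so it is not in $B_{\rm bs}^\star(\Gamma_0)$ either.

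Bridging this gap is exactly the technical core of the argument in \cite{Tobi}. One route is to build, for each compact $\Lambda$, a positive linear functional on \emph{polynomial} cylinder observables (these are precisely the $K$-images of $B_{\rm bs}(\Gamma_0)$, so~(a) applies), and then extend by a Riesz--Markov/Stone--Weierstrass type density argument to all bounded measurable observables on $\Gamma_\Lambda$. Another is a carefully designed approximation that stays $K$-positive while exploiting the growth bound~(c) to pass to the limit by dominated convergence. Either way, the sentence ``so $G_H\in B_{\rm bs}^\star(\Gamma_0)$'' is where your sketch requires substantive additional work, and you should flag it as such rather than as a short computation.
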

Throughout the paper we use the following easy to check identities
holding for appropriate functions $g:\mathds{R}^d \to \mathds{R}$
and $G:\Gamma_0 \to \mathds{R}$:
\begin{equation}
  \label{17}
\forall x\in \gamma\qquad   \sum_{\eta \Subset \gamma}\prod_{z\in
\eta} g(z)= (1+ g(x))\sum_{\eta \Subset \gamma\setminus
x}\prod_{z\in \eta} g(z),
\end{equation}
\begin{equation}
  \label{18}
  \int_{\Gamma_0} \sum_{\xi \subset \eta} G(\xi, \eta, \eta
  \setminus \xi) \lambda ( d\eta)= \int_{\Gamma_0} \int_{\Gamma_0} G(\xi, \eta\cup \xi,
  \eta) \lambda ( d\xi) \lambda ( d\eta).
\end{equation}

\subsection{The model}
As mentioned above, the model which we consider in this work is
described by the generator given in (\ref{2}). Its entries are
subject to the following
\begin{assumption}
  \label{ass1}
The nonnegative measurable $a$, $b$ and $m$ satisfy:
\begin{itemize}
\item[(i)] $a$ is integrable and bounded; hence, we may set $$\sup_{x\in \mathds{R}^d}a(x) =
a^*, \qquad \int_{ \mathds{R}^d}a(x) dx = \langle a \rangle.$$
\item[(ii)]
There exist positive $r$ and $a_*$ such that $a(x) \geq a_*$
whenever $|x|\leq r$.
\item[(iii)] For each $x\in \mathds{R}^d$, $b(x|y_1, y_2) d y_1 d y_2$ is a symmetric finite measure on $(\mathds{R}^d)^2$; hence, we may set
\[
\langle b \rangle = \int_{(\mathds{R}^d)^2} b(x|y_1, y_2) d y_1 d
y_2,
\]
where, for simplicity, we consider the translation invariant case.
The mentioned symmetry means that $b(x|y_1, y_2) = b(x|y_2, y_1)$.
\item[(iv)]
The function $$\beta (y_1 - y_2) = \int_{\mathds{R}^d} b(x|y_1 ,
y_2) d x
$$ is supposed
to be such that $\sup_{x\in \mathds{R}^d}\beta(x) =:
\beta^*<\infty$. By the translation invariance it follows that
$$\int_{ \mathds{R}^d}\beta(x) dx = \langle b \rangle.$$
\end{itemize}
\end{assumption}
Noteworthy, we do not exclude the case where $b$ is a distribution.
For instance, by setting $$b(x|y_1, y_2)= \frac{1}{2} \left(\delta
(x-y_1) + \delta (x-y_2) \right)\beta (y_1-y_2),$$ we obtain the
Bolker-Pacala model  \cite{KK1} as a particular case of our model.
\begin{remark}
  \label{1rk}
The function $\beta$ describes the dispersal of siblings, which
compete with each other. As in the Bolker-Pacala model, here the
following situations may occur:
\begin{itemize}
  \item \emph{short dispersal:} there exists $\omega >0$
  such that $a(x)
   \geq \omega \beta(x)$ for all $x\in \mathds{R}^d$;
  \item \emph{long dispersal:} for each $\omega >0$, there
  exists $x\in \mathds{R}^d$ such that $ a(x)
  < \omega \beta(x)$.
\end{itemize}
\end{remark}
For $\eta \in \Gamma_0$, we set, cf. (\ref{3}),
\begin{eqnarray}
  \label{19}
E^a(\eta) & = & \sum_{x\in \eta} E^a(x, \eta\setminus x) =
\sum_{x\in
\eta} \sum_{y\in \eta\setminus x} a(x-y), \\[.2cm] \nonumber
E^b(\eta) & = & \sum_{x\in \eta} \sum_{y\in \eta\setminus x}
\beta(x-y) = \sum_{x\in \eta} \sum_{y\in \eta\setminus x}
\int_{\mathds{R}^d} b(z|x, y) d z.
\end{eqnarray}
The properties mentioned in (ii) and (iv) of Assumption \ref{ass1}
imply the following fact, proved in \cite[Lemma 3.1]{KKa}. For the
reader convenience, we repeat the proof in Appendix below.
\begin{proposition}
  \label{2pn}
There exist $\omega>0$ and $\upsilon\geq 0$ such that the following
holds
\begin{equation}
\label{2pnN} \upsilon |\eta| + E^a(\eta) \geq \omega E^b(\eta),
\qquad \eta\in \Gamma_0.
\end{equation}
\end{proposition}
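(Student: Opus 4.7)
The strategy is to dominate the quadratic fission functional $E^b$ by the quadratic competition functional $E^a$ on small scales using Assumption~\ref{ass1}(ii), and to absorb the remaining contribution into the linear term $\upsilon|\eta|$ via the integrability of $\beta$ from Assumption~\ref{ass1}(iv).

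Concretely, I would partition $\mathds{R}^d$ into a disjoint family of cubes $\{Q_k\}_{k\in\mathds{Z}^d}$ of side length $r/\sqrt{d}$, so that any two distinct points in a common cube have distance at most $r$. Writing $n_k:=|\eta\cap Q_k|$ and $|\eta|=\sum_k n_k$, Assumption~\ref{ass1}(ii) gives $a(x-y)\geq a_*$ for such pairs, and restricting the double sum in $E^a$ to same-cube pairs yields $E^a(\eta)\geq a_*\sum_k n_k(n_k-1)$. For $E^b$, I would group by cube pairs and introduce $\alpha_m:=\sup\{\beta(x-y):x\in Q_0,\,y\in Q_m\}$, which by translation invariance depends only on $m$ and which is even in $m$ since $\beta$ is even (a consequence of the symmetry in Assumption~\ref{ass1}(iii)). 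Then
\[
E^b(\eta) \;\leq\; \sum_{k,l} n_k n_l\,\alpha_{k-l} \;\leq\; C\sum_k n_k^2, \qquad C := \sum_m \alpha_m,
\]
the second inequality by $2n_k n_l\leq n_k^2+n_l^2$ and the symmetry of $\alpha$. Combining with $\sum_k n_k^2=\sum_k n_k(n_k-1)+|\eta|$ gives $E^b(\eta)\leq (C/a_*)\,E^a(\eta) + C|\eta|$, proving the proposition with $\omega=a_*/C$ and $\upsilon=a_*$.

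The main technical point---and the step I expect to be the hardest---is proving $C=\sum_m\alpha_m<\infty$ from Assumption~\ref{ass1}(iv). The natural route is to compare $\alpha_m$ with a local average of $\beta$ over the enlarged cube $Q_0-Q_m$ (of side $2r/\sqrt d$): since these enlarged cubes cover $\mathds{R}^d$ with multiplicity $2^d$, bounding $\alpha_m$ by $\beta^*$ on the finitely many central indices and by a constant multiple of $\int_{Q_0-Q_m}\beta(z)\,dz$ on the remaining ones reduces the summability of $\{\alpha_m\}$ to the combined input $\beta\in L^1\cap L^\infty$. If that comparison should fail without extra regularity on $\beta$, the alternative would be to split $E^b$ into a short-range piece ($|x-y|\leq r$), absorbed by $E^a$ via $\beta^*/a_*$, and a long-range piece estimated directly through the lattice convolution above.
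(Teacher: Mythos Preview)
Your quadratic-form strategy is sound and more direct than the paper's, but the step you flag as hardest is a genuine obstruction under the hypotheses as stated. The comparison you propose---bounding $\alpha_m=\sup_{Q_0-Q_m}\beta$ by a constant multiple of $\int_{Q_0-Q_m}\beta$ away from the origin---fails for general measurable $\beta\in L^1\cap L^\infty$: one can place a spike of height $\beta^*$ and arbitrarily small mass inside every difference cube, producing $\alpha_m=\beta^*$ for all $m$ while keeping $\langle b\rangle<\infty$. Then $C=\sum_m\alpha_m=\infty$, and neither your main route nor the short/long-range split (which still passes through the same lattice convolution for the long-range part) closes. Assumption~\ref{ass1}(iv) alone does not give what you need.

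The paper's own proof hits the same wall and resolves it by \emph{invoking Riemann integrability of $\beta$}---asserted at the start of the appendix though not present in Assumption~\ref{ass1}. Under that extra regularity, for sufficiently small mesh $h$ the upper Darboux sum $h^d\sum_l\sup_{E_l}\beta$ is finite and close to $\langle b\rangle$; a coarsening argument then shows that your $C$ at the fixed mesh $r/\sqrt d$ (or any refinement of it) is finite as well. So with the same tacit hypothesis your argument goes through.

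Granting that, the two proofs are structurally quite different. The paper argues by induction on $|\eta|$: it selects the point $x\in\eta$ of maximal local density $s=\max_{y\in\eta}|\eta\cap K_{2r}(y)|$, uses a sphere-packing bound (involving the packing constant $\Delta(d)$) on maximal $r$-separated subsets of each cell to estimate $\sum_{y\in\eta\setminus x}\beta(x-y)\leq s\delta$, pairs this against $\sum_{y\in\eta\setminus x}a(x-y)\geq (s-1)a_r$, and concludes that the one-point increment $U_\omega(x,\eta\setminus x)\geq 0$. Your route bypasses both the induction and the packing apparatus entirely via the occupation numbers $n_k$ and the identity $\sum_k n_k^2=\sum_k n_k(n_k-1)+|\eta|$, arriving at $\omega=a_*/C$ and $\upsilon=a_*$ in a couple of lines. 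This is cleaner; the only cost is that your constants depend on the upper Darboux sum $C$ of $\beta$ rather than directly on $\langle b\rangle$ and $\beta^*$ as in the paper.
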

The inequality in (\ref{2pnN}) can be rewritten in the form
\begin{equation}
\Phi_{\omega}(\eta):= \sum_{x\in \eta} \sum_{y\in \eta\setminus x} \left[ a(x-y)-\omega \int_{\mathbb{R}^d}b(z|x,y)dz \right] \ge -\upsilon |\eta|.
\label{fi}
\end{equation}
\begin{proposition}
\label{pfi} Assume that (\ref{fi}) holds for some $\omega_0>0$ and
$\upsilon_0>0$. Then for each $\omega < \omega_0$, it holds also for
$\upsilon=\upsilon_0\omega/\omega_0$.
\end{proposition}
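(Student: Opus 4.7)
The plan is to exploit the fact that $\Phi_\omega(\eta) = E^a(\eta) - \omega E^b(\eta)$ is an \emph{affine} function of the parameter $\omega$, together with the nonnegativity of $a$ which makes $\Phi_0(\eta) = E^a(\eta)\ge 0$ for every $\eta\in\Gamma_0$. These two facts are the only inputs needed besides the given hypothesis.

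Concretely, for $\omega\in(0,\omega_0)$ I would set $t=\omega/\omega_0\in(0,1)$ and write the convex-combination identity
\begin{equation*}
\Phi_\omega(\eta)\;=\;(1-t)\,\Phi_0(\eta)+t\,\Phi_{\omega_0}(\eta),
\end{equation*}
which follows immediately by linearity in $\omega$ of the bracket in the definition of $\Phi_\omega$. Since $a\ge 0$, each summand of $\Phi_0(\eta)=E^a(\eta)$ is nonnegative, so $(1-t)\Phi_0(\eta)\ge 0$. Applying the standing assumption $\Phi_{\omega_0}(\eta)\ge -\upsilon_0|\eta|$ to the second term gives
\begin{equation*}
\Phi_\omega(\eta)\;\ge\;t\,\Phi_{\omega_0}(\eta)\;\ge\;-t\,\upsilon_0\,|\eta|\;=\;-\frac{\upsilon_0\,\omega}{\omega_0}\,|\eta|,
\end{equation*}
which is exactly the claim with $\upsilon=\upsilon_0\omega/\omega_0$.

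There is no real obstacle here: the result is essentially the trivial observation that a lower bound of the form $F(\omega_0)\ge -\upsilon_0$ for an affine map $\omega\mapsto F(\omega)$ with $F(0)\ge 0$ automatically yields the linearly interpolated bound at every intermediate $\omega$. The only things to verify in writing it up are (i) that the decomposition above is stated at the level of individual summands (so positivity of $E^a$ can be used termwise, though the global inequality $E^a\ge 0$ already suffices) and (ii) that the hypothesis is being used with the \emph{same} configuration $\eta$ appearing on both sides. Everything else is bookkeeping.
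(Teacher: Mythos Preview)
Your proof is correct and essentially identical to the paper's: both write $\Phi_\omega(\eta)$ as $(1-\omega/\omega_0)E^a(\eta)+(\omega/\omega_0)\Phi_{\omega_0}(\eta)$ (the paper factors this as $\frac{\omega}{\omega_0}\bigl[(\frac{\omega_0}{\omega}-1)E^a(\eta)+\Phi_{\omega_0}(\eta)\bigr]$), drop the nonnegative $E^a$ term, and apply the hypothesis.
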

\begin{proof}
For $\omega \in [0, \omega_0]$ by adding and subtracting $\frac{\omega}{\omega_0}E^a(\eta)$
we obtain
$$\Phi_\omega(\eta)=\frac{\omega}{\omega_0}
\left[ \left(\frac{\omega_0}{\omega}-1 \right)
E^a(\eta)+\Phi_{\omega_0}(\eta) \right]\ge -\frac{\omega}{\omega_0}\upsilon_0|\eta|.$$
\end{proof}

\section{The Evolution of States of the Finite System}

Here we assume that the initial state in (\ref{4}) has the property
$\mu_0(\Gamma_0)=1$, i.e., the system in $\mu_0$ is finite. Then the
evolution will be constructed in the Banach space of signed measures
with bounded variation, where the generator $L^*$ can be defined as
an unbounded linear operator and $C_0$-semigroup techniques can be
applied.

\subsection{The statement}

As just mentioned, we will solve (\ref{4}) in the Banach space
$\mathcal{M}$ of all signed measures on
$(\Gamma_0,\mathcal{B}(\Gamma_0))$ with bounded variation. Let
$\mathcal{M}^{+}$ stand for the cone of positive elements of
$\mathcal{M}$. By means of the Hahn-Jordan decomposition $\mu =
\mu^{+} - \mu^{-}$, $\mu^{\pm}\in \mathcal{M}^{+}$,  the norm of
$\mu\in \mathcal{M}$ is set to be $\|\mu\|_{\mathcal{M}}= \mu^{+}
(\Gamma_0) + \mu^{-}(\Gamma_0)$. Then $\mathcal{P}(\Gamma_0)$ is a
subset of $ \mathcal{M}^{+}$. The linear functional
$\varphi_{\mathcal{M}}(\mu) := \mu(\Gamma_0)= \mu^{+}(\Gamma_0) -
\mu^{-}(\Gamma_0)$ has the property $\varphi_{\mathcal{M}}(\mu) =
\|\mu\|_{\mathcal{M}}$ for each $\mu\in\mathcal{M}^{+}$. That is,
$\|\cdot \|_{\mathcal{M}}$ is additive on the cone $\mathcal{M}^{+}$
and hence $\mathcal{M}$ is an $AL$-space, cf. \cite{TV}.

For a strictly increasing function $\chi: \mathds{N}_0 \to [0,
+\infty)$, we set
\begin{equation}
  \label{18a}
\mathcal{M}_\chi =\left\{ \mu \in \mathcal{M}: \int_{\Gamma_0}\chi(
|\eta|) \mu^{\pm} (d \eta)<\infty\right\}, \qquad
\mathcal{M}^{+}_\chi = \mathcal{M}_\chi \cap \mathcal{M}^{+},
\end{equation}
and introduce
\begin{eqnarray}
  \label{18b}
  \varphi_{\mathcal{M}_\chi} (\mu) =  \int_{\Gamma_0} \chi(|\eta|) \mu^{+}
(d \eta)- \int_{\Gamma_0} \chi( |\eta|) \mu^{-} (d \eta), \qquad \mu
\in \mathcal{M}_\chi.
\end{eqnarray}
Note that $\mathcal{M}_\chi$ is a proper subset of $\mathcal{M}$ and
the corresponding embedding is continuous. Set, cf. Assumption
\ref{ass1} and (\ref{19}),
\begin{equation}
  \label{22}
\Psi(\eta) = M(\eta) + E^a (\eta) + \langle b \rangle |\eta|, \qquad
M(\eta):= \sum_{x\in \eta} m(x) \leq m^* |\eta|,
\end{equation}
and then
\begin{equation}
  \label{22a}
 \mathcal{D}= \left\{ \mu \in \mathcal{M}: \int_{\Gamma_0} \Psi(\eta)
 \mu^{\pm}(d\eta)<\infty\right\}.
\end{equation}
By (\ref{19}) we have that $\Psi(\eta)\leq C|\eta|^2$ for an
appropriate $C>0$; hence, $\mathcal{M}_{\chi_2} \subset
\mathcal{D}$, where $\chi_m (n) = (1+n)^m$, $m\in \mathds{N}$. Then,
for $\mu \in \mathcal{D}$, we define
\begin{equation}
  \label{22o}
(A\mu)(d\eta)  =  - \Psi (\eta) \mu(d\eta), \qquad (B\mu)(d\eta)  =
 \int_{\Gamma_0} \Xi (d \eta|\xi) \mu( d \xi),
\end{equation}
where the measure kernel $\Xi$ is
\begin{eqnarray}
  \label{22b}
\Xi (\mathbb{A}|\xi) &=& \sum_{x\in \xi} \left( m(x) + E^a (x, \xi\setminus x) \right)\mathds{1}_{\mathbb{A}}(\xi \setminus x) \\[.2cm] \nonumber
& + & \sum_{x\in \xi}\int_{(\mathds{R}^d)^2} b(x|y_1, y_2)
\mathds{1}_{\mathbb{A}} (\xi \setminus x\cup\{y_1 , y_2\}) d y_1 d
y_2, \qquad \mathbb{A} \in \mathcal{B}(\Gamma_0),
\end{eqnarray}
and $\mathds{1}_{\mathbb{A}}$ is the indicator of $\mathbb{A}$. Then
we set $L^* = A+B$. By direct inspection one checks that $L^*$
satisfies $\mu(LF) = (L^* \mu)(F)$ holding for all $\mu \in
\mathcal{D}$ and appropriate $F:\Gamma_0 \to [0, +\infty)$, see
(\ref{2}).

Along with $\chi_m$ defined above we also consider $\chi^\kappa (n)
:= e^{\kappa n}$, $\kappa >0$, and the space
$\mathcal{M}_{\chi^\kappa}$. By a global solution of (\ref{4}) in
$\mathcal{M}$ with $\mu_0 \in \mathcal{D}$ we understand a
continuous map $[0,+\infty) \ni t \mapsto \mu_t \in
\mathcal{D}\subset \mathcal{M}$, which is continuously
differentiable in $\mathcal{M}$ on $(0,+\infty)$ and is such that
both equalities in (\ref{4}) hold.
\begin{theorem}
  \label{1ftm}
The problem in (\ref{4}) with $\mu_0 \in \mathcal{D}$ has a unique
global solution $\mu_t\in\mathcal{M}$, which has the following
properties:
\begin{itemize}
  \item[{\it(a)}] for each $m\in \mathds{N}$, $\mu_t \in
  \mathcal{M}_{\chi_m}\cap\mathcal{P}(\Gamma_0)$ for all $t>0$
  whenever $\mu_0 \in
  \mathcal{M}_{\chi_m}\cap\mathcal{P}(\Gamma_0)$;
  \item[{\it (b)}] for each $\kappa>0$ and $\kappa' \in (0,
  \kappa)$,  $\mu_t \in \mathcal{M}_{\chi^{\kappa'}}\cap\mathcal{P}(\Gamma_0)$ for all $t\in (0,T(\kappa, \kappa'))$
  whenever $\mu_0 \in
  \mathcal{M}_{\chi^{\kappa}}\cap\mathcal{P}(\Gamma_0)$, where
  \begin{equation}
    \label{22T}
   T(\kappa, \kappa') = \frac{\kappa -
   \kappa'}{\langle b \rangle}e^{-\kappa}  ;
  \end{equation}
\item[{\it (c)}] for all $t>0$, $\mu_t (d\eta) = R_t (\eta) \lambda (d\eta)$ whenever
$\mu_0 (d\eta) = R_0(\eta) \lambda (d\eta)$.
\end{itemize}
\end{theorem}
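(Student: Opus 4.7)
The plan is to view (\ref{4}) as an abstract Cauchy problem in the $AL$-space $\mathcal{M}$ with generator $L^* = A + B$ defined by (\ref{22o})--(\ref{22b}), and to apply a Thieme--Voigt perturbation theorem for positive operators. The multiplication operator $A$ with nonpositive multiplier $-\Psi$ generates the positive contraction semigroup $(S_A(t)\mu)(d\eta) = e^{-t\Psi(\eta)}\mu(d\eta)$, while $B$ is a positive operator on $\mathcal{D}$. Inspection of (\ref{22b}) and (\ref{22}) gives the substochastic mass balance
\[
\varphi_{\mathcal{M}}(A\mu+B\mu) \;=\; -\int_{\Gamma_0}\Psi\,d\mu \;+\; \int_{\Gamma_0}\Psi\,d\mu \;=\; 0, \qquad \mu\in \mathcal{M}^{+}\cap\mathcal{D},
\]
which triggers the Thieme--Voigt construction: it produces a minimal positive $C_0$-semigroup $\{S(t)\}_{t\geq 0}$ of contractions on $\mathcal{M}$ whose generator is a closed extension of $(L^*,\mathcal{D})$.

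The principal obstacle will be \emph{honesty} of $S(t)$, since mass could a priori be lost to infinity through the fission cascade in the Dyson--Phillips iteration. I exclude this via the scale $\{\mathcal{M}_{\chi_m}\}_{m\in\mathds{N}}$ of (\ref{18a}) used as a Lyapunov ladder. Since $\Psi(\eta)\leq C|\eta|^2$ by (\ref{22}) and (\ref{19}), one has $\mathcal{M}_{\chi_2}\subset \mathcal{D}$; moreover, exploiting that fission changes $|\eta|$ by exactly $+1$, a direct calculation yields, for $\mu\in\mathcal{M}^{+}_{\chi_m}\cap \mathcal{D}$,
\[
\varphi_{\mathcal{M}_{\chi_m}}(L^*\mu) \;\leq\; c_m\,\varphi_{\mathcal{M}_{\chi_m}}(\mu),
\]
with $c_m$ depending only on $m$ and $\langle b\rangle$. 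Applying this bound to a bounded truncation of $B$, passing to the limit, and invoking Gr\"{o}nwall's inequality simultaneously establishes the invariance $S(t)\mathcal{M}^{+}_{\chi_m}\subset \mathcal{M}^{+}_{\chi_m}$ asserted in item (a) and, with $m=0$, honesty of $S(t)$; in particular $S(t)$ maps $\mathcal{P}(\Gamma_0)$ into itself. Uniqueness of the classical solution on $\mathcal{D}$ is then standard, since $\mathcal{D}$ turns out to be a core for the generator.

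For item (b), I take the time-dependent test function $F_t(\eta) = e^{\kappa(t)|\eta|}$ with $\kappa(\cdot)$ nonincreasing. Using $F_t(\eta\setminus x) = e^{-\kappa(t)}F_t(\eta)$ and $F_t(\eta\setminus x\cup\{y_1,y_2\}) = e^{\kappa(t)}F_t(\eta)$ in (\ref{2}), dropping the nonpositive death contribution, and using $\kappa(t)\leq\kappa$, one obtains
\[
(LF_t)(\eta) \;\leq\; (e^{\kappa(t)}-1)\langle b\rangle\,|\eta|\,F_t(\eta) \;\leq\; \langle b\rangle\, e^{\kappa}\,|\eta|\,F_t(\eta),
\]
so that
\[
\frac{d}{dt}\mu_t(F_t) \;\leq\; \int_{\Gamma_0}\bigl[\kappa'(t)+\langle b\rangle e^{\kappa}\bigr]\,|\eta|\,F_t(\eta)\,\mu_t(d\eta).
\]
The linear choice $\kappa(t) = \kappa - \langle b\rangle e^{\kappa}\,t$ annihilates the bracket, so $\mu_t(F_t)$ is nonincreasing in $t$; the stopping condition $\kappa(T)=\kappa'$ reproduces exactly (\ref{22T}).

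For item (c), the Duhamel representation $\mu_t = S_A(t)\mu_0 + \int_0^t S_A(t-s)B\mu_s\,ds$ together with its Dyson--Phillips iteration preserves absolute continuity with respect to $\lambda$. Indeed $S_A(\tau)$ acts by pointwise multiplication by the density $e^{-\tau\Psi}$, while the kernel $\Xi(\cdot|\xi)$ in (\ref{22b}) either restricts $\xi$ to $\xi\setminus x$ (which preserves densities via Fubini on the Lebesgue--Poisson measure, cf.\ (\ref{18})) or integrates against the Lebesgue densities $b(x|y_1,y_2)\,dy_1 dy_2$; both operations map $\lambda$-absolutely continuous measures into the same class, so every term of the iteration, and hence the sum $\mu_t$, admits a density $R_t$.
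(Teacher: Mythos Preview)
Your overall architecture matches the paper: both rely on the Thieme--Voigt perturbation theory in the $AL$-space $\mathcal{M}$, with $A$ the multiplication by $-\Psi$ and $B$ the positive kernel operator. For part (a), the paper invokes Proposition~\ref{TVpn} directly, which needs the \emph{stronger} Lyapunov estimate
\[
\varphi_{\mathcal{M}_{\chi_m}}\bigl((A+B)\mu\bigr) \;\leq\; c\,\varphi_{\mathcal{M}_{\chi_m}}(\mu) \;-\; \varepsilon\,\|A\mu\|_{\mathcal{M}}, \qquad \mu\in \mathcal{D}_{\chi_m}\cap\mathcal{M}^{+};
\]
the extra $-\varepsilon\|A\mu\|_{\mathcal{M}}$ is what delivers honesty (stochasticity) and the invariance of $\mathcal{M}_{\chi_m}$ in one stroke. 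You state only the weaker bound without that term and then appeal to truncation plus Gr\"onwall; this can be made to work, but the stronger inequality (cf.\ (\ref{L8})) is in fact no harder to verify, since the death part of $(LF_{\chi_m})(\eta)$ produces a strictly negative contribution $-(M(\eta)+E^a(\eta))\epsilon_m(|\eta|)$ that absorbs $\varepsilon\Psi(\eta)$ for any $\varepsilon<1$.

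Your approach to (b) is genuinely different and, as written, has a gap. The paper does \emph{not} use a time-dependent exponent; instead it splits $B=B_1+B_2$ (death vs.\ fission), shows via Proposition~\ref{TV0pn} that $(A+B_1,\mathcal{D}_\kappa)$ generates a sub-stochastic semigroup on $\mathcal{M}_{\chi^\kappa}$ for each fixed $\kappa$, proves that $B_2:\mathcal{M}_{\chi^\kappa}\to\mathcal{M}_{\chi^{\kappa'}}$ is bounded with norm $\leq e^{\kappa'}\langle b\rangle/\bigl(e(\kappa-\kappa')\bigr)$, and then builds an explicit Dyson--Phillips type operator $Q_{\kappa'\kappa}(t)$ through a finite scale $\{\mathcal{M}_{\chi^{\kappa_l}}\}$ whose convergence radius is exactly $T(\kappa,\kappa')$. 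Your formal computation with $\kappa(t)=\kappa-\langle b\rangle e^{\kappa}t$ recovers the same threshold, but the step $\frac{d}{dt}\mu_t(F_t)=\mu_t(LF_t)+\mu_t(\partial_t F_t)$ is circular: it presupposes $\int |\eta|e^{\kappa(t)|\eta|}\mu_t(d\eta)<\infty$, which is precisely what you want to prove, and nothing in part (a) supplies exponential moments. To repair this you would have to run the argument on truncated test functions $F_t^N(\eta)=e^{\kappa(t)\min(|\eta|,N)}$ and control the boundary terms uniformly in $N$; the paper's series construction sidesteps this entirely by building the solution directly inside $\mathcal{M}_{\chi^{\kappa'}}$.

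For (c), the paper runs the whole Thieme--Voigt machinery in parallel on $\mathcal{R}=L^1(\Gamma_0,d\lambda)$ with the density operator $L^\dagger$ of (\ref{L}), obtains a stochastic semigroup $S_{\mathcal{R}}$ there, and then observes that $R_t\lambda$ solves (\ref{4}) whenever $R_t=S_{\mathcal{R}}(t)R_0$, hence coincides with $\mu_t$ by uniqueness. Your direct argument that each Dyson--Phillips term preserves $\lambda$-absolute continuity is correct in spirit; the paper's parallel-semigroup route is cleaner and, importantly, yields $R_t$ as a genuine semigroup orbit in $\mathcal{R}_{\chi_m}$ and $\mathcal{R}_{\chi^\kappa}$, a fact the paper exploits later in Section~\ref{Sec5}.
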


\subsection{The proof}
To prove Theorem \ref{1ftm}, as well as to elaborate tools for
studying the evolution of infinite systems, we use the Thieme-Voigt
perturbation technique \cite{TV}, the basic elements of which we
present here in the form adapted to the context.

To prove claim (c) along with the space $\mathcal{M}$ we will
consider its subspace consisting of measures absolutely continuous
with respect to the Lebesgue-Poisson measure defined in (\ref{8}).
This is $\mathcal{R}:=L^1 (\Gamma_0 , d \lambda)$ in which we have a
similar functional $\varphi_{\mathcal{R}}(R) = \int_{\Gamma_0}
R(\eta) \lambda (d\eta)$. Then we define $\mathcal{R}^{+}$ and
$\mathcal{R}^{+}_1$ consisting of positive elements and probability
densities, respectively. Note that $\varphi_{\mathcal{R}}(R) =
\|R\|_{\mathcal{R}}$ for $R\in \mathcal{R}^{+}$ and hence
$\mathcal{R}$ is also and $AL$-space. For $\chi: \mathds{N}_0 \to
[0,+\infty)$ as in (\ref{18a}), we set
\begin{eqnarray}
  \label{18c}
& & \mathcal{R}_\chi  = \left\{ R\in \mathcal{R}: \int_{\Gamma_0}
\chi(|\eta|) |R(\eta)|\lambda (d \eta ) < \infty\right\}, \\[.2cm]
\nonumber & & \varphi_{\mathcal{R}_\chi}(R)  =  \int_{\Gamma_0}
\chi(|\eta|) R(\eta)\lambda (d \eta ), \qquad R\in \mathcal{R}_\chi,
\\[.2cm]
\nonumber & & \mathcal{R}_\chi^{+}  =   \mathcal{R}_\chi \cap
\mathcal{R}^{+}, \qquad \mathcal{R}_{\chi,1}^{+} = \{ R\in
\mathcal{R}_\chi^{+}: \varphi_{\mathcal{R}}(R) = 1\}.
\end{eqnarray}
Now let $\mathcal{E}$ be either $\mathcal{M}$ or $\mathcal{R}$, and
$\|\cdot \|_{\mathcal{E}}$ stand for the corresponding norm. The
sets $\mathcal{E}^{+}$, $\mathcal{E}_1^{+}$, $\mathcal{E}_\chi$,
$\mathcal{E}_\chi^{+}$, $\mathcal{E}_{\chi,1}^{+}$, and the
functionals $\varphi_{\mathcal{E}}$, $\varphi_{\mathcal{E}_\chi}$
are defined analogously, i.e., they should coincide with the
corresponding objects introduced above if  $\mathcal{E}$ is replaced
by $\mathcal{M}$ or $\mathcal{R}$ (by $\mathcal{M}_{1}^{+}$ we then
understand $\mathcal{P}(\Gamma_0)$). Let $\mathcal{D}\subset
\mathcal{E}$ be a linear subspace, $\mathcal{D}^{+} =\mathcal{D}\cap
\mathcal{E}^{+}$ and $(A,\mathcal{D})$, $(B,\mathcal{D})$ be
operators on $\mathcal{E}$. Set also $\mathcal{D}_\chi =\{ u\in
\mathcal{D}\cap \mathcal{E}_\chi: A u \in \mathcal{E}_\chi\}$ and
denote by $A_\chi$ the {\it trace} of $A$ in $\mathcal{E}_\chi$,
i.e., the restriction of $A$ to $\mathcal{D}_\chi$. Recall that a
$C_0$-semigroup of bounded linear operators $S=\{S(t)\}_{t\geq 0}$
in $\mathcal{E}$ is called \emph{positive} if
$S(t):\mathcal{E}^{+}\to \mathcal{E}^{+}$ for each $t\geq 0$. A
\emph{sub-stochastic} (resp. \emph{stochastic}) semigroup in
$\mathcal{E}$ is a positive $C_0$-semigroup such that
$\varphi_{\mathcal{E}} (S(t)u) \leq \varphi_{\mathcal{E}} (u)$
(resp. $\varphi_{\mathcal{E}} (S(t)u) = \varphi_{\mathcal{E}} (u)$)
whenever $u\in \mathcal{E}^{+}$.
\begin{proposition}\cite[Proposition 2.2]{TV}
  \label{TV0pn}
Let $(A,\mathcal{D})$ be the generator of a positive $C_0$-semigroup
in $\mathcal{E}$, and $(B,\mathcal{D})$ be positive, i.e.,
$B:\mathcal{D}^{+}\to \mathcal{E}^{+}$. Suppose also that
\begin{equation}
  \label{40}
\forall u\in\mathcal{D}^{+} \qquad  \varphi_{\mathcal{E}}((A+B)u)
\leq 0.
\end{equation}
Then, for each $r\in (0,1)$, the operator $(A+rB, \mathcal{D})$ is
the generator of a sub-stochastic semigroup in $\mathcal{E}$.
\end{proposition}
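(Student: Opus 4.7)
The approach is to realise $A+rB$ as a Miyadera-type perturbation of $A$, with perturbation constant strictly below $1$ when $r<1$, and to read off positivity and sub-stochasticity from the resulting Dyson series using the dissipativity hypothesis.

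First, since $B:\mathcal{D}^{+}\to\mathcal{E}^{+}$, condition (\ref{40}) gives $\varphi_{\mathcal{E}}(Au)\le -\varphi_{\mathcal{E}}(Bu)\le 0$ for every $u\in\mathcal{D}^{+}$. Let $\{S_A(t)\}_{t\ge 0}$ denote the positive $C_0$-semigroup generated by $A$; for $u\in\mathcal{D}^{+}$ the orbit stays in $\mathcal{D}^{+}$, and integrating along it yields
$$\int_0^t \varphi_{\mathcal{E}}\bigl(B\,S_A(s)u\bigr)\,ds \;\le\; \varphi_{\mathcal{E}}(u)-\varphi_{\mathcal{E}}(S_A(t)u)\;\le\;\varphi_{\mathcal{E}}(u).$$
Because $\mathcal{E}$ is an $AL$-space and $BS_A(s)u\ge 0$, the integrand equals $\|BS_A(s)u\|_{\mathcal{E}}$, which is exactly the Miyadera condition for the perturbation $rB$ with constant $r<1$. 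I would then introduce the Dyson series $S_r(t)=\sum_{n\ge 0}S_n(t)$ with $S_0=S_A$ and $S_{n+1}(t)u=\int_0^t S_A(t-s)\,rB\,S_n(s)u\,ds$. Iterating the above bound and exploiting additivity of $\|\cdot\|_{\mathcal{E}}$ on the cone produces $\|S_n(t)u\|_{\mathcal{E}}\le r^n\|u\|_{\mathcal{E}}$ for $u\in\mathcal{D}^{+}$, and passage to the Hahn-Jordan decomposition extends this to all of $\mathcal{E}$, giving absolute convergence in operator norm. Each $S_n(t)$ is positive by induction from positivity of $S_A$ and $B$, so the limit is a positive $C_0$-semigroup; a standard variation-of-constants argument identifies its generator as $(A+rB,\mathcal{D})$.

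For sub-stochasticity, take $u\in\mathcal{D}^{+}$ and write $(A+rB)v=(A+B)v-(1-r)Bv$ at $v=S_r(t)u\ge 0$. The hypothesis together with positivity of $B$ gives $\varphi_{\mathcal{E}}((A+rB)v)\le -(1-r)\varphi_{\mathcal{E}}(Bv)\le 0$, so $t\mapsto\varphi_{\mathcal{E}}(S_r(t)u)$ is non-increasing and $\varphi_{\mathcal{E}}(S_r(t)u)\le\varphi_{\mathcal{E}}(u)$. The delicate point is the borderline Miyadera constant: the dissipativity estimate saturates at $1$, so only for $r<1$ does one gain the geometric factor $r^n$ needed for convergence of the Dyson series — this is precisely why the full case $r=1$ is excluded from the statement.
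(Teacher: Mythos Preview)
The paper does not prove this proposition at all: it is quoted from \cite[Proposition 2.2]{TV} and used as a black box throughout. So there is no proof in the paper to compare against.

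That said, your sketch is the standard Miyadera--Voigt argument and is essentially the one given in \cite{TV}. Two points worth tightening. First, you silently use that $S_A$ is sub-stochastic when you bound $\varphi_{\mathcal{E}}(S_A(t-s)\,rB\,S_n(s)u)$ in the iteration; this is not assumed, only positivity is. It does follow from your own observation $\varphi_{\mathcal{E}}(Au)\le 0$ on $\mathcal{D}^{+}$ via $\frac{d}{dt}\varphi_{\mathcal{E}}(S_A(t)u)=\varphi_{\mathcal{E}}(A S_A(t)u)\le 0$, but you should say so explicitly. Second, for $n\ge 2$ the Dyson iterate $S_n(t)u=\int_0^t S_A(t-s)\,rB\,S_{n-1}(s)u\,ds$ needs $S_{n-1}(s)u\in\mathcal{D}$ for $B$ to apply; in general $S_{n-1}(s)u$ is only in $\mathcal{E}$. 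The usual fix is to first extend $u\mapsto\int_0^t BS_A(s)u\,ds$ to a bounded operator on $\mathcal{E}$ using your Miyadera bound and the additivity of the norm on the cone, and then iterate at the level of these extended Volterra operators rather than pointwise. With those two clarifications your argument goes through.
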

\begin{proposition}\cite[Proposition 2.7]{TV}
  \label{TVpn}
Assume that:
\begin{itemize}
  \item[(i)] $-A:\mathcal{D}^{+} \to \mathcal{E}^{+}$ and $B:\mathcal{D}^{+}
\to \mathcal{E}^{+}$;
\item[(ii)] $(A,\mathcal{D})$ be the generator of a
sub-stochastic semigroup $S=\{S(t)\}_{t\geq 0}$ on $\mathcal{E}$
such that $S (t):\mathcal{E}_\chi \to \mathcal{E}_\chi$ for all
$t\geq 0$ and the restrictions $S (t)|_{\mathcal{E}_\chi}$
constitute a $C_0$-semigroup on $\mathcal{E}_{\chi}$ generated by
$(A_\chi, \mathcal{D}_\chi)$;
\item[(iii)] $B:\mathcal{D}_\chi \to \mathcal{E}_\chi$ and
$ \varphi_{\mathcal{E}} \left( (A+B) u\right) = 0$,  for $u\in
\mathcal{D}^{+}$;
\item[(iv)] there exist $c>0$ and $\varepsilon >0$ such that
\[
\varphi_{\mathcal{E}_\chi} \left( (A+B) u\right) \leq c
\varphi_{\mathcal{E}_\chi} (u) - \varepsilon \|A u\|_{\mathcal{E}},
\qquad {\rm for}  \ \ u\in \mathcal{D}_\chi \cap \mathcal{E}^{+}.
\]
\end{itemize}
Then the closure of $(A+B,\mathcal{D})$ in $\mathcal{E}$ is the
generator of a stochastic semigroup $S_{\mathcal{E}}=
\{S_{\mathcal{E}}(t)\}_{t\geq 0}$ on $\mathcal{E}$ which leaves
$\mathcal{E}_\chi$ invariant. The restrictions
$S_{\mathcal{E}_\chi}(t):=S_{\mathcal{E}}(t)|_{\mathcal{E}_\chi}$,
$t\geq 0$, constitute a $C_0$-semigroup $S_{\mathcal{E}_\chi}$ on
$\mathcal{E}_\chi$ generated by the trace of the generator of
$S_{\mathcal{E}}$ in $\mathcal{E}_\chi$.
\end{proposition}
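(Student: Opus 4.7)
The plan is to follow the standard Kato–Voigt perturbation strategy in $AL$-spaces. For each $r\in(0,1)$, Proposition \ref{TV0pn} already provides a sub-stochastic $C_0$-semigroup $S_r$ generated by $(A+rB,\mathcal{D})$. By positivity, the resolvent of $A+rB$ admits the Neumann-series representation
\begin{equation*}
(\lambda-A-rB)^{-1}=\sum_{n=0}^{\infty} r^n (\lambda-A)^{-1}\bigl[B(\lambda-A)^{-1}\bigr]^{n},
\end{equation*}
for $\lambda>0$, with each summand mapping $\mathcal{E}^{+}$ into itself. Since this series is monotone nondecreasing in $r$, letting $r\uparrow 1$ I would obtain a well-defined positive operator $R(\lambda)$ on $\mathcal{E}$ that inherits the resolvent equation, hence a pseudo-resolvent. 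Applying $\varphi_{\mathcal{E}}$ and using the conservativity in (iii) shows that $\lambda R(\lambda)$ is an isometry on $\mathcal{E}^{+}$, so $R(\lambda)$ has dense range; therefore $R(\lambda)$ is the resolvent of a densely defined closed operator which one identifies with the closure of $(A+B,\mathcal{D})$. The Hille–Yosida theorem then delivers the stochastic semigroup $S_{\mathcal{E}}$.

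Second, to get invariance of $\mathcal{E}_\chi$, I would combine assumption (iii) with the Lyapunov inequality (iv). Working with Yosida approximants $u_{\lambda}=\lambda R(\lambda)u$ for $u\in \mathcal{E}_\chi^{+}$, the estimate
\begin{equation*}
\varphi_{\mathcal{E}_\chi}\bigl((A+B)u\bigr)\le c\,\varphi_{\mathcal{E}_\chi}(u)-\varepsilon\|Au\|_{\mathcal{E}}
\end{equation*}
yields, after integrating along the trajectories of $S_r$ and passing $r\uparrow 1$ with the help of Fatou's lemma (legitimate on the $AL$-space cone), a Gronwall-type bound $\varphi_{\mathcal{E}_\chi}(S_{\mathcal{E}}(t)u)\le e^{ct}\varphi_{\mathcal{E}_\chi}(u)$. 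This shows that $S_{\mathcal{E}}(t)$ leaves $\mathcal{E}_\chi$ invariant as a bounded operator of norm at most $e^{ct}$ on $\mathcal{E}_\chi^{+}$, and hence on $\mathcal{E}_\chi$ by the $AL$-structure.

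Third, to promote the family $S_{\mathcal{E}_\chi}(t):=S_{\mathcal{E}}(t)|_{\mathcal{E}_\chi}$ to a $C_0$-semigroup on $\mathcal{E}_\chi$, the crucial point is strong continuity at $t=0$ in the $\mathcal{E}_\chi$-norm. The dissipative term $-\varepsilon\|A u\|_{\mathcal{E}}$ in (iv), integrated along trajectories, yields a uniform-in-$t$ bound on $\int_0^t \|A S_{\mathcal{E}}(s)u\|_{\mathcal{E}}\,ds$, which combined with $\|B u\|_{\mathcal{E}_\chi}\le \mathrm{const}\cdot (\varphi_{\mathcal{E}_\chi}(u)+\|Au\|_{\mathcal{E}})$ (a consequence of (iii)–(iv)) gives precisely the control needed to estimate $\varphi_{\mathcal{E}_\chi}(S_{\mathcal{E}}(t)u-u)$ for $u\in\mathcal{D}_\chi$; since $\mathcal{D}_\chi$ is a core for the trace operator $A_\chi$ in $\mathcal{E}_\chi$, strong continuity extends to all of $\mathcal{E}_\chi$. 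The identification of the generator of $S_{\mathcal{E}_\chi}$ with the trace of the generator of $S_{\mathcal{E}}$ then follows routinely by comparing their resolvents on $\mathcal{E}_\chi$.

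The main obstacle I anticipate is the identification of $R(\lambda)$ with the resolvent of the \emph{closure} of $(A+B,\mathcal{D})$ rather than a proper extension — the honesty / non-explosion issue — which is exactly where the equality (as opposed to mere inequality) in assumption (iii) is indispensable: it is what rules out mass loss in the limit $r\uparrow 1$. Without the exact balance one would only obtain a sub-stochastic limit semigroup, possibly generated by a strict extension of $\overline{A+B}$. Condition (iv) is then a quantitative dissipativity tool that upgrades this semigroup to one well-behaved on the smaller weighted space $\mathcal{E}_\chi$ and pins down the trace $A_\chi$ as its generator there.
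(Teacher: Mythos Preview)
The paper does not prove this proposition; it is quoted verbatim from \cite[Proposition 2.7]{TV} and used as a black box in the proof of Theorem~\ref{1ftm}. There is therefore no ``paper's own proof'' to compare your attempt against. Your sketch is a reasonable outline of the Kato--Voigt/Thieme--Voigt perturbation argument that underlies the cited result, but be aware that in the original reference the crucial step is not the identification of the limiting pseudo-resolvent with the closure of $A+B$ via the mass conservation in (iii) alone --- that would only give a sub-stochastic semigroup generated by \emph{some} extension --- but rather the Lyapunov estimate (iv), which is used to show that the minimal semigroup is honest (no mass escapes to infinity) and simultaneously to obtain the $\mathcal{E}_\chi$-invariance. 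Your proposal reverses the logical order slightly: you first invoke (iii) to get stochasticity and then (iv) for invariance, whereas in the Thieme--Voigt scheme (iv) is what forces the extension to coincide with $\overline{A+B}$ and to be stochastic. This is the ``honesty'' subtlety you flag at the end, and it is indeed the heart of the matter.
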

{\it Proof of Theorem \ref{1ftm}.} Along with $L^*=A+B$ defined in
(\ref{22a}) and (\ref{22b}) we consider the operator in
$\mathcal{R}$ defined according to the rule $ (L^* \mu) (d\eta) =
(L^\dagger R_\mu) (\eta) \lambda ( d\eta)$. Then $L^\dagger =
A^\dagger + B^\dagger$ with
\begin{eqnarray}
  \label{L}
(A^\dagger R)(\eta) & = & - \Psi (\eta) R(\eta), \\[.2cm] \nonumber
(B^\dagger R)(\eta) & = & \int_{\mathds{R}^d} \left(m(x) +
E^a(x,\eta) \right) R(\eta \cup x) d x \\[.2cm] \nonumber & + &
\int_{\mathds{R}^d}  \sum_{y_1\in \eta}\sum_{y_2 \in \eta \setminus
y_1} b(x|y_1 , y_2) R(\eta \cup x \setminus \{y_1 , y_2\}) d x,
\end{eqnarray}
the domain of which is, cf. (\ref{22a}),
\begin{equation}
  \label{L2}
\mathcal{D}^\dagger = \left\{ R \in \mathcal{R}: \int_{\Gamma_0}
\Psi(\eta) |R(\eta)| \lambda ( d \eta) < \infty \right\}.
\end{equation}
For $R\in \mathcal{D}^\dagger \cap \mathcal{R}^{+}$, by (\ref{18})
and (\ref{22}) we obtain from (\ref{L})
\begin{eqnarray}
  \label{L4}
\varphi_{\mathcal{R}} ( B^\dagger R) & = & \int_{\Gamma_0}
\left(\sum_{x\in \eta} [m(x) + E^a(x,\eta \setminus x)]  \right)
R(\eta)\lambda (d \eta) \\[.2cm] \nonumber & + & \int_{\Gamma_0}
\left(\sum_{x\in \eta} \int_{(\mathds{R}^d)^2} b(x|y_1 , y_2) d y_1
d y_2 \right)
R(\eta)\lambda (d \eta) \\[.2cm] \nonumber & = & \int_{\Gamma_0}
\Psi (\eta) R(\eta)\lambda (d \eta) = - \varphi_{\mathcal{R}}(
A^\dagger R).
\end{eqnarray}
By (\ref{L2}) and (\ref{L4}) we then get that: (a) $B^\dagger :
\mathcal{D}^\dagger \to \mathcal{R}$ and $B^\dagger :
\mathcal{R}^{+} \cap \mathcal{D}^\dagger \to \mathcal{R}^{+}$; (b)
$\varphi_{\mathcal{R}} ((A^\dagger + B^\dagger)R) =0$ for each $R\in
\mathcal{R}^{+} \cap \mathcal{D}^\dagger$. In the same way, we prove
that the operators defined in (\ref{22a}) and (\ref{22o}) satisfy:
(a) $B : \mathcal{D} \to \mathcal{M}$ and $B : \mathcal{D}^{+} \to
\mathcal{M}^{+}$; (b) $\varphi_{\mathcal{M}} ((A + B)\mu) =0$ for
each $\mu \in \mathcal{D}^{+}$. Thus, both pairs $(A, \mathcal{D})$,
$(B, \mathcal{D})$ and $(A^\dagger, \mathcal{D}^\dagger)$,
$(B^\dagger, \mathcal{D}^\dagger)$ satisfy item (i) of Proposition
\ref{TVpn}. We proceed further by setting
\begin{eqnarray}
  \label{L3}
(S(t) \mu) (d \eta) & = & \exp\left(- t \Psi(\eta)\right) \mu(
d\eta), \quad \mu \in \mathcal{M}, \quad t>0, \\[.2cm] \nonumber (S^\dagger(t) R) (\eta)& = & \exp\left(- t \Psi(\eta)\right)
R( \eta), \quad R\in \mathcal{R}.
\end{eqnarray}
Obviously, $S=\{S(t)\}_{t\geq 0}$ and
$S^\dagger=\{S^\dagger(t)\}_{t\geq 0}$ are sub-stochastic semigroups
on $\mathcal{M}$ and $\mathcal{R}$, respectively. They are generated
respectively by $(A, \mathcal{D})$ and $(A^\dagger,
\mathcal{D}^\dagger)$. Clearly, the restrictions
$S(t)|_{\mathcal{M}_\chi}$ and $S^\dagger(t)|_{\mathcal{R}_\chi}$
constitute positive $C_0$-semigroups for $\chi_m$ and $\chi^\kappa$
as in Theorem \ref{1ftm}. Likewise, $B:\mathcal{D}_\chi\to
\mathcal{M}_\chi$ and $B^\dagger:\mathcal{D}^\dagger_\chi\to
\mathcal{R}_\chi$. Thus, the conditions in items (ii) and (iii) of
Proposition \ref{TVpn} are satisfied in both cases.

Now we turn to item (iv) of Proposition \ref{TVpn}. By (\ref{18b})
we have
\begin{eqnarray*}
\varphi_{\mathcal{M}_\chi} ((A+B)\mu) & = &
\varphi_{\mathcal{M}_\chi}
(L^* \mu) = \int_{\Gamma_0} (LF_\chi)(\eta) \mu(d\eta), \quad F_\chi(\eta):= \chi(|\eta|), \qquad \\[.2cm]
\nonumber \varphi_{\mathcal{R}_\chi} ((A^\dagger+B^\dagger)R) & = &
\varphi_{\mathcal{R}_\chi} (L^\dagger R) = \int_{\Gamma_0} (L
F_\chi)(\eta) R(\eta) \lambda (d\eta).
\end{eqnarray*}
Then the condition in item (iv) is satisfied if, for some positive
$c$ and $\varepsilon$ and all $\eta$, the following holds
\begin{equation}
  \label{L5a}
  (L F_\chi)(\eta) + \varepsilon \Psi (\eta) \leq c \chi(|\eta|).
\end{equation}
For $\chi_m (n) = (1+n)^m$, $m\in \mathds{N}$, by (\ref{2}) we have,
cf. (\ref{22}),
\begin{eqnarray}
  \label{L6}
(L F_{\chi_m})(\eta) & = & - \left( M(\eta) + E^a (\eta)\right)
\epsilon_m (|\eta|) + \langle b \rangle |\eta| \epsilon_{m}
(|\eta|+1), \\[.2cm] \nonumber \epsilon_m (n) & := & (n+1)^m - n^m =
(n+1)^{m-1} + (n+1)^{m-2}n + \cdots + n^{m-1} \\[.2cm] \nonumber &
\leq & m(n+1)^{m-1}.
\end{eqnarray}
For $\chi^\kappa(n) = e^{\kappa n}$, we have
\begin{eqnarray*}
(L F_{\chi^\kappa})(\eta)  =  - \left( M(\eta) + E^a (\eta)\right)
e^{\kappa |\eta|} (1- e^{-1}) + \langle b \rangle |\eta| e^{\kappa
|\eta|} (e-1).
\end{eqnarray*}
By (\ref{L6}) the condition in (\ref{L5a}) takes the form
\begin{equation}
  \label{L8}
 - \left( M(\eta) + E^a (\eta)\right)\left(
\epsilon_m (|\eta|) -\varepsilon\right) + \langle b \rangle |\eta|
\left(\epsilon_{m} (|\eta|+1) + \varepsilon\right) \leq c
\left(|\eta|+1 \right)^m.
\end{equation}
since $\epsilon_m (|\eta|) \geq 1$. For $\varepsilon < 1$, the
validity of (\ref{L8}) will follow whenever $c$ satisfies
\[
c \geq m \langle b \rangle \left( 2^{m-1} + 1\right).
\]
Hence, for $\chi=\chi_m$, all the conditions of Proposition
\ref{TVpn} are met for both choices of $\mathcal{E}$ and the
corresponding operators. Therefore, we have two semigroups:
$S_{\mathcal{M}}$ and $S_{\mathcal{R}}$, with the properties
described in the mentioned statement. Then $\mu_t = S_\mathcal{M}(t)
\mu_0$ is the unique solution of the Fokker-Planck equation with
$\mu_0 \in \mathcal{D}$, which proves claim (a) of Theorem
\ref{1tm}. At the same time, $R_t = S_{\mathcal{R}}(t) R_0(\eta )$
is the unique solution of
\begin{equation}
  \label{L9}
\dot{R}_t = L^\dagger R_t, \qquad R_t|_{t=0} = R_{\mu_0}\in
\mathcal{D}^\dagger.
\end{equation}
By (\ref{L2}) we have that $R_{\mu_0}\in \mathcal{D}^\dagger$ and
$\mu_0\in \mathcal{D}$ are equivalent. By direct inspection one
checks that $\mu_t(d \eta) = R_t (\eta) \lambda (d \eta)$ solves
(\ref{4}) if $R_t$ solves (\ref{L9}). Then the unique solution
$\mu_t = S_{\mathcal{M}}(t) \mu_0$  of (\ref{4}) has the mentioned
form, which proves claim (c).

To complete the proof we fix $\kappa >0$ and consider the trace of
$A$ in $\mathcal{M}_{\chi^\kappa}$, cf. (\ref{22o}), defined on the
domain
\begin{equation*}
\mathcal{D}_{\kappa}:=\left\{ \mu \in \mathcal{M}_{\chi^\kappa}:
\int_{\Gamma_0} \Psi (\eta) e^{\kappa |\eta|}\mu^{\pm }(d\eta ) <
\infty\right\}.
\end{equation*}
First, we split $B$ into the sum $B_1 + B_2$, where for $\mathbb{A}
\in
 \mathcal{B}( \Gamma_0)$ we set, cf.
(\ref{22b}),
\begin{equation}
  \label{L11}
 (B_1 \mu)(\mathbb{A}) = \int_{\Gamma_0} \left( \sum_{x\in \eta}[m(x) + E^a(x, \eta \setminus
 x)] \mathds{1}_{\mathbb{A}} (\eta\setminus x)\right) \mu(d\eta) ,
\end{equation}
and
\begin{equation}
  \label{L12}
(B_2 \mu)(\mathbb{A}) = \int_{\Gamma_0} \left(\sum_{x\in \eta}
\int_{(\mathds{R}^d)^2} b(x|y_1 , y_2) \mathds{1}_{\mathbb{A}} (\eta
\setminus x \cup\{y_1 , y_2\}) d y_1  d y_ 2\right) \mu( d\eta).
\end{equation}
For $\mu\in \mathcal{D}_\kappa^{+} :=\mathcal{D}_\kappa \cap
\mathcal{M}^{+}$,  from (\ref{L11}) we have
\begin{eqnarray}
  \label{L13}
\varphi_{\mathcal{M}_{\chi^\kappa}} (B_1 \mu) & = & \int_{\Gamma_0}
 e^{\kappa |\xi|} \int_{\Gamma_0} \sum_{x\in \eta}[m(x) +
E^a(x, \eta \setminus
 x)]\delta_{\eta\setminus x}( d \xi) \mu(d\eta) \\[.2cm] \nonumber
 & = & \int_{\Gamma_0} e^{\kappa (|\eta|-1)} \left(M(\eta) +
E^a(\eta) \right) \mu(d\eta) \\[.2cm] \nonumber
 & \leq & - e^{-\kappa} \varphi_{\mathcal{M}_{\chi^\kappa}} ( A \mu)
.
\end{eqnarray}
For $r = e^{-\kappa}$, by (\ref{L13}) we have that
$\varphi_{\mathcal{M}_{\chi^\kappa}} (A+ r^{-1} B_1 \mu)\leq 0$ for
each $\mu \in \mathcal{D}_\kappa^{+}$. Then by Proposition
\ref{TV0pn} we obtain that $(A+ B_1, \mathcal{D}_\kappa)$ generates
a sub-stochastic semigroup $S_\kappa$ on
$\mathcal{M}_{\chi^\kappa}$. For $\kappa'\in (0,\kappa)$, let us
show now that $B_2$ acts as a bounded linear operator from
$\mathcal{M}_{\chi^\kappa}$ to $\mathcal{M}_{\chi^{\kappa'}}$. In
view of the Hahn-Jordan decomposition, it is enough to consider the
action of $B_2$ on positive elements of $\mathcal{M}_{\chi^\kappa}$.
Since $B_2$ is positive, cf. (\ref{L12}), for $\mu\in
\mathcal{M}^{+}_{\chi^\kappa}$, we have
\begin{eqnarray}
  \label{L14}
\|B_2 \mu\|_{\mathcal{M}_{\chi^{\kappa'}}} & = & \int_{\Gamma_0}
e^{\kappa'|\xi|} \int_{\Gamma_0} \sum_{x\in
\eta}\int_{(\mathds{R}^d)^2} b(x|y_1, y_2)  \delta_{\eta\setminus x
\cup\{y_1 , y_2\}} (d \xi) dy_1 dy_2 \mu ( d\eta) \qquad \\[.2cm]
\nonumber & = & e^{\kappa'} \int_{\Gamma_0} e^{\kappa'|\eta|}
\sum_{x\in \eta}\int_{(\mathds{R}^d)^2} b(x|y_1, y_2)
 dy_1 dy_2 \mu (
d\eta) \\[.2cm]
\nonumber & = & e^{\kappa'} \langle b \rangle \int_{\Gamma_0} |\eta|
e^{- (\kappa-\kappa')|\eta|} e^{\kappa|\eta|} \mu ( d\eta) \\[.2cm]
\nonumber & \leq & \frac{e^{\kappa'} \langle b \rangle}{e (\kappa -
\kappa')} \|\mu\|_{\mathcal{M}_{\chi^{\kappa}}}.
\end{eqnarray}
Let $(B_2)_{\kappa'\kappa}: \mathcal{M}^{+}_{\chi^\kappa} \to
\mathcal{M}^{+}_{\chi^{\kappa'}}$ be the operator as just described.
For $n\in \mathds{N}$, we set
\begin{equation}
  \label{L15}
\kappa_l = \kappa - (\kappa - \kappa')l /n, \qquad l=0, 1, \dots ,
n.
\end{equation}
By means of (\ref{L14}) and (\ref{L15}) we then estimate of the
operator norm
\begin{equation}
  \label{L16}
\|(B_2)_{\kappa_{l+1}\kappa_l}\| \leq \frac{ e^{\kappa} n \langle b
\rangle}{e (\kappa - \kappa')}.
\end{equation}
Next, for $t>0$ and $0\leq t_n \leq \cdots \leq t_0 = t$, we
consider the following bounded linear operator acting from
$\mathcal{M}_{\chi^\kappa}$ to $\mathcal{M}_{\chi^{\kappa'}}$
\begin{equation*}
T_{\kappa' \kappa}^{(n)} (t,t_1, t_2 , \dots , t_n) =
S_{\kappa_n}(t-t_1) (B_2)_{\kappa_n \kappa_{n-1}}
S_{\kappa_{n-1}}(t_1-t_2) \cdots (B_2)_{\kappa_1 \kappa}
S_{\kappa}(t_n),
\end{equation*}
where $S_{\kappa_{l}}$ is the sub-stochastic semigroup in
$\mathcal{M}_{\chi^{\kappa_l}}$ generated by $(A+B_1,
\mathcal{D}_{\kappa_l})$. By the latter fact we have that
$T_{\kappa' \kappa}^{(n)} (t,t_1, t_2 , \dots , t_n):
\mathcal{M}_{\chi^\kappa}\to \mathcal{D}_{\kappa'}$ and
\begin{eqnarray}
  \label{L17a}
 \frac{d}{dt} T_{\kappa' \kappa}^{(n)} (t,t_1, t_2 , \dots , t_n) & =
&  (A+ B_1)T_{\kappa' \kappa}^{(n)} (t,t_1, t_2 , \dots ,
t_n),\\[.2cm] \nonumber T_{\kappa' \kappa}^{(n)} (t,t, t_2 , \dots ,
t_n) & = & (B_2)_{\kappa' \kappa_{n-1}} T_{\kappa_{n-1}
\kappa}^{(n-1)} (t, t_2 , \dots , t_n).
\end{eqnarray}
As $(B_2)_{\kappa' \kappa_{n-1}}$ is the restriction of $(B_2,
\mathcal{D}_{\kappa'})$ to $\mathcal{M}_{\chi^{\kappa_{n-1}}}
\subset \mathcal{D}_{\kappa'}$ and $T_{\kappa' \kappa}^{(n-1)}
(t,t_2, t_2 , \dots , t_n): \mathcal{M}_{\chi^\kappa}\to
\mathcal{D}_{\kappa'}$, the second line in (\ref{L17a}) can be
rewritten as
\begin{equation}
  \label{L17b}
T_{\kappa' \kappa}^{(n)} (t,t, t_2 , \dots , t_n)  = B_2 T_{\kappa'
\kappa}^{(n-1)} (t, t_2 , \dots , t_n).
\end{equation}
On the other hand, since all the semigroups $S_{\kappa_{l}}$ are
sub-stochastic and $(B_2)_{\kappa' \kappa}$ are positive, by
(\ref{L16}) we get the following estimate of its operator norm
\begin{equation}
  \label{L18}
\|T^{(n)}_{\kappa' \kappa} (t,t_1, t_2 , \dots , t_n)\| \leq \left(
\frac{ e^{\kappa} n \langle b \rangle}{e (\kappa -
\kappa')}\right)^n.
\end{equation}
We also set $T^{(0)}_{\kappa' \kappa}(t)= S_{\kappa'}
(t)|_{\mathcal{M}_{\chi^\kappa}}$, and then consider
\begin{equation}
  \label{L19}
Q_{\kappa'\kappa}(t) := \sum_{n=0}^\infty \int_0^t\int_0^{t_1}
\cdots \int_0^{t_{n-1}} T^{(n)}_{\kappa'\kappa} (t,t_1, t_2 , \dots
, t_n) d t_n d t_{n-1} \cdots d t_1.
\end{equation}
By (\ref{L18}) we conclude that the series in (\ref{L19}) converges
uniformly on compact subsets of $[0, T(\kappa, \kappa'))$, see
(\ref{22T}), to a continuously differentiable function
\[
(0,T(\kappa, \kappa')) \ni t \mapsto Q_{\kappa'\kappa}(t) \in
\mathcal{L}(\mathcal{M}_{\chi^\kappa},
\mathcal{M}_{\chi^{\kappa'}}),
\]
where the latter is the Banach space of all bounded linear operators
acting from $\mathcal{M}_{\chi^\kappa}$ to
$\mathcal{M}_{\chi^{\kappa'}}$. By (\ref{L17a}) and (\ref{L17b}) we
obtain
\begin{equation}
  \label{L20}
\frac{d}{dt} Q_{\kappa'\kappa}(t) = (A + B_1 + B_2)
Q_{\kappa'\kappa}(t) = L^* Q_{\kappa'\kappa}(t).
\end{equation}
Thus, assuming that $\mu_0 \in \mathcal{M}_{\chi^\kappa}$ we get
that $\tilde{\mu}_t := Q_{\kappa'\kappa}(t) \mu_0$, for $t \in
[0,T(\kappa, \kappa'))$, lies in $\mathcal{M}_{\chi^{\kappa'}}$ and
solves (\ref{4}). Therefore, $\tilde{\mu}_t$ coincides with $\mu_t =
S_{\mathcal{M}}(t)\mu_0$, which completes the proof.
  {\hfill$\square$}

\section{The Evolution of States of the Infinite System: Posing}
\label{Sec3}
 In this section, we begin to construct the evolution of
states $\mu_0\to \mu_t$ assuming that the system in $\mu_0$ is
infinite and hence the method developed in Sect. 3 does not work
anymore. Instead, we will obtain $\mu_0\to \mu_t$ from the evolution
$B_0 \to B_t$, where $B_0(\theta)=\mu_0 (F^\theta)$ and $\mu_0\in
\mathcal{P}_{\rm exp} (\Gamma)$, see Definition \ref{0df}. In view
of (\ref{11}), the evolution $B_0 \to B_t$ can be constructed as the
evolution of correlation functions. The latter will be performed in
the following three steps: (a) constructing $k_0\to k_t$ for $t< T$
(for some $T<\infty$) (Sect. \ref{Sec4}); (b) proving that $k_t$ is
the correlation function of a unique $\mu_t \in \mathcal{P}_{\rm
exp}(\Gamma)$ (Sect. \ref{Sec5}); (c) continuing $k_t$ to all $t>0$
(Sect. \ref{Sec6}).

To make the first step, we derive from (\ref{1}) the corresponding
evolution equation with the operator $L^\Delta$ obtained from
(\ref{2}) by (\ref{17}), (\ref{18}) and the following rule
\begin{equation}
  \label{20}
\mu(L F^\theta) = \int_{\Gamma_0} (L^\Delta k_\mu)(\eta) e(\theta;
\eta) \lambda (d\eta).
\end{equation}
Then we prove that the equation $\dot{k}_t = L^\Delta k_t$ has a
unique solution $k_t$, $t<T$, in a scale of Banach spaces such that
$k_t^{(n)}$ satisfies (\ref{6c}) with $\varkappa$ dependent on $t$.
The restriction $t<T$ arises from the proof as no direct semigroup
method can be applied here. The proof just mentioned does not
guarantee that the solution $k_t$ is a correlation function, and
even its usual positivity is not certain. Step (b) is made by
constructing suitable approximations $k_t^{\rm app}$ to the
mentioned solution $k_t$. By this construction $k_t^{\rm app}$
satisfies condition (a) of Proposition \ref{1pn}. Then we prove
that, for all $G\in B_{\rm bs}(\Gamma_0)$,  $\langle \! \langle G,
k_t^{\rm app} \rangle \! \rangle$ converges to $\langle \! \langle
G, k_t \rangle \! \rangle$ as the approximations are eliminated.
This yields that also $k_t$ satisfies condition (a) of Proposition
\ref{1pn}. The remaining conditions (b) and (c) are checked
directly. Then $k_t = k_{\mu_t}$ for a unique $\mu_t\in
\mathcal{P}_{\rm exp}(\Gamma)$. This also implies the usual
positivity of $k_t$ which is then used to obtain the continuation to
all $t>0$.

\subsection{The operators} To make the first step mentioned above
we calculate $L^\Delta$ according to (\ref{20}) and obtain it in the
following form
\begin{eqnarray}
  \label{21}
L^\Delta & = & A_1^\Delta + A_2^\Delta + B_1^\Delta + B_2^\Delta, \\[.2cm] \nonumber
(A_1^\Delta k)(\eta) & = & - \Psi(\eta) k(\eta),\\[.2cm] \nonumber
( A_2^\Delta k)(\eta) & = & \int_{\mathds{R}^d} \sum_{y_1\in
\eta}\sum_{y_2\in \eta\setminus y_1} k(\eta \cup x \setminus \{y_1,
y_2\}) b(x|y_1 , y_2) d x,\\[.2cm] \nonumber
(B_1^\Delta k)(\eta) & = & - \int_{\mathds{R}^d} k(\eta \cup x)
E^a(x, \eta) d x,\\[.2cm] \nonumber
(B_2^\Delta k)(\eta) & = & 2 \int_{(\mathds{R}^d)^2} \sum_{y_1 \in
\eta}k(\eta \cup x \setminus y_1) b(x|y_1 , y_2) d y_2 d x,
\end{eqnarray}
where $\Psi$ is as in (\ref{22}). Since the correlation functions of
measures from $\mathcal{P}_{\rm exp}(\Gamma)$ satisfy (\ref{6c}), we
introduce
\begin{equation}
\label{nk} \|k \|_{\alpha} = \esssup_{\eta \in \Gamma_0}e^{-\alpha
|\eta|} |k(\eta)|, \qquad \alpha \in \mathds{R},
\end{equation}
and the corresponding $L^\infty$-like Banach spaces
\begin{equation}
  \label{23}
  \mathcal{K}_\alpha = \{k:\Gamma_0 \to \mathds{R}: \|k\|_\alpha
  <\infty\}.
\end{equation}
For $\alpha' < \alpha$, we have that $\| k\|_{\alpha'} \ge \|
k\|_{\alpha}$. Therefore, $\mathcal{K}_{\alpha'} \hookrightarrow
\mathcal{K}_{\alpha}$, where ``$\hookrightarrow$'' denotes
continuous embedding. Thus, $\{\mathcal{K}_\alpha\}_{\alpha \in
\mathds{R}}$ is an ascending scale of Banach spaces.

Our aim now is to define linear operators which act as in
(\ref{21}), cf. (\ref{22}). First, for a given $\alpha \in
\mathds{R}$, we define an unbounded operator $(L^\Delta_\alpha,
\mathcal{D}_\alpha^\Delta)$, where
\begin{equation}
 \label{24}
 \mathcal{D}_\alpha^\Delta = \{ k \in \mathcal{K}_\alpha: \Psi k \in  \mathcal{K}_\alpha\}.
\end{equation}
Thus, $A_1^\Delta$ maps $\mathcal{D}_\alpha^\Delta$ to
$\mathcal{K}_\alpha$. Furthermore, for each $k\in
\mathcal{D}_\alpha^\Delta$, one finds $C>0$ such that
$(1+\Psi(\eta)) |k(\eta)| \leq e^{\alpha |\eta|} C$. We apply this
fact and item (iv) of Assumption \ref{ass1} to get
\begin{gather*}
\left\vert(A^\Delta_2 k)(\eta) \right\vert \leq \frac{Ce^{-\alpha +
\alpha|\eta|}}{1 + \Psi(\eta)} \sum_{y_1\in \eta} \sum_{y_2 \in
\eta\setminus y_1} \beta (y_1 - y_2) \leq C\beta^* e^{-\alpha +
\alpha|\eta|},
\end{gather*}
which means that $A_2^\Delta:\mathcal{D}_\alpha^\Delta \to
\mathcal{K}_\alpha$. In a similar way, we prove that
$B_i^\Delta:\mathcal{D}_\alpha^\Delta \to \mathcal{K}_\alpha$,
$i=1,2$. Thus, the expression in (\ref{21}) defines
$(L^\Delta_\alpha, \mathcal{D}_\alpha^\Delta)$. By the inequality
\begin{equation}
 \label{25}
n^p e^{-\sigma n} \le \left( \frac{p}{e\sigma}\right)^p , \qquad
p\ge 1, \quad \sigma>0, \quad n\in \mathds{N},
\end{equation}
one readily proves that
\begin{equation}
 \label{26}
 \forall \alpha' < \alpha \qquad \mathcal{K}_{\alpha'} \subset \mathcal{D}^\Delta_\alpha.
\end{equation}
The next step is to introduce bounded operators $L_{\alpha
\alpha'}^{\Delta}: \mathcal{K}_{\alpha'} \to \mathcal{K}_\alpha$. To
this end, by means of (\ref{25}) and the inequality $|k(\eta) | \le
e^{\alpha |\eta|} \|k\|_{\alpha}$ (see (\ref{nk})), for $\alpha' <
\alpha$ we obtain from (\ref{21}) the following estimate
\begin{eqnarray}
  \label{27}
\|A_1^\Delta k\|_\alpha & \leq & \esssup_{\eta \in \Gamma_0}
e^{-\alpha
|\eta|}\Psi (\eta) |k(\eta)| \\[.2cm] \nonumber  & \leq &  \bigg{(}  ( m^* + \langle b \rangle +
a^*)\esssup_{\eta\in \Gamma_0}\left[|\eta|^2 e^{-(\alpha-
\alpha')|\eta|} \right] \bigg{)} \|k\|_{\alpha'} \\[.2cm] \nonumber  & =
& \frac{4 ( m^* + \langle b \rangle + a^*) }{e^2(\alpha-\alpha')^2}
\|k\|_{\alpha'}.
\end{eqnarray}
In a similar way, one estimates $\|A_2^\Delta k\|_\alpha$ and
$\|B_i^\Delta k\|_\alpha$, $i=1,2$, which then yields, cf.
(\ref{21}),
\begin{equation}
  \label{28}
\|L^\Delta k\|_\alpha \leq \left(4\frac{ m^* + \langle b \rangle +
a^* + \beta^* e^{-\alpha'} }{e^2(\alpha-\alpha')^2} + \frac{\langle
a \rangle e^{\alpha'} + 2 \langle b \rangle}{e(\alpha-\alpha')}
\right)\|k\|_{\alpha'}.
\end{equation}
Then we define a bounded operator $L^\Delta_{\alpha \alpha'}:
\mathcal{K}_{\alpha'} \to \mathcal{K}_\alpha$, the norm of which is
estimated by means of (\ref{28}). In view of (\ref{26}), we have
that each $k\in \mathcal{K}_{\alpha'}$ lies in
$\mathcal{D}^\Delta_\alpha$, and
\begin{equation}
  \label{29}
L^\Delta_{\alpha \alpha'} k = L^\Delta_{\alpha }k.
\end{equation}
In the sequel, we consider two types of operators with the action as
in (\ref{21}): (a) unbounded operators $(L^\Delta_\alpha,
\mathcal{D}(L^\Delta_\alpha))$, $\alpha\in \mathds{R}$, with the
domains as in (\ref{24}); (b) bounded operators $L^\Delta_{ \alpha
\alpha'}$ just described. These operators are related to each other
by (\ref{29}), i.e., $L^\Delta_{\alpha\alpha'}$ can be considered as
the restriction of $L^\Delta_{\alpha }$ to $\mathcal{K}_{\alpha'}$.

\subsection{The statements}

For $\alpha \in \mathds{R}$, we set, cf. (\ref{15}), (\ref{16}) and
Proposition \ref{1pn},
\begin{equation}
  \label{32}
\mathcal{K}^\star_\alpha = \{ k \in \mathcal{K}_\alpha:
k(\varnothing) =1 \ {\rm and} \  \langle \! \langle G, k \rangle \!
\rangle \geq 0 \  {\rm for}  \ {\rm all}  \ G\in B^\star_{\rm bs}
(\Gamma_0) \}.
\end{equation}
Note that
\begin{equation}
  \label{32a}
\mathcal{K}^\star_\alpha \subset \mathcal{K}_{\alpha}^{+} :=\{ k\in
\mathcal{K}_\alpha: k(\eta ) \geq 0\}.
\end{equation}
Since the spaces defined in (\ref{23}) form an ascending scale, we
have that $k\in \mathcal{K}_{\alpha_0}$ lies in all
$\mathcal{K}_\alpha$ with $\alpha>\alpha_0$. Recall that the model
parameters satisfy Assumption \ref{ass1} which, in particular, imply
the validity of Proposition \ref{2pn}.
\begin{theorem}
  \label{1tm}
There exists $c\in \mathds{R}$ dependent on the model parameters
only such that, for each $\mu_0\in \mathcal{P}_{\rm exp}(\Gamma_0)$,
there exists a unique map $[0,+\infty) \ni t \mapsto k_t \in
\mathcal{K}^{\star}_{\alpha_t}$ with $\alpha_t = \alpha_0 + ct$ and
$\alpha_0> - \log \omega$ such that  $k_0=k_{\mu_0}\in
\mathcal{K}^\star_{\alpha_0}$, which has the following properties:
\begin{itemize}
  \item[(i)]
For each $T>0$ and all $t\in [0,T)$, the map $$ [0,T)\ni t \mapsto
k_t \in \mathcal{K}_{\alpha_t}  \subset
 \mathcal{D}(L^\Delta_{\alpha_T}) \subset \mathcal{K}_{\alpha_T}$$
is continuous on $[0,T)$ and continuously differentiable on $(0,T)$
in $\mathcal{K}_{\alpha_T}$.
\item[(ii)] For all $t\in (0,T)$ it satisfies
\begin{equation*}
  \dot{k}_t = L^\Delta_{\alpha_T} k_t.
\end{equation*}
\end{itemize}
\end{theorem}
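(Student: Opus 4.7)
The plan follows the three-step scheme announced at the start of Section~\ref{Sec3} and spans the next three sections of the paper.

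First, I would construct $k_t$ locally in time in the scale $\{\mathcal{K}_\alpha\}$. Fix $T>0$, set $\alpha_t=\alpha_0+ct$ with $c>0$ to be chosen, and consider the mild formulation
\[
k_t=k_0+\int_0^t L^\Delta k_s\,ds
\]
in $\mathcal{K}_{\alpha_T}$. The bound (\ref{28}) shows that for $\alpha'<\alpha$ the operator $L^\Delta_{\alpha\alpha'}:\mathcal{K}_{\alpha'}\to\mathcal{K}_\alpha$ has norm at most a constant times $(\alpha-\alpha')^{-2}+(\alpha-\alpha')^{-1}$, which is singular on the diagonal but admissible once different scales are used for successive iterates. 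I would Picard-iterate and, at the $m$-th nested integral, evaluate $L^\Delta$ between the intermediate scales $\alpha_0+(m-1)(\alpha_T-\alpha_0)/n$ and $\alpha_0+m(\alpha_T-\alpha_0)/n$; a combinatorial estimate of the resulting multiple integral produces a series whose general term decays geometrically once $c$ is sufficiently large relative to the constants of (\ref{28}). This yields a unique map $t\mapsto k_t\in\mathcal{K}_{\alpha_t}$ that is continuous on $[0,T)$, continuously differentiable in $\mathcal{K}_{\alpha_T}$ on $(0,T)$, and satisfies $\dot k_t=L^\Delta_{\alpha_T}k_t$, establishing items (i) and (ii) locally.

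Second, I would upgrade $k_t$ into the subset $\mathcal{K}^\star_{\alpha_t}$. The Ovsyannikov solution is not a priori positivity-defining, so the strategy is to approximate $\mu_0$ by a sequence of finite-system initial states $\mu_0^{\rm app}\in\mathcal{P}(\Gamma_0)$ whose correlation functions $k_0^{\rm app}$ converge to $k_0$ in the scale, and apply Theorem~\ref{1ftm} to obtain evolutions $\mu_t^{\rm app}\in\mathcal{P}(\Gamma_0)$ with correlation functions $k_t^{\rm app}$ that automatically lie in $\mathcal{K}^\star$. The main task is then to show $\langle\!\langle G,k_t^{\rm app}\rangle\!\rangle\to\langle\!\langle G,k_t\rangle\!\rangle$ for every $G\in B_{\rm bs}(\Gamma_0)$ as the truncation is removed, which, combined with $k_t(\varnothing)=1$ and the scale bound $k_t(\eta)\leq e^{\alpha_t|\eta|}$, will allow Proposition~\ref{1pn} to conclude $k_t\in\mathcal{K}^\star_{\alpha_t}$. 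I expect this to be the principal technical obstacle: the finite-system generator $L^*$ of Theorem~\ref{1ftm} and the scale generator $L^\Delta$ act on different spaces, so the two Cauchy problems must be compared with uniform control in the truncation parameter while working only with the Ovsyannikov bound produced in Stage~1.

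Third, I would extend $k_t$ to all $t\geq 0$ by restart. Proposition~\ref{2pn} provides the fundamental coercivity: since $E^a(\eta)\geq \omega E^b(\eta)-\upsilon|\eta|$, the competition-plus-independent-death terms encoded in $A_1^\Delta$ and $B_1^\Delta$ dominate the fission-generating terms $A_2^\Delta$ and $B_2^\Delta$ up to a linear correction. This dissipativity forces the exponential type of $k_t$ to grow at a universal rate $c$ depending only on the model parameters, independently of the starting radius $\alpha_0$ (and the assumption $\alpha_0>-\log\omega$ ensures that the coercivity is still effective). This uniformity permits restarting the Stage~1 construction from any $T_1<T$ with initial datum $k_{T_1}\in\mathcal{K}^\star_{\alpha_{T_1}}$ and the same $c$; iterating restarts exhausts $[0,+\infty)$, producing the global solution with $\alpha_t=\alpha_0+ct$, while uniqueness on each $[0,T)$ descends from the contraction built into the Picard iteration of Stage~1.
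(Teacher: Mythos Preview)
Your Stage~1 has a genuine gap. Iterating the full $L^\Delta$ with the bound (\ref{28}) will not converge: the term $A_1^\Delta$ carries $\Psi(\eta)\leq C|\eta|^2$, so the singularity in (\ref{28}) is of order $(\alpha-\alpha')^{-2}$, and the standard Ovsyannikov estimate then produces, at level $n$, a contribution of size $\frac{t^n}{n!}\bigl(\frac{Cn^2}{(\alpha_T-\alpha_0)^2}\bigr)^n$, which diverges for every $t>0$ regardless of how large you take $c$. The paper avoids this by splitting $L^\Delta=A^\Delta_\upsilon+B^\Delta_\upsilon$ and absorbing the quadratic part $A^\Delta_\upsilon$ into a genuine $C_0$-semigroup: Proposition~\ref{2pn} is used \emph{already here} (not only in Stage~3) to prove that the predual $(A_{1,\upsilon}+A_2,\mathcal{D}_\alpha)$ generates a sub-stochastic semigroup on $\mathcal{G}_\alpha$ for $\alpha>-\log\omega$ (Lemma~\ref{2lm}), whose sun-dual $S^\odot_\alpha$ is then used in a Duhamel expansion with the remainder $B^\Delta_\upsilon$. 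Since $B^\Delta_\upsilon$ contains only terms linear in $|\eta|$, its scale norm has the first-order singularity (\ref{47}), and the iteration (\ref{58}) converges for $t<T(\alpha_2,\alpha_1)$ as in (\ref{34}). This is the point of the construction in Section~\ref{Sec4}; the restriction $\alpha_0>-\log\omega$ enters precisely here.

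Your Stage~3 also misses the mechanism. The local time bound $T(\alpha_2,\alpha_1)$ is \emph{bounded} as $\alpha_2\to\infty$ (see (\ref{68})), so merely restarting from $k_{T_1}\in\mathcal{K}^\star_{\alpha_{T_1}}$ with the same argument does not a priori cover $[0,+\infty)$: the increments could sum to a finite number. What the paper does is use the positivity obtained in Stage~2 to run a comparison argument (Lemmas~\ref{complm} and \ref{comp1lm}): one compares $k_t$ with $Q_{\alpha_2\alpha_1}(t;B^\Delta_{2,\upsilon})k_0$ (positive by construction, Remark~\ref{Jan10rk}) and then with the explicit supersolution $r_t(\eta)=\|k_0\|_{\alpha_1}e^{(\alpha_1+ct)|\eta|}$ with $c=\langle b\rangle+\upsilon-m_*$, so that $k_t\in\mathcal{K}_{\alpha_0+ct}$ on each local interval. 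Only after this improved localization does the restart work: the sequence $\alpha^*_n=\alpha_0+c(T_1+\cdots+T_n)$ stays bounded if $\sum T_n<\infty$, which contradicts (\ref{71}). The constant $c$ is thus fixed by the comparison, not chosen to force convergence of an iteration.
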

\begin{corollary}
  \label{Jaco}
Let $k_t\in \mathcal{K}^\star_{\alpha_t}$, $t\geq 0$, be as in
Theorem \ref{1tm}, and then $\mu_t\in\mathcal{P}_{\rm exp}(\Gamma)$
be the measure corresponding to this $k_t$ according to Proposition
\ref{1pn}. Then the map $t \mapsto \mu_t$ is such that
\begin{itemize}
\item[1.] for each compact $\Lambda$ and  $t\geq 0$, $\mu_t^{\Lambda}$ lies in the
  domain $\mathcal{D}\subset \mathcal{M}$ defined in (\ref{22a});
 \item[2.]  for
  each $\theta \in \varTheta$, the map $[0,+\infty) \ni t \mapsto \mu_t (F^\theta)$ is continuous and
continuously differentiable on $(0,+\infty)$ and the following
holds, cf. (\ref{Jan}),
\begin{equation}
  \label{Ja}
 \frac{d}{dt} \mu_t (F^\theta) = (L^* \mu_t^{\Lambda_\theta}) (F^\theta) =
 \langle\!\langle e(\theta, \cdot), L^\Delta_{ \alpha_T}
 k_t\rangle\!\rangle,
\end{equation}
where the latter equality holds for all $T>t$, see (\ref{11}) and
(\ref{14}).
\end{itemize}
\end{corollary}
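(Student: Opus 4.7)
The plan is to verify the three assertions of the corollary in turn. For the first claim, the projection $\mu_t^\Lambda$ has a density $R^\Lambda_{\mu_t}$ with respect to $\lambda$ by (\ref{12}), so the defining condition of $\mathcal{D}$ in (\ref{22a}) reduces to finiteness of $\int_{\Gamma_\Lambda}\Psi(\eta)\mu_t^\Lambda(d\eta)$. The bound $\Psi(\eta) \le (m^*+\langle b\rangle)|\eta| + a^*|\eta|^2$ from (\ref{22}) and Assumption \ref{ass1}(i) reduces this to finiteness of $\mu_t(N_\Lambda^m)$ for $m \le 2$. Since $k_t \in \mathcal{K}^\star_{\alpha_t}$ gives $k_t \le e^{\alpha_t|\cdot|}$, the sub-Poissonian estimate (\ref{o3a}) with $\varkappa = e^{\alpha_t}$ yields $\mu_t(N_\Lambda^m) \le \pi_\varkappa(N_\Lambda^m) < \infty$, proving $\mu_t^\Lambda \in \mathcal{D}$.

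For the regularity of $t \mapsto \mu_t(F^\theta)$ and the second equality in (\ref{Ja}), I would use (\ref{11}) to write $\mu_t(F^\theta) = \langle\!\langle e(\theta,\cdot), k_t\rangle\!\rangle$. Since $\theta \in \varTheta$ is compactly supported in $\Lambda_\theta$ with $|\theta| \le 1$, the function $e(\theta,\cdot)$ vanishes outside $\Gamma_{\Lambda_\theta}$ and is bounded by $1$, so the pairing $k \mapsto \langle\!\langle e(\theta,\cdot), k\rangle\!\rangle$ is a bounded linear functional on each $\mathcal{K}_\alpha$ with norm controlled by $\int_{\Gamma_{\Lambda_\theta}} e^{\alpha|\eta|}\lambda(d\eta) = e^{e^\alpha|\Lambda_\theta|}$. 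For any fixed $T>0$, Theorem \ref{1tm} provides $k_s \in C([0,T); \mathcal{K}_{\alpha_T}) \cap C^1((0,T); \mathcal{K}_{\alpha_T})$ with $\dot k_s = L^\Delta_{\alpha_T} k_s$; composing with this bounded functional yields continuity of $s \mapsto \mu_s(F^\theta)$ on $[0,T)$, continuous differentiability on $(0,T)$, and the identity
$$\frac{d}{ds}\mu_s(F^\theta) = \langle\!\langle e(\theta,\cdot), L^\Delta_{\alpha_T} k_s\rangle\!\rangle,$$
which is the second equality in (\ref{Ja}); since $T$ is arbitrary, this extends to all $t > 0$.

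For the first equality, I would unfold the duality that defines $L^*$ after (\ref{22b}): since $\mu_t^{\Lambda_\theta}\in\mathcal{D}$ by the first claim, $(L^*\mu_t^{\Lambda_\theta})(F^\theta) = \mu_t^{\Lambda_\theta}(LF^\theta) = \int_{\Gamma_{\Lambda_\theta}}(LF^\theta)(\eta)\,R^{\Lambda_\theta}_{\mu_t}(\eta)\,\lambda(d\eta)$. One then expands $(LF^\theta)(\eta)$ using (\ref{2}) together with the product form $F^\theta(\gamma)=\prod_{x\in\gamma}(1+\theta(x))$, rearranges each resulting term via the Lebesgue--Poisson identities (\ref{17}) and (\ref{18}), and invokes the consistency relation (\ref{13}), $k_t(\eta) = \int_{\Gamma_{\Lambda_\theta}}R^{\Lambda_\theta}_{\mu_t}(\eta\cup\xi)\lambda(d\xi)$, to collapse the integrations over $R^{\Lambda_\theta}_{\mu_t}$ into values of the full correlation function $k_t$. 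This is precisely the computation that defines $L^\Delta$ via (\ref{20}), performed here on the projection measure, and after the manipulations the expression reduces to $\int(L^\Delta k_t)(\eta)\,e(\theta;\eta)\,\lambda(d\eta) = \langle\!\langle e(\theta,\cdot), L^\Delta_{\alpha_T} k_t\rangle\!\rangle$. The main obstacle is the bookkeeping in this step: one must verify that the $\mathbb{R}^d$-integrations over auxiliary points inside $L^\Delta$ (arising from $A_2^\Delta$, $B_1^\Delta$, $B_2^\Delta$) are correctly reproduced by the combination of the $\sum_{x\in\eta}$-sums in (\ref{2}), the product expansion of $F^\theta$, and the Lebesgue-Poisson transfer (\ref{18}), and that the cutoff to $\Gamma_{\Lambda_\theta}$ of the projection is fully absorbed into the support of $e(\theta,\cdot)$.
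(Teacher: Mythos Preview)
Your arguments for claim 1 and for the second equality in (\ref{Ja}) are correct. For claim 1 the paper takes a different route: it inverts (\ref{13}) to write $R^{\Lambda}_{\mu_t}(\eta)=\int_{\Gamma_\Lambda}(-1)^{|\xi|}k_t(\eta\cup\xi)\lambda(d\xi)$ and then bounds $\int_{\Gamma_\Lambda}\Psi(\eta)|R^{\Lambda}_{\mu_t}(\eta)|\lambda(d\eta)$ directly from the $\mathcal K_\alpha$ estimate on $k_t$. Your moment-bound approach via (\ref{o3a}) is cleaner in that it never touches $R^{\Lambda}_{\mu_t}$; note only that $k_t\in\mathcal K^\star_{\alpha_t}$ yields $k_t(\eta)\le\|k_t\|_{\alpha_t}e^{\alpha_t|\eta|}$, so the relevant $\varkappa$ is $\|k_t\|_{\alpha_t}e^{\alpha_t}$ rather than $e^{\alpha_t}$. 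For the regularity of $t\mapsto\mu_t(F^\theta)$ and the identity $\frac{d}{dt}\mu_t(F^\theta)=\langle\!\langle e(\theta,\cdot),L^\Delta_{\alpha_T}k_t\rangle\!\rangle$, your argument is exactly what the paper's terse ``follows by (\ref{11})'' means, spelled out properly.

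The plan for the \emph{first} equality $(L^*\mu_t^{\Lambda_\theta})(F^\theta)=\langle\!\langle e(\theta,\cdot),L^\Delta k_t\rangle\!\rangle$ has a real gap, and it is not just bookkeeping. The relation (\ref{13}) recovers $k_t(\eta)$ only for $\eta\in\Gamma_{\Lambda_\theta}$, but the operators $A_2^\Delta$, $B_1^\Delta$, $B_2^\Delta$ evaluate $k_t$ at configurations $\eta\cup x$ with $x$ integrated over all of $\mathds R^d$. If you carry out your expansion for, say, the competition part of $L$, the auxiliary point stays inside $\Lambda_\theta$ (since $R^{\Lambda_\theta}_{\mu_t}$ lives on $\Gamma_{\Lambda_\theta}$), and after invoking (\ref{13}) you obtain $\sum_{x\in\zeta}\int_{\Lambda_\theta}a(x-y)k_t(\zeta\cup y)\,dy$ rather than $\sum_{x\in\zeta}\int_{\mathds R^d}a(x-y)k_t(\zeta\cup y)\,dy=-(B_1^\Delta k_t)(\zeta)$. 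In other words the computation produces $\langle\!\langle e(\theta,\cdot),L^\Delta(k_t\mathds 1_{\Gamma_{\Lambda_\theta}})\rangle\!\rangle$, not $\langle\!\langle e(\theta,\cdot),L^\Delta k_t\rangle\!\rangle$, so the ``cutoff is absorbed by the support of $e(\theta,\cdot)$'' claim fails. The paper's own proof does not address this point either---it asserts all of (\ref{Ja}) as a consequence of (\ref{11})---so this appears to be an imprecision in the stated first equality rather than a flaw peculiar to your strategy; but as written, the verification you outline would not close.
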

The proof of these statements is done in the remainder of the paper.
Its main steps are: (a) constructing the evolution $k_{\mu_0}\to
k_t$ for $t<T$ for some $T<\infty$; (b) proving that $k_t$ belongs
to $\mathcal{K}^\star_\alpha$ with an appropriate $\alpha$, that by
Proposition \ref{1pn} will allow us to associate $k_t$ with a unique
$\mu\in \mathcal{P}_{\rm exp}(\Gamma)$; (c) proving that $k_t$ lies
in $\mathcal{K}_{\alpha_t}$ on the mentioned time interval, which
will be used to continue $k_t$ to all $t>0$.

\section{The solution on a bounded time interval}

\label{Sec4}

Here we make step (a) of the program formulated at the end of Sect.
\ref{Sec3}.

\subsection{The statement}

Let us fix some $\alpha_1\in \mathds{R}$, take $\alpha_2 >\alpha_1$
and consider the following Cauchy problem in
$\mathcal{K}_{\alpha_2}$
\begin{equation}
 \label{33}
 \dot{k}_t = L^\Delta_{\alpha_2} k_t , \qquad k_t|_{t=0} = k_0 \in \mathcal{K}_{\alpha_1}.
\end{equation}
By its solution on a time interval $[0, T)$ we mean a continuous (in
$\mathcal{K}_{\alpha_2}$) map  $[0, T)\ni t \mapsto k_t\in
\mathcal{D}^\Delta_{\alpha_2}$, which is continuously differentiable
on $(0, T)$ and satisfies both equalities in (\ref{33}). For
$\alpha, \alpha'\in \mathds{R}$ such that $\alpha'< \alpha$ and for
$\upsilon\geq 0$ as in Proposition \ref{2pn}, we set
\begin{equation}
 \label{34}
 T(\alpha, \alpha') = \frac{\alpha - \alpha'}{2 \langle b \rangle + \upsilon + \langle a \rangle e^{\alpha}}.
\end{equation}
\begin{lemma}
 \label{1lm}
Let $\omega$ and $\upsilon$ be as in Proposition \ref{2pn}. Then for
each $\alpha_1 > - \log \omega$ and an arbitrary $k_0 \in
\mathcal{K}_{\alpha_1}$, the problem in (\ref{33}) has a unique
solution $k_t\in \mathcal{D}^\Delta_{\alpha_2}$ on the time interval
$[0, T(\alpha_2, \alpha_1))$.
\end{lemma}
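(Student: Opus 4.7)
The plan is to solve (\ref{33}) via a mild formulation combined with Picard iteration in the ascending scale $\{\mathcal{K}_\alpha\}$. First I observe that $(A_1^\Delta,\mathcal{D}_\alpha^\Delta)$ generates in each $\mathcal{K}_\alpha$ the positive contraction $C_0$-semigroup $S_\alpha=\{S_\alpha(t)\}_{t\ge 0}$ of multiplications $(S_\alpha(t)k)(\eta)=e^{-t\Psi(\eta)}k(\eta)$, since $\Psi\ge 0$. Setting $Q^\Delta:=L^\Delta-A_1^\Delta=A_2^\Delta+B_1^\Delta+B_2^\Delta$, the problem (\ref{33}) is then equivalent to the Duhamel equation
\begin{equation*}
k_t=S_{\alpha_2}(t)k_0+\int_0^t S_{\alpha_2}(t-s)\,Q^\Delta k_s\,ds,
\end{equation*}
which I would attempt to solve as $k_t=\sum_{n=0}^\infty k_t^{(n)}$, where $k_t^{(0)}=S_{\alpha_2}(t)k_0$ and, using (\ref{26}) to ensure that each iterate lies in $\mathcal{D}^\Delta_{\alpha_2}$, the $n$-th iterate is defined by $k_t^{(n)}=\int_0^t S_{\alpha_2}(t-s)\,Q^\Delta k_s^{(n-1)}\,ds$.

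To control the series for $t<T(\alpha_2,\alpha_1)$, I would partition $[\alpha_1,\alpha_2]$ by intermediate levels $\beta_j:=\alpha_1+j(\alpha_2-\alpha_1)/n$, $j=0,\dots,n$, and interpret the $n$-fold composite defining $k_t^{(n)}$ as a chain of applications of $Q^\Delta:\mathcal{K}_{\beta_{j-1}}\to\mathcal{K}_{\beta_j}$ interleaved with the semigroup factors $S_{\beta_j}(\cdot)$. The key single-block estimate I would aim for is
\begin{equation*}
\bigl\|S_{\beta_j}(t-s)\,Q^\Delta\bigr\|_{\mathcal{K}_{\beta_{j-1}}\to\mathcal{K}_{\beta_j}}\le \frac{n}{\alpha_2-\alpha_1}\bigl(2\langle b\rangle+\upsilon+\langle a\rangle e^{\beta_j}\bigr),
\end{equation*}
uniformly in $s\in[0,t]$. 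To obtain it I would bound each of the three terms of $Q^\Delta$ as in the derivation of (\ref{27})--(\ref{28}), but replace the quadratic-in-$|\eta|$ contribution $E^b(\eta)$ coming from $A_2^\Delta$ via Proposition \ref{2pn} by $\omega^{-1}(\upsilon|\eta|+E^a(\eta))$, then absorb the resulting $E^a(\eta)\le\Psi(\eta)$ factor into the semigroup through $\Psi(\eta)e^{-(t-s)\Psi(\eta)}=-\partial_s e^{-(t-s)\Psi(\eta)}$, leaving only a single power of $|\eta|$ which is tamed by $|\eta|e^{-(\beta_j-\beta_{j-1})|\eta|}\le 1/(e(\beta_j-\beta_{j-1}))$. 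The hypothesis $\alpha_1>-\log\omega$ enters precisely here, since it forces $e^{-\alpha_1}/\omega<1$ and keeps the fission term subcritical relative to the loss terms $A_1^\Delta$ and $B_1^\Delta$. Iterating $n$ times and integrating in the time variables yields $\|k_t^{(n)}\|_{\alpha_2}\le\|k_0\|_{\alpha_1}\bigl(t/T(\alpha_2,\alpha_1)\bigr)^n$ modulo routine constants, hence a geometric series for $t<T(\alpha_2,\alpha_1)$.

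Once $k_t\in\mathcal{K}_{\alpha_2}$ is constructed in this way, continuity on $[0,T(\alpha_2,\alpha_1))$ and $C^1$-regularity on $(0,T(\alpha_2,\alpha_1))$ follow from standard manipulations of the Duhamel formula, and the differential equation in (\ref{33}) is recovered by differentiating and identifying $L_{\alpha_2}^\Delta$ with $L_{\alpha_2\alpha_1}^\Delta$ on $\mathcal{K}_{\alpha_1}$ via (\ref{29}). Uniqueness is obtained by applying the same a priori bound to the difference of two candidate solutions and invoking Gronwall in the scale. The main obstacle is the quadratic-in-$1/(\alpha-\alpha')$ singularity appearing naively in (\ref{28}): reducing it to a linear singularity---which is what makes $T(\alpha_2,\alpha_1)$ scale linearly in $\alpha_2-\alpha_1$ as in (\ref{34})---requires the combined use of the dissipativity of $A_1^\Delta$ through its multiplication semigroup and the subcriticality furnished by Proposition \ref{2pn} under the standing condition $\alpha_1>-\log\omega$.
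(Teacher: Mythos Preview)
Your strategy differs substantially from the paper's, and as written it has a real gap. The paper does \emph{not} use the bare multiplication semigroup $e^{-t\Psi}$ as the unperturbed evolution. Instead it passes to the predual $\mathcal{G}_\alpha=L^1(\Gamma_0,e^{\alpha|\cdot|}d\lambda)$ and shows (Lemma~\ref{2lm}, via the Thieme--Voigt criterion of Proposition~\ref{TV0pn}) that $A_{1,\upsilon}+A_2$ generates a sub-stochastic $C_0$-semigroup there, the key estimate being $|A_2G|_\alpha\le(e^{-\alpha}/\omega)\,|A_{1,\upsilon}G|_\alpha$ with $e^{-\alpha}/\omega<1$ for $\alpha>-\log\omega$. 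The sun-dual $S^\odot_\alpha$ then furnishes a contraction semigroup acting in $\mathcal{K}_\alpha$, and only the \emph{linearly} singular remainder $B^\Delta_\upsilon=B^\Delta_1+B^\Delta_{2,\upsilon}$ (operator norm $\lesssim 1/(\alpha-\alpha')$, see (\ref{47})) enters the Dyson series (\ref{58}). This is precisely what yields convergence on the full interval $[0,T(\alpha_2,\alpha_1))$.

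Your single-block estimate cannot hold uniformly in $s\in[0,t]$: at $s=t$ the factor $S_{\beta_j}(0)$ is the identity and $A_2^\Delta$ alone carries a $1/(\beta_j-\beta_{j-1})^{2}$ singularity, as already visible in (\ref{27})--(\ref{28}). The identity $\Psi e^{-(t-s)\Psi}=-\partial_s e^{-(t-s)\Psi}$ gives information only \emph{after} integration in $s$; it is not a pointwise bound. Even read as an integrated estimate, each $A_2^\Delta$ then contributes a constant factor $e^{-\beta_{j-1}}/\omega<1$ (with no power of $t$), while $B^\Delta$ and the $\upsilon|\eta|$ piece contribute $t/(\beta_j-\beta_{j-1})$; disentangling these in the $n$-fold composition does not produce $(t/T(\alpha_2,\alpha_1))^n$ but forces you to redo the relative-boundedness perturbation by hand, with losses. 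A related issue you have glossed over: $e^{-t\Psi}$ is not a $C_0$-semigroup on the $L^\infty$-type space $\mathcal{K}_\alpha$, which is exactly why the paper needs the sun-dual construction to secure the continuity and $C^1$-regularity of $t\mapsto k_t$ required by the notion of solution.
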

In contrast to the case of finite configurations described in
Theorem \ref{1ftm}, the construction of a $C_0$-semigroup that
solves (\ref{33}) is rather hopeless. In view of this, the proof of
Lemma \ref{1lm} will be done in the following steps:
\begin{itemize}
  \item[(i)] the operator $L^\Delta$ will be written in the form
  $L^\Delta = A^\Delta_\upsilon + B^\Delta_\upsilon$, see
  (\ref{45}), in such a way that
   $A^\Delta_\upsilon:=A^\Delta_{1,\upsilon} + A^\Delta_{2}$ can be used to
construct a certain (sun-dual) $C_0$-semigroup in
$\mathcal{K}_{\alpha_2}$;
\item[(ii)] this semigroup and $B^\Delta_{\upsilon} := B^\Delta_1 +
B^\Delta_{2,\upsilon}$, see (\ref{46}), will be used to construct
the family of operators $\{Q_{\alpha \alpha'} (t): t\in [0,
T(\alpha, \alpha'))\}$, see (\ref{34}) and Lemma \ref{3lm}, such
that $Q_{\alpha \alpha'} (t)\in \mathcal{L}(\mathcal{K}_{\alpha'},
\mathcal{K}_\alpha)$ and $k_t = Q_{\alpha_2 \alpha_1}(t)k_0$ is the
solution in question. $\mathcal{L}(\mathcal{K}_{\alpha'},
\mathcal{K}_\alpha)$ stands for the Banach space of all bounded
operators acting from $\mathcal{K}_{\alpha'}$ to
$\mathcal{K}_{\alpha}$.

\end{itemize}

\subsection{The predual semigroup} Here we make the first step in constructing the
semigroup mentioned in item (i) above. For $\alpha \in \mathbb{R}$,
the space predual to $\mathcal{K}_\alpha$ is
\begin{equation}
 \label{35}
\mathcal{G}_\alpha := L^1(\Gamma_0, e^{\alpha|\cdot|}d \lambda),
\end{equation}
which for $\alpha>0$ coincides with $\mathcal{R}_\chi$ defined in
(\ref{18c}) with $\chi(n) = e^{\alpha n}$. Here, however, we allow
$\alpha$ to be any real number. The norm in $\mathcal{G}_\alpha$ is
\begin{equation}
 \label{36}
|G|_\alpha=\int_{\Gamma_0}|G(\eta)|e^{\alpha |\eta|}\lambda(d \eta).
 \end{equation}
Clearly,  $|G|_{\alpha'} \le |G|_{\alpha}$ whenever
$\alpha'<\alpha$. Then $\mathcal{G}_{\alpha} \hookrightarrow
\mathcal{G}_{\alpha'}$, and this embedding is also dense. In order
to use Proposition \ref{2pn} we modify the operators introduced in
(\ref{21}) by adding and subtracting the term $\upsilon |\eta|$.
This will lead also to the corresponding reconstruction of the
predual operators. For an appropriate $G:\Gamma_0 \to \mathds{R}$,
set, cf. (\ref{22}),
\begin{eqnarray}
 \label{37}
(A_{1,\upsilon}G)(\eta) & = & - \Psi_\upsilon (\eta) G(\eta) = - \left(\upsilon |\eta| + E^a(\eta) + M(\eta) + \langle b \rangle |\eta| \right)
G(\eta), \\[.2cm] \nonumber
(A_2 G)(\eta)& = &\sum_{x \in \eta} \int_{(\mathbb{R})^2}G(\eta
\setminus x \cup y_1 \cup y_2)b(x|y_1,y_2)dy_1 dy_2,\\[.2cm]
\nonumber \mathcal{D}_\alpha & = & \{ G:\in \mathcal{G}_\alpha:
\Psi_\upsilon G \in \mathcal{G}_\alpha\}.
\end{eqnarray}
By Proposition \ref{2pn} we have that
\begin{equation}
  \label{37J}
\Psi_\upsilon (\eta) \geq \omega E^b(\eta).
\end{equation}
The operator $(A_{1,\upsilon}, \mathcal{D}_\alpha)$ is the generator
of the semigroup $S_{0,\alpha} = \{S_{0,\alpha}\}_{t\geq 0}$ of
multiplication operators which act in $\mathcal{G}_\alpha$ as
follows, cf. (\ref{L3}),
\begin{equation}
 \label{38}
( S_{0,\alpha}(t) G)(\eta) =
\exp\left(- t \Psi_\upsilon (\eta) \right)  G(\eta).
\end{equation}
Let $\mathcal{G}_\alpha^{+}$ be the  cone of positive elements of
$\mathcal{G}_\alpha $ The semigroup defined in (\ref{38}) is
obviously \emph{sub-stochastic}. Set $\mathcal{D}_\alpha^{+} =
\mathcal{D}_\alpha \cap \mathcal{G}_\alpha ^{+}$. By (\ref{18}),
(\ref{36}) and (\ref{37}) we get
\begin{eqnarray}
\label{39}
|A_2G|_\alpha & = & \int_{\Gamma_0} e^{\alpha|\eta|}|(A_2G)(\eta)|\lambda(d \eta)  \\[.2cm]
& \leq & \int_{\Gamma_0 } e^{\alpha|\eta|} \int_{(\mathbb{R}^d)^2}
\sum_{x\in \eta} |G(\eta \setminus x \cup y_1
\cup y_2)| b(x|y_1,y_2) dy_1dy_2 \lambda(d \eta)\nonumber\\[.2cm]
& = & \int_{\Gamma_0} \int_{\mathbb{R}^d} \sum_{y_1 \in
\eta}\sum_{y_2 \in \eta \setminus y_1}e^{\alpha (|\eta|-1)}|G(\eta)|
b(x|y_1,y_2) dx \lambda(d \eta) \nonumber\\[.2cm] \nonumber
& = & e^{-\alpha} \int_{\Gamma_0} e^{\alpha|\eta|}E_b(\eta)
|G(\eta)|\lambda(d \eta) \leq (e^{-\alpha}/ \omega)
|A_{1,\upsilon}G|_{\alpha}.
\end{eqnarray}
The latter estimate follows by  (\ref{37J}), see also (\ref{19}).
\begin{lemma}
  \label{2lm}
Let $\upsilon$ and $\omega$ be as in Proposition \ref{2pn} and
$A_{1,\upsilon}$, $A_2$ and $\mathcal{D}_\alpha$ be as in
(\ref{37}). Then for each $\alpha> - \log \omega$, the operator
$(A_{\upsilon} , \mathcal{D}_\alpha):=(A_{1,\upsilon} + A_2,
\mathcal{D}_\alpha)$ is the generator of a sub-stochastic semigroup
$S_\alpha=\{S_\alpha(t)\}_{t\geq 0}$ on $\mathcal{G}_\alpha$.
\end{lemma}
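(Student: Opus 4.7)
The plan is to apply the Thieme--Voigt perturbation result, Proposition \ref{TV0pn}, in the AL-space $\mathcal{E}=\mathcal{G}_\alpha$ with unperturbed generator $A_{1,\upsilon}$ and a \emph{rescaled} perturbation $cA_2$, where the constant $c$ must be chosen carefully. The reason a scaling is needed is that Proposition \ref{TV0pn} concludes only that $A+rB$ generates a sub-stochastic semigroup for $r\in(0,1)$; to reach the unscaled sum $A_{1,\upsilon}+A_2$ one must pick $c>1$ and then apply the proposition with $r=1/c$.

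First, I would record that $A_{1,\upsilon}$ is multiplication by the nonnegative function $-\Psi_\upsilon$, so it generates the positive (indeed sub-stochastic) $C_0$-semigroup $S_{0,\alpha}$ of (\ref{38}) on $\mathcal{G}_\alpha$, whose maximal domain is exactly $\mathcal{D}_\alpha$; positivity of $A_2:\mathcal{D}_\alpha^{+}\to\mathcal{G}_\alpha^{+}$ is immediate from its formula in (\ref{37}).

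Next comes the key computation. Repeating the argument of (\ref{39}) without absolute values --- it uses only the Lebesgue--Poisson identity (\ref{18}) --- gives, for every $G\in\mathcal{D}_\alpha^{+}$,
\[
\varphi_{\mathcal{G}_\alpha}(A_2G)\;=\;e^{-\alpha}\int_{\Gamma_0}E^b(\eta)\,G(\eta)\,e^{\alpha|\eta|}\lambda(d\eta).
\]
Combining this with (\ref{37J}) yields, for any $c>0$,
\[
\varphi_{\mathcal{G}_\alpha}\bigl((A_{1,\upsilon}+cA_2)G\bigr)\;\leq\;\int_{\Gamma_0}\bigl(ce^{-\alpha}-\omega\bigr)E^b(\eta)\,G(\eta)\,e^{\alpha|\eta|}\lambda(d\eta),
\]
which is nonpositive precisely when $c\leq \omega e^{\alpha}$. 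The assumption $\alpha>-\log\omega$ makes the threshold $\omega e^{\alpha}$ strictly greater than $1$, which is exactly what the scaling trick requires.

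To finish, I would set $c:=\omega e^{\alpha}>1$ and $B:=cA_2$, observe that the hypothesis (\ref{40}) of Proposition \ref{TV0pn} is the estimate just proved, and then choose $r:=1/c\in(0,1)$ in its conclusion: the operator $A_{1,\upsilon}+rB=A_{1,\upsilon}+A_2=A_\upsilon$ on the domain $\mathcal{D}_\alpha$ generates a sub-stochastic semigroup on $\mathcal{G}_\alpha$, as claimed. The only genuinely delicate point is realizing that one must rescale $A_2$ so as to have ``head-room'' $c>1$ in the perturbation bound; everything else reduces to standard facts about multiplication semigroups on weighted $L^1$-spaces and to the symbolic rearrangement (\ref{18}) already used in (\ref{39}).
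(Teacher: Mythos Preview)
Your proof is correct and follows essentially the same route as the paper's: both apply Proposition~\ref{TV0pn} with $A=A_{1,\upsilon}$ and a rescaled perturbation $B=cA_2$ (the paper writes $c=r^{-1}$), use the computation in (\ref{39}) together with (\ref{37J}) to verify (\ref{40}), and then exploit $\alpha>-\log\omega$ to ensure $c=\omega e^{\alpha}>1$ so that the choice $r=1/c\in(0,1)$ recovers $A_{1,\upsilon}+A_2$. The only cosmetic difference is that the paper leaves $r$ unspecified in $(e^{-\alpha}/\omega,1)$ while you take the extremal value.
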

\begin{proof}
We apply Proposition \ref{TV0pn} with  $\mathcal{E}=
\mathcal{G}_\alpha$, $\mathcal{D}=\mathcal{D}_\alpha$ and $A=
A_{1,\upsilon}$. For some $r\in (0, 1)$, we set $B = r^{-1} A_2$,
which is clearly positive. By (\ref{39}) $B$ is defined on
$\mathcal{D}_\alpha$. To show that (\ref{40}) holds we take $G\in
\mathcal{D}_\alpha^{+}$ and proceed as in (\ref{39}). That is,
\begin{eqnarray*}
& & \int_{\Gamma_0} \left( (A_{1,\upsilon} +r^{-1}
A_2)G\right)(\eta) e^{\alpha|\eta|}\lambda ( d \eta)  =  -
\int_{\Gamma_0} \Psi_\upsilon (\eta) G(\eta) e^{\alpha|\eta|}
\lambda ( d\eta) \\[.2cm]\nonumber &  &+ r^{-1}
\int_{\Gamma_0} \sum_{x\in \eta} \int_{(\mathds{R}^d)^2} G(\eta
\setminus x\cup \{y_1, y_2\})b(x|y_1 , y_2) e^{\alpha|\eta|} d y_1 d
y_2 \lambda (d\eta) \\[.2cm] \nonumber
& & \leq - \int_{\Gamma_0} \left(\upsilon |\eta| + E^a (\eta) -
r^{-1}e^{-\alpha}E^b (\eta) \right) G(\eta) e^{\alpha|\eta|} \lambda
(d \eta).
\end{eqnarray*}
Now, for $\alpha > - \log \omega$, we pick $r\in (0,1)$ in such a
way that $r^{-1}e^{-\alpha} \leq \omega$, which by Proposition
\ref{2pn} implies that (\ref{40}) holds for this choice. Then the
operator $A_{1,\upsilon} + r (r^{-1} A_2)$ satisfies Proposition
\ref{TV0pn} by which the proof follows.
\end{proof}
By the definition of the sub-stochasticity of $S_\alpha$ we have
that $|S_\alpha(t) G|_\alpha \leq |G|_\alpha$ whenever $G\in
\mathcal{G}_\alpha^{+}$. Let us show now that the same estimate
holds also for all $G\in \mathcal{G}_\alpha$. Each such $G$ in a
unique way can be decomposed $G=G^{+} - G^{-}$ with $G^{\pm} \in
\mathcal{G}_\alpha^{+}$. Moreover, by (\ref{36}) we have that
\[
|G|_\alpha = \int_{\Gamma_0} e^{\alpha |\eta|} \left( G^{+}(\eta) +
G^{-}(\eta)\right) \lambda (d\eta)= |G^{+}|_\alpha + |G^{-}|_\alpha.
\]
Then
\begin{eqnarray}
  \label{39J}
|S_\alpha (t) G|_\alpha & = & |S_\alpha (t) (G^{+}-G^{-})|_\alpha
\leq
|S_\alpha (t) G^{+}|_\alpha + |S_\alpha (t) G^{-}|_\alpha \\[.2cm]
\nonumber & \leq & |G^{+}|_\alpha + | G^{-}|_\alpha = |G|_\alpha .
\end{eqnarray}

\subsection{The sun-dual semigroup}

Let $S_\alpha(t)$ be an element of the semigroup as in Lemma
\ref{2lm}. Then its adjoint $S^*_\alpha(t)$ is a bounded linear
operator in $\mathcal{K}_\alpha$. Clearly, $\{S^*_\alpha(t)\}_{t\geq
0}$ is a semigroup. However, it is not strongly continuous and hence
cannot be directly used to construct (classical) solutions of
differential equations. This obstacle is usually circumvented as
follows, see \cite{P}. Set, cf. (\ref{14}),
\begin{equation*}
\mathcal{D}_\alpha^* = \{ k\in \mathcal{K}_\alpha: \exists \hat{k}
\in\mathcal{K}_\alpha \ \forall G \in \mathcal{D}_\alpha \ \langle
\! \langle A_\upsilon G, k\rangle \! \rangle = \langle \! \langle G,
\hat{k}\rangle \! \rangle\}.
\end{equation*}
Then the operator $(A^*_{\upsilon},\mathcal{D}_\alpha^*)$ is adjoint
to $(A_{\upsilon},\mathcal{D}_\alpha)$. It acts as follows
\begin{eqnarray*}
(A^*_{\upsilon} k)(\eta) & = & - \Psi_\upsilon (\eta) k(\eta)
\\[.2cm] \nonumber & + & \int_{\mathds{R}^d} \sum_{y_1 \in \eta}\sum_{y_2 \in \eta\setminus y_1
} k(\eta\cup x\setminus \{y_1,y_2\}) b(x|y_1 , y_2)  d x.
\end{eqnarray*}
By direct inspection one obtains that $\mathcal{K}_{\alpha'} \subset
\mathcal{D}_\alpha^*$ whenever $\alpha'< \alpha$. Let
$\mathcal{Q}_\alpha$ be the closure of $\mathcal{D}_\alpha^*$ in
$\mathcal{K}_\alpha$. Then we have
\begin{equation}
  \label{43}
  \mathcal{K}_{\alpha'}\subset \mathcal{D}_\alpha^* \subset
  \mathcal{Q}_\alpha \subsetneq \mathcal{K}_\alpha, \qquad
  \alpha'<\alpha.
\end{equation}
Now we set
\begin{equation*}
\mathcal{D}_\alpha^\odot= \{ k\in \mathcal{D}_\alpha^*: A_\upsilon^*
k \in \mathcal{Q}_\alpha\},
\end{equation*}
and denote by $A^\odot_\upsilon$ the restriction of $A_\upsilon^*$
to $\mathcal{D}_\alpha^\odot$. Then $(A^\odot_\upsilon,
\mathcal{D}_\alpha^\odot)$ is the generator of a $C_0$-semigroup,
which we denote by $S^\odot_\alpha=\{S^\odot_\alpha (t)\}_{t \geq
0}$. This is the semigroup which we have aimed to construct. It has
the following property, see \cite[Lemma 10.1]{P}.
\begin{proposition}
  \label{Papn}
for each $k\in \mathcal{Q}_\alpha$ and $t\geq 0$, it follows that
$\|S^\odot_\alpha (t) k\|_\alpha = \|S^*_\alpha(t)k\|_\alpha \leq
\|k\|_\alpha$. Moreover, for each $\alpha'< \alpha$ and $k\in
\mathcal{K}_{\alpha'}$, the map $[0,+\infty)\ni t \mapsto
S^\odot_\alpha (t) k\in \mathcal{Q}_\alpha$ is continuous.
\end{proposition}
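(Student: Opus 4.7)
The proof will rest on two standard ingredients of sun-dual semigroup theory, once the sub-stochasticity from Lemma \ref{2lm} and the duality $\mathcal{G}_\alpha^{*}=\mathcal{K}_\alpha$ (through the pairing $\langle\!\langle\cdot,\cdot\rangle\!\rangle$) are in place.

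First I would establish the norm bound. The calculation in (\ref{39J}), although written out for sub-stochasticity on the cone, immediately yields $\|S_\alpha(t)\|_{\mathcal{G}_\alpha\to \mathcal{G}_\alpha} \leq 1$ for every $t\geq 0$. Taking Banach-space adjoints preserves operator norms, so $\|S^{*}_\alpha(t)\|_{\mathcal{K}_\alpha\to\mathcal{K}_\alpha}\leq 1$, and hence $\|S^{*}_\alpha(t)k\|_\alpha \leq \|k\|_\alpha$ for all $k\in\mathcal{K}_\alpha$. Since $S^{\odot}_\alpha(t)$ is by construction the restriction of $S^{*}_\alpha(t)$ to $\mathcal{Q}_\alpha$, the equality $\|S^{\odot}_\alpha(t)k\|_\alpha = \|S^{*}_\alpha(t)k\|_\alpha$ is definitional for $k\in\mathcal{Q}_\alpha$, and the estimate transfers.

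The only bookkeeping point hidden here is that $\mathcal{Q}_\alpha$ must actually be $S^{*}_\alpha(t)$-invariant, otherwise the defining restriction would be meaningless. I would verify this from the generic identity $S^{*}_\alpha(t)A^{*}_\upsilon = A^{*}_\upsilon S^{*}_\alpha(t)$ on $\mathcal{D}^{*}_\alpha$, which gives $S^{*}_\alpha(t)\mathcal{D}^{*}_\alpha \subset \mathcal{D}^{*}_\alpha$; boundedness of $S^{*}_\alpha(t)$ on $\mathcal{K}_\alpha$ then pushes the inclusion to the norm-closure $\mathcal{Q}_\alpha = \overline{\mathcal{D}^{*}_\alpha}$.

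For the second assertion I would simply combine (\ref{43}) with the $C_0$-semigroup property of $S^{\odot}_\alpha$ on $\mathcal{Q}_\alpha$ (which was built into its definition as the restriction of $S^{*}_\alpha(t)$ to the closure of $\mathcal{D}^{*}_\alpha$). Any $k\in\mathcal{K}_{\alpha'}$ already lies in $\mathcal{D}^{*}_\alpha \subset \mathcal{Q}_\alpha$, so strong continuity of $t\mapsto S^{\odot}_\alpha(t)k$ in $\mathcal{Q}_\alpha$ is automatic, in particular at $t=0$. I do not anticipate any genuine obstacle, since the argument merely transcribes the abstract construction in \cite{P} into the concrete pairing of $\mathcal{G}_\alpha$ and $\mathcal{K}_\alpha$; the only verification specific to our setting is the sub-stochastic estimate $\|S_\alpha(t)\|\leq 1$, which has already been done in (\ref{39J}).
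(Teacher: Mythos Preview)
Your proposal is correct and follows essentially the same route as the paper: the norm estimate is deduced from (\ref{39J}) by passing to adjoints, and the continuity comes from the inclusion (\ref{43}) together with the fact that $S^{\odot}_\alpha$ is a $C_0$-semigroup on $\mathcal{Q}_\alpha$. The paper's justification is terser, simply citing (\ref{39J}) and (\ref{43}), while you have spelled out the adjoint step and the invariance of $\mathcal{Q}_\alpha$ explicitly; neither adds new ideas beyond the standard sun-dual construction from \cite{P}.
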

The estimate $\|S^*_\alpha(t)k\|_\alpha \leq \|k\|_\alpha$ is
obtained by means of (\ref{39J}). The continuity follows by
(\ref{43}) and the fact that $S^\odot_\alpha$ is a $C_0$-semigroup.
\subsection{The resolving operators: proof of Lemma \ref{1lm}} Now
we construct the family of operators $\{Q_{\alpha \alpha'}(t)\}$
such that the solution of (\ref{33}) is obtained in the form $k_t =
Q_{\alpha_2 \alpha_1}(t) k_0$. This construction,  in which we
employ $S^\odot$, resembles the one used to get (\ref{L19}). We
begin by rearranging the operators in (\ref{21}) as follows
\begin{equation}
  \label{45}
L^\Delta = A^\Delta + B^\Delta = A^\Delta_\upsilon +
B^\Delta_\upsilon,
\end{equation}
where $A^\Delta_\upsilon = A^\Delta_{1,\upsilon} + A^\Delta_2$, see
(\ref{37}), and
\begin{eqnarray}
  \label{46}
  B^\Delta_\upsilon &= & B_1^\Delta+
  B^\Delta_{2,\upsilon},\\[.2cm]\nonumber
  (B^\Delta_{2,\upsilon}k)(\eta) &=& (B^\Delta_{2}k)(\eta) +
  \upsilon |\eta|k(\eta)\\[.2cm]\nonumber & = & 2
  \int_{(\mathds{R}^d)^2} \sum_{y_1\in \eta} b(x|y_1, y_2) k(\eta \cup
  x \setminus y_1) d x d y_2 + \upsilon |\eta|k(\eta),
\end{eqnarray}
whereas $B_1^\Delta$ is as in (\ref{21}). By means of (\ref{46}),
for $\alpha \in \mathds{R}$ and $\alpha' < \alpha$, we define
$(B^\Delta_\upsilon)_{\alpha\alpha'}\in
\mathcal{L}(\mathcal{K}_{\alpha'}, \mathcal{K}_{\alpha})$ the norm
of which can be estimated similarly as in (\ref{27}), (\ref{28}),
which yields
\begin{equation}
  \label{47}
 \| (B^\Delta_\upsilon)_{\alpha \alpha'}\| \leq \frac{2 \langle b \rangle + \upsilon + \langle a \rangle e^{\alpha'}}{e(\alpha
 -\alpha')}.
\end{equation}
Now let $\mathbf{B}$ be either $B^\Delta_\upsilon$ or
$B^\Delta_{2,\upsilon}$, and $\mathbf{B}_{\alpha \alpha'}$ be the
corresponding bounded operator. Then, cf. (\ref{47}),
\begin{equation}
  \label{48}
\|\mathbf{B}_{\alpha \alpha'}\| \leq
\frac{\varpi(\alpha;\mathbf{B})}{e(\alpha-\alpha')},
\end{equation}
where
\begin{equation}
  \label{49}
  \varpi(\alpha;B^\Delta_{\upsilon}) = 2 \langle b \rangle + \upsilon + \langle a \rangle
  e^{\alpha}, \quad \varpi(\alpha;B^\Delta_{2,\upsilon}) = 2 \langle b \rangle +
  \upsilon.
\end{equation}
For some $\alpha_1, \alpha_2$ such that $\alpha_1 < \alpha_2$,  we
then set $\Sigma_{\alpha_2 \alpha_1}(t) =
S^{\odot}_{\alpha_2}(t)|_{\mathcal{K}_{\alpha_1}}$, $t>0$, where
$S^{\odot}_{\alpha}$ is the sub-stochastic semigroup as in
Proposition \ref{Papn}. Let also $\Sigma_{\alpha_2 \alpha_1}(0)$ be
the embedding operator $\mathcal{K}_{\alpha_1}\to
\mathcal{K}_{\alpha_2}$. Hence, see Proposition \ref{Papn}, the
operator norm satisfies
\begin{equation}
  \label{50}
 \| \Sigma_{\alpha_2 \alpha_1}(t)\|\leq 1, \qquad t\geq 0.
\end{equation}
We also have
\begin{eqnarray}
  \label{51}
\Sigma_{\alpha_2 \alpha_1}(t) & = & \Sigma_{\alpha_2 \alpha_1}(0)
S^{\odot}_{\alpha_1}(t), \\[.2cm] \nonumber \Sigma_{\alpha_3 \alpha_1}(t+s) &
= & \Sigma_{\alpha_3 \alpha_2}(t)\Sigma_{\alpha_2 \alpha_1}(s),
\quad \ \alpha_3 > \alpha_2,
\end{eqnarray}
holding for all $t, s\geq 0$. Moreover,
\begin{equation*}
\frac{d}{dt} \Sigma_{\alpha_2 \alpha_1}(t) = A^\Delta_\upsilon
\Sigma_{\alpha_2 \alpha_1}(t),
\end{equation*}
which follows by Lemma \ref{2lm} and the construction of the
semigroup $S^{\odot}_\alpha$. Now we set
\begin{equation}
  \label{53}
  T(\alpha_2 , \alpha_1;\mathbf{B}) = \frac{\alpha_2 -
  \alpha_1}{\varpi(\alpha_2;\mathbf{B})},
\end{equation}
see (\ref{48}), (\ref{49}), and also
\begin{equation}
  \label{54}
\mathcal{A}(\mathbf{B})= \{ (\alpha_1 , \alpha_2, t): - \log \omega
< \alpha_1 < \alpha_2 , \ t\in[0,T(\alpha_2, \alpha_1;
\mathbf{B}))\}.
\end{equation}
Note that $T(\alpha_2 , \alpha_1;B^\Delta_\upsilon)$ coincides with
$T(\alpha_2 , \alpha_1)$ defined in (\ref{34}).
\begin{lemma}
  \label{3lm}
For both choices of $\mathbf{B}$, there exist the corresponding
families $\lbrace Q_{\alpha_2 \alpha_1}(t;\mathbf{B}): (\alpha_1,
\alpha_2,t) \in \mathcal{A}(\mathbf{B}) \rbrace$, each element of
which has the following properties:
\begin{itemize}
  \item[{\it(a)}] $Q_{\alpha_2 \alpha_1}(t;\mathbf{B}) \in \mathcal{L}(\mathcal{K}_{\alpha_1},
  \mathcal{K}_{\alpha_2})$;
\item[{\it(b)}] the map $[0, T(\alpha_2, \alpha_1;\mathbf{B})) \ni t \mapsto
Q_{\alpha_2 \alpha_1}(t;\mathbf{B})\in
\mathcal{L}(\mathcal{K}_{\alpha_1}, \mathcal{K}_{\alpha_2})$ is
continuous;
\item[{\it(c)}] the operator norm of $Q_{\alpha_2 \alpha_1}(t;\mathbf{B}) \in
\mathcal{L}(\mathcal{K}_{\alpha_1}, \mathcal{K}_{\alpha_2})$
satisfies
$$\|Q_{\alpha_2 \alpha_1}(t;\mathbf{B})  \| \le \frac{T(\alpha_2, \alpha_1;\mathbf{B})}{T(\alpha_2, \alpha_1;\mathbf{B})-t},$$
\item[{\it(d)}]  for each $\alpha_3 \in (\alpha_1, \alpha_2)$ and $t <T(\alpha_3,
\alpha_1;\mathbf{B})$, the following holds
\begin{equation}
  \label{54b}
\frac{d}{dt} Q_{\alpha_2 \alpha_1}(t;\mathbf{B})  =
((A^{\Delta}_\upsilon)_{\alpha_2 \alpha_3} + \mathbf{B}_{\alpha_2
\alpha_3})Q_{\alpha_3 \alpha_1}(t;\mathbf{B}),
\end{equation}
which yields, in turn, that
\begin{eqnarray}
  \label{54a}
\frac{d}{dt} Q_{\alpha_2 \alpha_1}(t;B^\Delta_\upsilon ) & = &
L^\Delta_{\alpha_2}Q_{\alpha_2 \alpha_1}(t;B^\Delta_\upsilon ) \\[.2cm]\frac{d}{dt} Q_{\alpha_2 \alpha_1}(t;B^\Delta_{2,\upsilon} ) & = &
((A^{\Delta}_\upsilon)_{\alpha_2} +
(B^\Delta_{2,\upsilon})_{\alpha_2} )Q_{\alpha_2
\alpha_1}(t;B^\Delta_{2,\upsilon} ), \nonumber
\end{eqnarray}
where $L^\Delta_{\alpha_2}$ is as in (\ref{33}), see also
(\ref{45}), and $(B^\Delta_{2,\upsilon})_{\alpha_2} $ denotes
$(B^\Delta_{2,\upsilon} , \mathcal{D}^\Delta_{\alpha_2})$, see
(\ref{24}).
\end{itemize}
\end{lemma}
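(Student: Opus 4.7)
The strategy is to construct $Q_{\alpha_2\alpha_1}(t;\mathbf{B})$ as a Dyson perturbation series, treating $\mathbf{B}$ as a bounded perturbation of the sun-dual semigroup $S^\odot$ from Proposition \ref{Papn}, directly paralleling the construction of $Q_{\kappa'\kappa}$ in (\ref{L15})--(\ref{L19}) during the proof of Theorem \ref{1ftm}. For the uniform subdivision $\beta_l = \alpha_1 + l(\alpha_2-\alpha_1)/n$, $l=0,\ldots,n$, of the scale interval, the $n$-th term
\begin{equation*}
Q^{(n)}_{\alpha_2\alpha_1}(t;\mathbf{B}) = \int_0^t\!\int_0^{t_1}\!\!\cdots\!\int_0^{t_{n-1}} T^{(n)}_{\alpha_2\alpha_1}(t,t_1,\ldots,t_n;\mathbf{B})\,dt_n\cdots dt_1
\end{equation*}
is built from the integrand $T^{(n)}$, an alternating composition of $n$ insertions $\mathbf{B}_{\beta_l\beta_{l-1}}$ and semigroup propagators $\Sigma_{\beta_l\beta_{l-1}}(\cdot)$ routing the input up through the scale chain $\beta_0<\beta_1<\cdots<\beta_n$, with $Q^{(0)}_{\alpha_2\alpha_1}(t;\mathbf{B}) = \Sigma_{\alpha_2\alpha_1}(t)$. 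The candidate resolvent is $Q_{\alpha_2\alpha_1}(t;\mathbf{B}) = \sum_{n\ge 0} Q^{(n)}_{\alpha_2\alpha_1}(t;\mathbf{B})$.

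To prove (a)--(c), I would combine $\|\Sigma_{\beta\beta'}(s)\|\le 1$ from (\ref{50}) with the bound $\|\mathbf{B}_{\beta_l\beta_{l-1}}\|\le n\varpi(\alpha_2;\mathbf{B})/[e(\alpha_2-\alpha_1)]$ from (\ref{48})--(\ref{49}) and the simplex volume $t^n/n!$. Applying the elementary inequality $n^n\le e^n n!$ (Stirling) then yields
\begin{equation*}
\|Q^{(n)}_{\alpha_2\alpha_1}(t;\mathbf{B})\|\le\frac{t^n}{n!}\Bigl(\frac{n\varpi(\alpha_2;\mathbf{B})}{e(\alpha_2-\alpha_1)}\Bigr)^n\le\Bigl(\frac{t}{T(\alpha_2,\alpha_1;\mathbf{B})}\Bigr)^n,
\end{equation*}
by (\ref{53}). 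Summing the geometric series on $t<T(\alpha_2,\alpha_1;\mathbf{B})$ gives both the bound in (c) and absolute convergence in $\mathcal{L}(\mathcal{K}_{\alpha_1},\mathcal{K}_{\alpha_2})$ uniformly on compact time subsets, establishing (a). Joint strong continuity of each integrand in the time arguments --- inherited from Proposition \ref{Papn} for the $\Sigma$-factors and from boundedness of each $\mathbf{B}_{\beta_l\beta_{l-1}}$ --- combined with the uniform convergence then yields (b).

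For (d), I would differentiate the series termwise. The only $t$-dependence of $Q^{(n)}_{\alpha_2\alpha_1}(t;\mathbf{B})$ is the outer integration limit and the argument of the outermost $\Sigma$-factor. The boundary contribution at $t_1=t$ collapses that factor to the scale embedding $\mathcal{K}_{\beta_{n-1}}\hookrightarrow\mathcal{K}_{\alpha_2}$ and produces $\mathbf{B}_{\alpha_2\beta_{n-1}}Q^{(n-1)}_{\beta_{n-1}\alpha_1}(t;\mathbf{B})$, while the interior contribution puts $(A^\Delta_\upsilon)^\odot$ in front of the outermost $\Sigma$. Choosing any intermediate $\alpha_3\in(\alpha_1,\alpha_2)$ with $t<T(\alpha_3,\alpha_1;\mathbf{B})$ and observing that $Q_{\alpha_3\alpha_1}(t;\mathbf{B})k\in\mathcal{K}_{\alpha_3}\subset\mathcal{D}^\Delta_{\alpha_2}$, the compatibility identity (\ref{29}) lets me reinterpret $(A^\Delta_\upsilon)^\odot$ as the bounded operator $(A^\Delta_\upsilon)_{\alpha_2\alpha_3}$; resumming then produces (\ref{54b}). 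Specializing $\mathbf{B}=B^\Delta_\upsilon$ with $L^\Delta=A^\Delta_\upsilon+B^\Delta_\upsilon$ gives the first identity in (\ref{54a}); the $B^\Delta_{2,\upsilon}$-case is a direct transcription.

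The principal obstacle is the re-indexing needed to identify, after extracting the outermost factor from the termwise derivative, the residual tail of the Dyson series for $Q_{\alpha_2\alpha_1}$ with the full Dyson series for $Q_{\alpha_3\alpha_1}$, in the operator norm of $\mathcal{L}(\mathcal{K}_{\alpha_1},\mathcal{K}_{\alpha_3})$. This requires reconciling two different uniform subdivisions (of $[\alpha_1,\alpha_2]$ and $[\alpha_1,\alpha_3]$) through a refinement/limit argument together with Fubini and dominated convergence; the required dominating majorant is exactly the geometric-series bound above. A related delicate point is the allocation of scale budget between the $\Sigma$- and $\mathbf{B}$-blocks so that the effective convergence radius equals the full $T(\alpha_2,\alpha_1;\mathbf{B})$ of (\ref{53}) rather than a constant fraction of it; once this calibration is done, all remaining steps are direct transcriptions of the semigroup-perturbation argument in (\ref{L14})--(\ref{L20}) combined with the sun-dual theory summarized in Proposition \ref{Papn}.
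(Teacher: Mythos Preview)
Your approach---build $Q$ as a Dyson series using the sun-dual propagators $\Sigma$, bound the terms geometrically, and differentiate termwise---is exactly the paper's. The two ``delicate points'' you flag are the only places where the argument needs more than transcription, and the paper handles them as follows.

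\emph{Scale allocation.} Your uniform grid cannot work as written: by Proposition~\ref{Papn} the map $t\mapsto\Sigma_{\alpha'\alpha}(t)$ is strongly continuous only when $\alpha<\alpha'$, so every $\Sigma$-factor must consume a strictly positive piece of scale, yet your $n$ insertions $\mathbf{B}_{\beta_l\beta_{l-1}}$ already exhaust the full interval $[\alpha_1,\alpha_2]$. The paper's fix is to introduce an auxiliary $\delta\in(0,\alpha_2-\alpha_1)$ and, for the $l$-th term, use a $(2l{+}1)$-point partition (equation~(\ref{55})) that gives each of the $l$ operators $\mathbf{B}$ a step of length $(\alpha_2-\alpha_1-\delta)/l$ and each of the $l{+}1$ propagators $\Sigma$ a step of length $\delta/(l{+}1)$. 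The resulting bound is $\|\Pi^{(l)}\|\le(l/(eT_\delta))^l$ with $T_\delta=(\alpha_2-\alpha_1-\delta)/\varpi(\alpha_2;\mathbf{B})$; summing gives convergence for $t<T_\delta$, and since the operator $\Pi^{(l)}$ (hence $Q$) is independent of the partition one lets $\delta\downarrow 0$ to recover the full radius $T(\alpha_2,\alpha_1;\mathbf{B})$ and the bound in~(c).

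\emph{Re-indexing.} What you call the ``principal obstacle'' is in fact not one: the composite $\Pi^{(l)}_{\alpha\alpha_1}(t,t_1,\dots,t_l;\mathbf{B})\in\mathcal{L}(\mathcal{K}_{\alpha_1},\mathcal{K}_\alpha)$ does not depend on the intermediate partition points, because every factor $\Sigma_{\alpha'\alpha''}(s)$ and $\mathbf{B}_{\alpha'\alpha''}$ is a restriction of a single underlying operator, and these restrictions are consistent by (\ref{51}) and (\ref{29}). Hence no refinement or limit argument is needed; the boundary contribution at $t_1=t$ in $\partial_t\int_0^t\cdots\Pi^{(l)}$ is directly $\mathbf{B}$ applied to the $(l{-}1)$-st term of the same series, and summing yields (\ref{54b}).
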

\begin{proof}
Fix some $T< T(\alpha_2, \alpha_1;\mathbf{B})$ and then take $\alpha
\in (\alpha_1, \alpha_2]$  and positive $\delta < \alpha- \alpha_1$
such that
$$T< T_\delta:= \frac{\alpha - \alpha_1 - \delta}{\beta(\alpha_2;\mathbf{B})}.$$
Then take some $l\in \mathds{N}$ and divide $[\alpha_1, \alpha]$
into $2l+1$ subintervals in the following way: $\alpha_1=\alpha^0$,
$\alpha = \alpha^{2l+1}$ and
\begin{equation}
  \label{55}
  \alpha^{2s}=\alpha_1+\frac{s}{l+1}\delta + s \epsilon, \qquad \alpha^{2s+1}=\alpha_1+\frac{s+1}{l+1}\delta + s \epsilon,
\end{equation}
where $\epsilon = (\alpha - \alpha_1 - \delta)/l$ and $s=0, 1, ...,
l$. Now for $0\leq t_l \leq t_{l-1}\cdots \leq t_1 \leq t_0 := t$,
define
\begin{eqnarray}
  \label{56}
  \Pi^{(l)}_{\alpha \alpha_1}(t, t_1, t_2, ... , t_l;\mathbf{B}) & = & \Sigma_{\alpha \alpha^{2l}}(t-t_1)\mathbf{B}_{\alpha^{2l}\alpha^{2l-1}}
  \cdots \Sigma_{\alpha^{2s+1}
  \alpha^{2s}}(t_{l-s}-t_{l-s+1})\mathbf{B}_{\alpha^{2s}\alpha^{2s-1}}
  \qquad
 \nonumber \\[.2cm]  & \times & \Sigma_{\alpha^3 \alpha^{2}}(t_{l-1}-t_l)\mathbf{B}_{\alpha^{2}\alpha^{1}}\Sigma_{\alpha^1 \alpha_1}(t_l).
\end{eqnarray}
By the very construction we have that $ \Pi^{(l)}_{\alpha
\alpha_1}(t, t_1, t_2, ... , t_l;\mathbf{B}) \in
\mathcal{L}(\mathcal{K}_\alpha , \mathcal{K}_{\alpha_1})$, and the
map
$$(t,t_1,...,t_l) \mapsto \Pi^{(l)}_{\alpha \alpha_1}(t, t_1, t_2, ... , t_l;\mathbf{B})$$
is continuous (Proposition \ref{Papn} and the fact that each
$\mathbf{B}_{\alpha^{2s}\alpha^{2s-1}}$ is bounded). Moreover, by
(\ref{50}) and (\ref{48})  we have
\begin{eqnarray}
  \label{56a}
\| \Pi^{(l)}_{\alpha \alpha_1}(t, t_1, t_2, ... , t_l;\mathbf{B})\|
& \leq &  \prod_{s=0}^l \|\mathbf{B}_{\alpha^{2s}\alpha^{2s-1}} \|
\leq  \prod_{s=0}^l
\frac{\varpi(\alpha^{2s};\mathbf{B})}{e(\alpha^{2s}-\alpha^{2s-1})}
\\[.2cm] \nonumber &  \leq & \left(\frac{l\upsilon(\alpha_2;\mathbf{B})}{e(\alpha - \alpha_1-\delta)}
\right)^l \leq \left(\frac{l}{eT_\delta}\right)^l.
\end{eqnarray}
By (\ref{51}) we also have that
\[
\Sigma_{\alpha^{2s+1} \alpha^{2s}}(t_{l-s}-t_{l-s+1})=
\Sigma_{\alpha^{2s+1}
\alpha^{2s}}(0)S^\odot_{\alpha^{2s}}(t_{l-s}-t_{l-s+1}). \] Taking
the derivative of both sides of the latter we obtain
\begin{eqnarray*}
\frac{d}{dt}\Sigma_{\alpha^{2s+1}
\alpha^{2s}}(t)=(A_\upsilon^{\Delta})_{\alpha^{2s+1} \alpha''}
\Sigma_{\alpha'' \alpha^{2s}}(t) =
(A_\upsilon^{\Delta})_{\alpha^{2s+1}} \Sigma_{\alpha^{2s+1}
\alpha^{2s}}(t),
\end{eqnarray*}
holing for each $\alpha''\in (\alpha^{2s}, \alpha^{2s+1})$. Here
$(A_\upsilon^{\Delta})_{\alpha}$ stands for the unbounded operator
defined in (\ref{37}). Then we obtain from (\ref{56})  the following
\begin{eqnarray}
  \label{57}
\frac{d}{dt}  \Pi^{(l)}_{\alpha \alpha_1}(t, t_1, t_2, ... ,
t_l;\mathbf{B}) & = & (A^{\Delta}_\upsilon)_{\alpha \alpha'}
\Pi^{(l)}_{\alpha' \alpha_1}(t, t_1, t_2, ... , t_l;\mathbf{B})
\\[.2cm]\nonumber & = & (A^{\Delta}_\upsilon)_{\alpha}
\Pi^{(l)}_{\alpha \alpha_1}(t, t_1, t_2, ... , t_l;\mathbf{B}).
\end{eqnarray}
Now we set
\begin{equation}
  \label{58}
Q_{\alpha \alpha_1}(t;\mathbf{B}) = \Sigma_{\alpha \alpha_1}(t)+
\sum_{l=1}^{\infty}\int_{0}^{t}
\int_0^{t_1}...\int_0^{t_{l-1}}\Pi^{(l)}_{\alpha \alpha_1}(t, t_1,
t_2, ... , t_l;\mathbf{B})dt_l...dt_1.
\end{equation}
By (\ref{56a}) the series in (\ref{58}) converges uniformly of
compact subsets of $[0,T_\delta)$, which proves claims (a) and (b).
The estimate in (c) follows directly from (\ref{56a}). Finally,
(\ref{54a}) follows by (\ref{57}), cf. (\ref{L20}).\end{proof}

By solving (\ref{54b}) with the initial condition $Q_{\alpha_2
\alpha_1}(t+s;\mathbf{B})|_{t=0} = Q_{\alpha_2
\alpha_1}(s;\mathbf{B})$ we obtain the following `semigroup'
property of the family $\lbrace Q_{\alpha_2 \alpha_1}(t;\mathbf{B}):
(\alpha_1, \alpha_2,t) \in \mathcal{A}(\mathbf{B}) \rbrace$.
\begin{corollary}
  \label{JKco}
For each $\alpha\in (\alpha_1, \alpha_2)$ and $t,s>0$ such that
\[
s< T(\alpha, \alpha_1;\mathbf{B}), \quad t < T(\alpha_2,
\alpha;\mathbf{B}), \quad t+s < T(\alpha_2, \alpha_1;\mathbf{B}),
\]
the following holds
\[
Q_{\alpha_2 \alpha_1}(t+s;\mathbf{B}) = Q_{\alpha_2
\alpha}(t;\mathbf{B})Q_{\alpha\alpha_1}(s;\mathbf{B}).
\]
\end{corollary}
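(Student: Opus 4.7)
The plan is to exhibit both sides of the asserted identity as solutions of a common Cauchy problem in $\mathcal{K}_{\alpha_2}$ and conclude by uniqueness. Fix an arbitrary $k_0 \in \mathcal{K}_{\alpha_1}$ and set
\[
v(\tau) := Q_{\alpha_2\alpha_1}(\tau+s;\mathbf{B})\,k_0, \qquad w(\tau) := Q_{\alpha_2\alpha}(\tau;\mathbf{B})\,Q_{\alpha\alpha_1}(s;\mathbf{B})\,k_0,
\]
both taking values in $\mathcal{K}_{\alpha_2}$ on the intervals $[0, T(\alpha_2,\alpha_1;\mathbf{B})-s)$ and $[0, T(\alpha_2,\alpha;\mathbf{B}))$, respectively. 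By Lemma \ref{3lm}(d) together with the chain rule, both $v$ and $w$ satisfy $\dot u(\tau) = L^\Delta_{\alpha_2} u(\tau)$ in the case $\mathbf{B} = B^\Delta_\upsilon$; in the case $\mathbf{B} = B^\Delta_{2,\upsilon}$ the same argument applies with $L^\Delta_{\alpha_2}$ replaced by $(A^\Delta_\upsilon)_{\alpha_2} + (B^\Delta_{2,\upsilon})_{\alpha_2}$.

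Next I need to match $v(0) = w(0)$. Since $Q_{\alpha_2\alpha}(0;\mathbf{B})$ is the canonical embedding $\mathcal{K}_\alpha \hookrightarrow \mathcal{K}_{\alpha_2}$, this reduces to the consistency identity $Q_{\alpha_2\alpha_1}(s;\mathbf{B})\,k_0 = Q_{\alpha\alpha_1}(s;\mathbf{B})\,k_0$ holding in $\mathcal{K}_{\alpha_2}$. I would establish this by the same uniqueness principle applied one level down: the $\mathcal{K}_{\alpha_2}$-valued maps $\sigma \mapsto Q_{\alpha_2\alpha_1}(\sigma;\mathbf{B})k_0$ and $\sigma \mapsto Q_{\alpha\alpha_1}(\sigma;\mathbf{B})k_0$ both solve $\dot u = L^\Delta_{\alpha_2} u$ on $[0, T(\alpha,\alpha_1;\mathbf{B}))$ with the common initial value $u(0) = k_0$, so Lemma \ref{1lm} forces them to coincide on that interval, and in particular at $\sigma = s$.

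With the initial data matched, $v$ and $w$ are two solutions in $\mathcal{K}_{\alpha_2}$ of the same Cauchy problem, now starting from $Q_{\alpha\alpha_1}(s;\mathbf{B})\,k_0 \in \mathcal{K}_\alpha \subset \mathcal{K}_{\alpha_2}$. A second application of Lemma \ref{1lm}, this time with $\mathcal{K}_\alpha$ in the role of the source space of initial data, yields $v(\tau) = w(\tau)$ throughout the common interval; specializing to $\tau = t$ and noting that $k_0$ is arbitrary gives the operator identity. The only delicate point is the consistency step: it requires invoking Lemma \ref{1lm} with two different choices of source space in the scale $\{\mathcal{K}_\alpha\}$, which is precisely the flexibility afforded by the scale-of-Banach-spaces construction used in Lemma \ref{3lm}; everything else is the standard composition-of-semigroups argument transported to this setting.
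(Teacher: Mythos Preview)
Your proposal is correct and follows the same approach the paper indicates in the one-sentence remark preceding the corollary: both sides satisfy the differential relation (\ref{54b}) with the same initial value, so they agree by the uniqueness part of Lemma \ref{1lm} (or its obvious analogue for $\mathbf{B}=B^\Delta_{2,\upsilon}$). You are in fact more careful than the paper, since you make explicit the consistency step $Q_{\alpha_2\alpha_1}(s;\mathbf{B})k_0 = Q_{\alpha\alpha_1}(s;\mathbf{B})k_0$ in $\mathcal{K}_{\alpha_2}$, which the paper leaves implicit; this step is indeed justified by (\ref{26}), (\ref{29}) and a second invocation of uniqueness, exactly as you outline.
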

\begin{remark}
 \label{Jan10rk}
Since $B^\Delta_{2,\upsilon}$ is positive, by (\ref{56}) we obtain
that $Q_{\alpha_2
\alpha_1}(t;B^\Delta_{2,\upsilon}):\mathcal{K}_{\alpha_1}^+
\to\mathcal{K}_{\alpha_2}^+$.  This positivity will be used to
continue $k_t$ to all $t>0$. It is the only reason for us to use
$Q_{\alpha_2 \alpha_1}(t;B^\Delta_{2,\upsilon})$ since
$B^\Delta_{\upsilon}$ is not positive, and hence the positivity of
$Q_{\alpha_2 \alpha_1}(t;B^\Delta_{\upsilon})$ cannot be secured.
\end{remark}

\noindent {\it Proof of Lemma \ref{1lm}.} Set
\begin{equation}
  \label{Jan3}
Q_{\alpha_2 \alpha_1}(t) = Q_{\alpha_2
\alpha_1}(t;B^\Delta_\upsilon), \qquad t < T(\alpha_2 ,
\alpha_1;B^\Delta_\upsilon) = T(\alpha_2 , \alpha_1)
\end{equation}
Then the solution in question is obtained by setting $k_t =
Q_{\alpha_2 \alpha_1}(t) k_0$, which definitely satisfies (\ref{33})
by (\ref{54a}) and (\ref{51}). Its uniqueness can be proved as in
the proof of Lemma 4.8 in \cite{KK}. \hfill{$\square$}

Before proceeding further, we prove some corollary of Lemma \ref{3lm}
related to the predual evolution in $\mathcal{G}_\alpha$, see
(\ref{35}). Let $S_\alpha$ be the semigroup as in Lemma \ref{2lm}.
For $\alpha'
>\alpha$, let $S_{\alpha \alpha'}(t)$ be the restriction of
$S_\alpha(t)$ to $\mathcal{G}_{\alpha'} \hookrightarrow
\mathcal{G}_\alpha$. Along with the operators defined in (\ref{37})
we consider the predual operators to $B^\Delta_\upsilon$,  see
(\ref{21}) and (\ref{46}). That is, they act
\begin{eqnarray*}
(B_1 G) (\eta)& =&  - \sum_{x\in \eta} G(\eta\setminus x) E^a (x,
\eta\setminus x), \\[.2cm] \nonumber
(B_{2,\upsilon} G) (\eta)& =& 2 \int_{(\mathds{R}^d)^2} \sum_{x\in
\eta} G(\eta\setminus x \cup y_1) b(x|y_1, y_2) d y_1 d y_2 +
\upsilon |\eta| G(\eta).
\end{eqnarray*}
By means of these expressions we can define bounded operators acting
from $\mathcal{G}_{\alpha}$ to $\mathcal{G}_{\alpha'}$ for $\alpha'<
\alpha$. It turns out that the estimate of the norm is exactly as in (\ref{47}),
that is,
\begin{equation*}
\|(B_\upsilon)_{\alpha' \alpha} \|= \frac{2 \langle b \rangle +
\upsilon + \langle a \rangle e^{\alpha'}}{e(\alpha - \alpha')}.
\end{equation*}
Recall that $\mathcal{A}(B^\Delta_\upsilon)$ is defined in
(\ref{54}). For $(\alpha_2, \alpha_1, t)\in
\mathcal{A}(B^\Delta_\upsilon)$, let $T<T(\alpha_2, \alpha_1)$ be
fixed. Pick  $\alpha\in [\alpha_1,\alpha_2)$ and $\delta<
\alpha_2-\alpha$ such that $T< T(\alpha_2, \alpha +\delta)$. Then,
for some $l\in \mathds{N}$, set, cf. (\ref{55}),
\begin{equation*}
\alpha_{2s} = \alpha_2 - \frac{s}{l+1}\delta - s \epsilon, \quad
\alpha^{2s+1} = \alpha_2 - \frac{s+1}{l+1}\delta - s \epsilon,
\end{equation*}
where $\epsilon = (\alpha_2 - \alpha - \delta)/ l$. For $0\leq
t_l\leq \cdots \leq t_1 \leq t_0:=t$ we then define, cf. (\ref{56}),
\begin{eqnarray*}
\Omega^{(l)}_{\alpha\alpha_2} (t, t_1 , \dots , t_n) & = & S_{\alpha
\alpha^{2l}} (t-t_1) (B_\upsilon)_{\alpha^{2l}\alpha^{2l-1}}
S_{\alpha^{2l-1}\alpha^{2l-2}} (t_1-t_2) \times \\[.2cm] & \times &
S_{\alpha^3\alpha^2}(t_{l-1}- t_l)
(B_\upsilon)_{\alpha^{2}\alpha^{1}} S_{\alpha^1\alpha_2}(t_{l}) .
\nonumber
\end{eqnarray*}
Set
\begin{equation}
  \label{Du4}
H_{\alpha \alpha_2} (t) = S_{\alpha\alpha_2}(t) + \sum_{l=1}^\infty
\int_0^t \int_0^{t_1}\cdots \int_0^{t_{l-1}}
\Omega^{(l)}_{\alpha\alpha_2} (t, t_1 , \dots , t_n) d t_l d t_{l-1}
\cdots d t_1.
\end{equation}
Then exactly as in the case of Lemma \ref{3lm} we prove the
following statement.
\begin{proposition}
  \label{Du1pn}
Each member of the family of operators $\{H_{\alpha\alpha_2}(t):
(\alpha_2 , \alpha, t)\in \mathcal{A}(B^\Delta_\upsilon)\}$ defined
in (\ref{Du4}) has the following properties:
\begin{itemize}
  \item[(a)] $H_{\alpha\alpha_2}(t) \in
\mathcal{L}(\mathcal{G}_{\alpha_2}, \mathcal{G}_{\alpha})$, the
operator norm of which satisfies
\[
\|H_{\alpha\alpha_2}(t)\| \leq
\frac{T(\alpha_2,\alpha)}{T(\alpha_2,\alpha)-t};
\]
\item[(b)] For each $k\in \mathcal{K}_\alpha$ and $G\in
\mathcal{G}_{\alpha_2}$, it follows that
\begin{equation}
  \label{Du5}
\langle \! \langle G, Q_{\alpha_2 \alpha}(t) k\rangle \!\rangle =
\langle \! \langle H_{\alpha \alpha_2}(t) G, k\rangle \!\rangle.
\end{equation}
\end{itemize}

\end{proposition}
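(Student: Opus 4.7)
The plan is to prove this proposition in parallel with Lemma \ref{3lm}, with $\mathcal{K}_\alpha$, $S^\odot_\alpha$, $\mathbf{B}_{\alpha\alpha'}$, $\Pi^{(l)}$ and $Q_{\alpha_2\alpha_1}(t)$ replaced by their pre-dual counterparts $\mathcal{G}_\alpha$, $S_\alpha$, $(B_\upsilon)_{\alpha'\alpha}$, $\Omega^{(l)}$ and $H_{\alpha\alpha_2}(t)$. Claim (a) is then a direct repetition of the arguments used to establish items (a)--(c) of Lemma \ref{3lm}. Indeed, by the sub-stochasticity of $S_\alpha$ recorded in (\ref{39J}) the restrictions satisfy $\|S_{\alpha\alpha'}(t)\|\le 1$ for all $\alpha'>\alpha$ and $t\ge 0$, while the paper explicitly notes that $\|(B_\upsilon)_{\alpha'\alpha}\|$ obeys exactly the same bound (\ref{47}) as $\|\mathbf{B}_{\alpha\alpha'}\|$. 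Hence the $l$-th summand $\Omega^{(l)}_{\alpha\alpha_2}$ admits the estimate $(l/(eT_\delta))^l$ obtained in (\ref{56a}), and the series (\ref{Du4}) converges uniformly on compact subsets of $[0,T(\alpha_2,\alpha))$, yielding the boundedness, the continuity in $t$, and the norm bound in (a).

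For (b), the strategy is to establish the pairing identity term by term in the series (\ref{58}) and (\ref{Du4}), using the two elementary duality relations
\begin{equation*}
\langle\!\langle G, \Sigma_{\alpha''\alpha'}(s)k\rangle\!\rangle = \langle\!\langle S_{\alpha'\alpha''}(s)G, k\rangle\!\rangle, \qquad \langle\!\langle G, \mathbf{B}_{\alpha''\alpha'}k\rangle\!\rangle = \langle\!\langle (B_\upsilon)_{\alpha'\alpha''}G, k\rangle\!\rangle,
\end{equation*}
valid for $\alpha'<\alpha''$, $G\in\mathcal{G}_{\alpha''}$, $k\in\mathcal{K}_{\alpha'}$. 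The first relation follows from Proposition \ref{Papn}: since $\mathcal{K}_{\alpha'}\subset \mathcal{D}^*_{\alpha''}$ by (\ref{43}), one has $\Sigma_{\alpha''\alpha'}(s)k = S^\odot_{\alpha''}(s)k = S^*_{\alpha''}(s)k$, and then the standard adjoint identity gives the equality. The second is a direct consequence of the formulas defining $B_1^\Delta$, $B^\Delta_{2,\upsilon}$, $B_1$ and $B_{2,\upsilon}$, together with the exchange identity (\ref{18}); indeed the pre-dual pair $(B_\upsilon)_{\alpha'\alpha''}$ is defined precisely so that this adjoint relation holds.

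Choosing the same subdivision in $\Pi^{(l)}_{\alpha_2\alpha}$ and in $\Omega^{(l)}_{\alpha\alpha_2}$ (so that the nodes $\gamma^s$ of the latter are the reversal $\beta^{2l+1-s}$ of those of the former), one applies the two duality relations alternately and iteratively, moving one factor at a time from the right slot of $\langle\!\langle\cdot,\cdot\rangle\!\rangle$ to the left slot, to obtain
\begin{equation*}
\langle\!\langle G, \Pi^{(l)}_{\alpha_2\alpha}(t,t_1,\dots,t_l)k\rangle\!\rangle = \langle\!\langle \Omega^{(l)}_{\alpha\alpha_2}(t,t_1,\dots,t_l)G, k\rangle\!\rangle.
\end{equation*}
Summing over $l$ and integrating over the simplex in the $t_j$'s — where the exchange of pairing with sum and integral is justified by the uniform bounds on $\|\Pi^{(l)}\|$ and $\|\Omega^{(l)}\|$ from (a) and Lemma \ref{3lm} — produces the identity (\ref{Du5}).

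The main technical delicacy I expect is the compatibility of the semigroups $S_\alpha$ across the scale: one needs to verify that $S_{\alpha''\alpha'}(s)G$, read both as the restriction of $S_{\alpha'}(s)$ to the subspace $\mathcal{G}_{\alpha''}\hookrightarrow\mathcal{G}_{\alpha'}$ and as $S_{\alpha''}(s)G$ viewed inside $\mathcal{G}_{\alpha'}$, actually define the same element. This is not automatic but follows from the fact that all $S_\alpha$ arise via the same Dyson-type expansion from the multiplication semigroup $S_{0,\alpha}$ (which is compatible with all embeddings by (\ref{38})) and the fixed operator $A_2$. Once this consistency is noted, everything else is the bookkeeping already carried out for Lemma \ref{3lm}.
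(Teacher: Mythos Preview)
Your proposal is correct and matches the paper's approach: the paper's entire proof is the single sentence ``Then exactly as in the case of Lemma \ref{3lm} we prove the following statement,'' and you have carried out precisely that parallel construction, supplying the details for both (a) and (b). In particular, your term-by-term duality argument for (b) (including the observation about reversing the subdivision and the compatibility of the semigroups $S_\alpha$ across the scale) is the natural way to flesh out what the paper leaves implicit.
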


\section{The Identification Lemma}

\label{Sec5}

Our aim now is to prove that the solution obtained in Lemma
\ref{1lm} has the property $k_t=k_{\mu_t}$ for a unique $\mu_t \in
\mathcal{P}_{\rm exp}(\Gamma)$. We call this \emph{identification}
since it allows us to identify the mentioned solutions as the
correlation functions of sub-Poissonian states.

Recall that $\upsilon$ and $\omega$ appear in Proposition \ref{2pn}
and $\mathcal{K}^\star_\alpha$ is defined in (\ref{32}).
\begin{lemma}[Identification]
  \label{ILlm}
For each $\alpha_2>\alpha_1>-\log \omega$, it follows that  $Q_{\alpha_2 \alpha_1}(t)=Q_{\alpha_2
\alpha_1}(t;B_\upsilon^\Delta): \mathcal{K}_{\alpha_1}^\star \to
\mathcal{K}_{\alpha_2}^\star$ for all $t \in [0, \tau (\alpha_2,
\alpha_1)]$ with $\tau(\alpha_2, \alpha_1) = T(\alpha_2, \alpha_1)/3$.
\end{lemma}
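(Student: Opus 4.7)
The plan is to realize the abstract solution $k_t = Q_{\alpha_2 \alpha_1}(t) k_0$ of Lemma \ref{1lm} as a limit of correlation functions of probability measures evolved by the finite-system dynamics of Theorem \ref{1ftm}. Since correlation functions of probability measures on $\Gamma_0$ automatically satisfy condition (a) of Proposition \ref{1pn}, this membership will be inherited in the limit provided one can control the relevant pairings $\langle\!\langle G,\cdot\rangle\!\rangle$ uniformly along the approximation.

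The concrete steps are as follows. First, starting from $k_0 \in \mathcal{K}^\star_{\alpha_1}$, use Proposition \ref{1pn} to obtain $\mu_0 \in \mathcal{P}_{\rm exp}(\Gamma)$ with $k_{\mu_0}=k_0$, and then for a compact $\Lambda \subset \mathds{R}^d$ consider its projection $\mu_0^\Lambda$, viewed as an element of $\mathcal{P}(\Gamma_0) \subset \mathcal{D}$ via $\Gamma_\Lambda \hookrightarrow \Gamma_0$. Its correlation function $k_0^\Lambda$ is obtained from $k_0$ by the truncation formula in (\ref{13}) and satisfies $k_0^\Lambda \to k_0$ pointwise and in $\mathcal{K}_{\alpha_1}$-norm as $\Lambda \uparrow \mathds{R}^d$. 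Second, apply Theorem \ref{1ftm} to evolve $\mu_0^\Lambda \mapsto \mu_t^\Lambda \in \mathcal{P}(\Gamma_0)$, and define $k_t^\Lambda := k_{\mu_t^\Lambda}$. Using the identity $\mu(LF^\theta) = \int_{\Gamma_0}(L^\Delta k_\mu)(\eta) e(\theta;\eta) \lambda(d\eta)$ from (\ref{20}) together with the Fokker–Planck property $\dot{\mu}_t^\Lambda = L^* \mu_t^\Lambda$, verify that $k_t^\Lambda$ solves the same Cauchy problem (\ref{33}) (with $k_0$ replaced by $k_0^\Lambda$). By the uniqueness part of Lemma \ref{1lm} and the semigroup property in Corollary \ref{JKco}, this forces $k_t^\Lambda = Q_{\alpha_2 \alpha_1}(t) k_0^\Lambda$ on the relevant scale, once the right a priori bound on $k_t^\Lambda$ is established.

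Third, and this is the heart of the argument, obtain a uniform-in-$\Lambda$ bound $\|k_t^\Lambda\|_{\alpha'} \leq C$ on the restricted interval $[0,\tau(\alpha_2,\alpha_1)]$ with $\alpha' \leq \alpha_2$. This is where the positive auxiliary evolution from Remark \ref{Jan10rk} enters: since $B^\Delta_{2,\upsilon}$ is positive and dominates $L^\Delta + (\text{sign-control terms})$ in a Duhamel comparison, one can control $|k_t^\Lambda|$ pointwise by $Q_{\alpha_2\alpha_1}(t;B^\Delta_{2,\upsilon})|k_0^\Lambda|$. The factor $1/3$ in $\tau(\alpha_2,\alpha_1) = T(\alpha_2,\alpha_1)/3$ comes from splitting $[\alpha_1,\alpha_2]$ into three subintervals of equal width in order to invoke three successive ingredients on disjoint scale budgets: (i) the predual representation of Proposition \ref{Du1pn}, (ii) the positive dominating evolution, and (iii) the operator-norm estimate (\ref{47}) that feeds the series (\ref{58}). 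Finally, pass to the limit $\Lambda \uparrow \mathds{R}^d$: for $G \in B^\star_{\rm bs}(\Gamma_0)$, rewrite
\begin{equation*}
\langle\!\langle G, k_t^\Lambda\rangle\!\rangle = \langle\!\langle H_{\alpha_1 \alpha_2}(t) G, k_0^\Lambda\rangle\!\rangle,
\end{equation*}
using (\ref{Du5}), and apply dominated convergence (justified by Proposition \ref{Du1pn}(a) and the uniform bound on $k_0^\Lambda$) to deduce $\langle\!\langle G, k_t\rangle\!\rangle = \lim_\Lambda \langle\!\langle G, k_t^\Lambda\rangle\!\rangle \geq 0$. Combined with $k_t(\varnothing) = 1$ (preserved since $L^\Delta k(\varnothing) = 0$) and $\|k_t\|_{\alpha_2} < \infty$ from Lemma \ref{1lm}, this yields $k_t \in \mathcal{K}^\star_{\alpha_2}$.

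The main obstacle I expect is Step three: converting the sign-indefinite action of $L^\Delta$ on $k_t^\Lambda$ into a usable pointwise domination by the positive evolution $Q_{\alpha_2\alpha_1}(t;B^\Delta_{2,\upsilon})$, while simultaneously matching the scale budget so that the final bound lives in $\mathcal{K}_{\alpha_2}$ rather than in a strictly larger space. The need to reserve scale for both the sub-stochastic semigroup $S^\odot$ and the perturbation series is what shrinks the time of validity from $T(\alpha_2,\alpha_1)$ to $\tau(\alpha_2,\alpha_1) = T(\alpha_2,\alpha_1)/3$; the rest of the argument (weak limit, transfer of positivity, checking normalization) is conceptually straightforward once this uniform control is in place.
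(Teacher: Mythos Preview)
Your overall strategy---approximate by finite-state evolutions that automatically carry the positivity of Proposition \ref{1pn}, then pass to the limit---is the paper's strategy too. But there is a genuine gap at the identification $k_t^\Lambda = Q_{\alpha_2\alpha_1}(t)k_0^\Lambda$, and it is exactly this gap that forces the paper to add a layer of approximation you have left out. The correlation function $k_t^\Lambda$ produced by the stochastic semigroup of Theorem \ref{1ftm} is naturally an element of an $L^1$-type space $\mathcal{G}_\vartheta$ (via $q_t(\eta)=\int R_t(\eta\cup\xi)\lambda(d\xi)$), whereas $Q_{\alpha_2\alpha_1}(t)k_0^\Lambda$ lives in the $L^\infty$-type space $\mathcal{K}_{\alpha_2}$. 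These scales do not overlap in any useful way: a generic element of $\mathcal{K}_\alpha$ lies in no $\mathcal{G}_\vartheta$, since $\int_{\Gamma_0} e^{(\alpha+\vartheta)|\eta|}\lambda(d\eta)=\infty$. Hence neither the uniqueness of Lemma \ref{1lm} (in $\mathcal{K}$) nor that of Lemma \ref{Janln} (in $\mathcal{G}$) can be invoked until both objects are known to sit in a common space. Your plan to obtain an a priori $\mathcal{K}$-bound on $k_t^\Lambda$ by domination with $Q(t;B^\Delta_{2,\upsilon})$ is circular: that comparison (Lemma \ref{complm}) is a statement about the $Q$-evolution itself, and applying it presupposes the identification you are trying to establish.

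The paper bridges the two scales by a further regularization $b\mapsto b_\sigma(x|y_1,y_2)=b(x|y_1,y_2)\phi_\sigma(y_1)\phi_\sigma(y_2)$ with $\phi_\sigma(x)=e^{-\sigma|x|^2}$, together with an intermediate Banach space $\mathcal{U}_{\sigma,\alpha}$ (norm weighted by $e(\phi_\sigma;\eta)^{-1}$) that embeds continuously into \emph{both} $\mathcal{K}_\alpha$ and every $\mathcal{G}_\vartheta$; see (\ref{Jan4}) and (\ref{x}). An additional $N$-truncation of the initial density guarantees $q_0^{\Lambda,N}\in\mathcal{U}_{\sigma,\alpha}$. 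The identification is then carried out inside $\mathcal{U}_{\sigma,\alpha}$ for the $\sigma$-model (Corollaries \ref{U1co}, \ref{Jan2co}, \ref{JJ2co}), after which one removes $\Lambda,N$ (Proposition \ref{JJ10pn}) and finally $\sigma$ (Lemma \ref{JJ10lm}). The factor $1/3$ in $\tau(\alpha_2,\alpha_1)$ actually comes from this last step: the Duhamel comparison (\ref{Jan43}) between $Q$ and $Q^\sigma$ requires three nested scale slots $\alpha_1<\alpha<\alpha'<\alpha_2$ as in (\ref{Jan41}), not from the domination argument you describe. A minor side remark: the claim that $k_0^\Lambda\to k_0$ in $\mathcal{K}_{\alpha_1}$-norm is false in general (take translation-invariant $k_0$); only pointwise convergence with a uniform bound holds, which is what the weak limit via (\ref{Du5}) actually uses.
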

The proof consists in the following steps:
\begin{itemize}
  \item[(i)] constructing an approximation $k_t^{\rm app}$ of $k_t = Q_{\alpha_2 \alpha_1}(t)k_{0}$, $k_0\in \mathcal{K}_{\alpha_1}^\star$, such that
 $\langle \! \langle G, k_t^{\rm app}\rangle \! \rangle \geq 0$ for
 all $G\in B^\star_{\rm bs}(\Gamma_0)$;
 \item[(ii)] proving that $\langle \! \langle G, k_t^{\rm app}\rangle \!
 \rangle \to \langle \! \langle G, k_t\rangle \! \rangle$ as the
 approximation is eliminated.
\end{itemize}
\begin{figure}[h]
    \centering
    \includegraphics[scale=0.23]{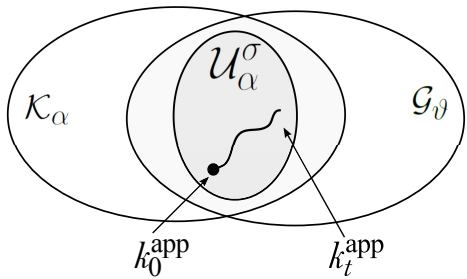}
    \caption{The evolution in spaces}
\end{figure}
Fig. 1 provides an illustration to the idea of how to realize step
(i). The origin of the inequality in question is in (\ref{15}) and
(\ref{16}). To relate $k_t$ with a positive measure one uses local
approximations of $\mu_0$, the densities of which (not necessarily
normalized) evolve $R^{\rm app}_0 \to R^{\rm app}_t$  in $L^1$-like
spaces according to Theorem \ref{1tm}. These approximations are
tailored in such a way that the corresponding correlation functions
(\ref{13}) (that have the desired property by construction) also
evolve $q_0^{\rm app}\to q_t^{\rm app}$ in $L^1$-like spaces
$\mathcal{G}_\vartheta$. The technique developed in Sect. \ref{Sec4}
allows for proving that $\langle \! \langle G, k_t^{\rm app}\rangle
\! \rangle$ converges to $\langle \! \langle G, k_t\rangle \!
\rangle$ only if $k_t^{\rm app}= Q_{\alpha \alpha_0}(t) q_0^{\rm
app}$. That is, at this stage there is no connection between the
evolutions $q_0^{\rm app}\to q_t^{\rm app}$ and $q_0^{\rm app}\to
k_t^{\rm app}$ as they take place in (different) spaces,
$\mathcal{G}_\vartheta$ and $\mathcal{K}_\alpha$, respectively. It
turns out, that these spaces have an intersection
$\mathcal{U}_\alpha^\sigma$ constructed with the help of some
objects dependent on a para,eter, $\sigma>0$. To  employ this fact
we use auxiliary models (indexed by $\sigma$), for which we prove
that both evolutions $q_0^{\rm app}\to q_t^{\rm app}$ and $q_0^{\rm
app}=k_0^{\rm app}\to k_t^{\rm app}$ take place in
$\mathcal{U}_\alpha^\sigma$ and thus coincide. That is $q_t^{\rm
app}=k_t^{\rm app}$ for $t\leq \tau$ with some positive $\tau$, that
yields the desired positivity of $k_t^{\rm app}$. Then step (ii)
includes also taking the limit $\sigma \to 0^+$.

\subsection{Auxiliary evolutions}
For $\sigma >0$ and $x\in \mathds{R}^d$, we set
\begin{eqnarray}
\label{U1} & & \phi_\sigma (x)  =  \exp\left(- \sigma |x|^2 \right),
\quad  \langle \phi_\sigma \rangle  =  \int
_{\mathds{R}^d}\phi_\sigma (x)
d x.\\[.2cm] \nonumber
& & b_\sigma (x|y_1 , y_2) =   b (x|y_1 , y_2) \phi_\sigma (y_1)
\phi_\sigma (y_2).
\end{eqnarray}
Consider
\begin{equation}
  \label{U2}
L^{\Delta, \sigma}=A^{\Delta, \sigma}+B^{\Delta,
\sigma}=A_\upsilon^{\Delta, \sigma}+B_\upsilon^{\Delta, \sigma},
\end{equation}
that is obtained from the corresponding operators in (\ref{21}) and
(\ref{45}), (\ref{46}) by replacing $b$ with $b_\sigma$ given in
(\ref{U1}). Since this substitution does not affect
$\mathcal{D}^\Delta_\alpha$, see (\ref{24}), we will use the latter
as the domain of the corresponding unbounded operators. Then we
repeat the construction as in the proof of Lemma \ref{3lm} and
obtain the family $\{Q^\sigma_{\alpha_2\alpha_1}(t): (\alpha_1 ,
\alpha_2 , t)\in \mathcal{A}(B^\Delta_\upsilon)\}$ corresponding to
the choice $\mathbf{B}= B_\upsilon^{\Delta, \sigma}$. Along with the
evolution $t \mapsto Q^\sigma_{\alpha_2\alpha_1}(t) k_0$ we will
consider two more evolutions in $L^\infty$- and $L^1$-like spaces.
The latter one will be positive in the sense of Proposition
\ref{1pn} by the very construction. The auxiliary $L^\infty$-like
space where we are going to construct $t\mapsto k_t^{\rm app}$ lies
in the intersection of the just mentioned $L^1$-like space with the
spaces $\mathcal{K}_\alpha$, see Fig. 1, and hence is also positive
in the sense of Proposition \ref{1pn}. These arguments will allow us
to realize item (i) of the program.

\subsubsection{$L^\infty$-like evolution}
For $u: \Gamma_0 \to \mathbb{R}$, we define the norm
\begin{equation}
  \label{U3}
\| u \|_{\sigma, \alpha} = \esssup_{\eta \in
\Gamma_0}\frac{|u(\eta)|\exp(-\alpha|\eta|)}{e(\phi_\sigma; \eta)},
\end{equation}
where $$e(\phi_\sigma; \eta)= \prod_{x\in \eta}\phi_\sigma
(x)=\exp\left( -\sigma\sum_{x\in \eta} |x|^2\right),$$ cf.
(\ref{11}). Then we consider the Banach space $\mathcal{U}_{\sigma,
\alpha} = \{u: \Gamma_0 \to \mathbb{R}: \| u \|_{\sigma, \alpha}
<\infty\}$. Clearly,
\begin{equation}
  \label{Jan4}
\mathcal{U}_{\sigma, \alpha}\hookrightarrow \mathcal{K}_\alpha,
\qquad \alpha \in \mathds{R}.
\end{equation}
The space predual to $\mathcal{U}_{\sigma, \alpha}$ is the
$L^1$-space equipped with the norm, cf. (\ref{35}), (\ref{36}),
\begin{equation}
  \label{U4}
  |G|_{\sigma, \alpha}=\int_{\Gamma_0} |G(\eta)|\exp (\alpha |\eta|)e(\phi_\sigma;\eta)\lambda(d
  \eta).
\end{equation}
In this space, we define $A^\sigma_{1,\upsilon}$ which acts exactly
as in (\ref{37}), and $A^\sigma_{2}$ which acts as in (\ref{37})
with $b$ replaced by $b_\sigma$. Their domain is the same
$\mathcal{D}_\alpha$. Then, like in (\ref{39}), by means of
(\ref{18}) and (\ref{U4}) we obtain
\begin{eqnarray*}
|A_2^\sigma G|_{\sigma,\alpha} & = & \int_{\Gamma_0} \left(\sum_{x
\in \eta} \int_{(\mathds{R}^d)^2}
|G(\eta \setminus x \cup \{y_1, y_2\})|b_\sigma(x|y_1,y_2)dy_1 dy_2 \right)\\[.2cm] & \times &\exp (\alpha |\eta|)e(\phi_\sigma;\eta)\lambda(d \eta)
\nonumber\\[.2cm] & = & e^\alpha \int_{\Gamma_0} \left( \int_{(\mathds{R}^d)^3}
|G(\eta \cup \{y_1, y_2\})| b_\sigma(x|y_1,y_2) \phi_\sigma (x) d x
d y_1 d y_2 \right)\nonumber\\[.2cm] & \times & \exp (\alpha |\eta|)e(\phi_\sigma;\eta) \lambda
(d\eta)
\nonumber\\[.2cm]
& \leq & e^\alpha \int_{\Gamma_0} \left( \int_{(\mathds{R}^d)^2}
|G(\eta \cup \{y_1, y_2\})| \beta(y_2-y_1)
e(\phi_\sigma;\eta\cup\{y_1,y_2\}) d y_1 d y_2 \right)
\nonumber\\[.2cm]
& \times & \exp (\alpha |\eta|) \lambda (d\eta) \nonumber\\[.2cm]
  & = & e^{-\alpha}\int_{\Gamma_0} E^b(\eta) |G(\eta)|  e^{\alpha
|\eta|} e(\phi_\sigma;\eta) \lambda(d
\eta)\nonumber\\[.2cm] \nonumber & \leq & (e^{-\alpha}/\omega) \int_{\Gamma_0} e^{\alpha |\eta|} \Psi_\upsilon (\eta)|G(\eta)|
e(\phi;\eta) \lambda(d
\eta)\nonumber\\[.2cm] \nonumber & = & (e^{-\alpha}/\omega)
|A_{1,\upsilon}^\sigma G|_{\sigma, \alpha}.
\end{eqnarray*}
This allows us to prove the following analog of Lemma \ref{2lm}.
\begin{proposition}
  \label{U1pn}
Let $\upsilon$ and $\omega$ be as in Proposition \ref{2pn} and
$A_{1,\upsilon}^\sigma$, $A_2^\sigma$ and $\mathcal{D}_\alpha$ be as
just described. Then for each $\alpha> - \log \omega$, the operator
$(A_{\upsilon}^\sigma , \mathcal{D}_\alpha):=(A^\sigma_{1,\upsilon}
+ A^\sigma_2, \mathcal{D}_\alpha)$ is the generator of a
sub-stochastic semigroup
$S_{\sigma,\alpha}=\{S_{\sigma,\alpha}(t)\}_{t\geq 0}$ on
$\mathcal{G}_{\sigma,\alpha}$.
\end{proposition}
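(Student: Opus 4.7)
The plan is to mirror the proof of Lemma \ref{2lm} almost verbatim, using the weighted $L^1$-norm (\ref{U4}) in place of (\ref{36}) and exploiting the key estimate displayed just before the statement. I would invoke Proposition \ref{TV0pn} with $\mathcal{E}=\mathcal{G}_{\sigma,\alpha}$, $\mathcal{D}=\mathcal{D}_\alpha$, $A=A^\sigma_{1,\upsilon}$, noting that $A^\sigma_{1,\upsilon}$ is the multiplication operator by $-\Psi_\upsilon$ and so generates a sub-stochastic semigroup $\{S^\sigma_{0,\alpha}(t)\}$ on $\mathcal{G}_{\sigma,\alpha}$ given by multiplication with $\exp(-t\Psi_\upsilon(\eta))$, exactly as in (\ref{38}). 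The operator $A_2^\sigma$ is positive on $\mathcal{G}_{\sigma,\alpha}^+$ by inspection of its defining formula, and the displayed computation shows that it maps $\mathcal{D}_\alpha$ into $\mathcal{G}_{\sigma,\alpha}$.

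The main step is to verify condition (\ref{40}). For $r\in(0,1)$ to be chosen, take $B=r^{-1}A_2^\sigma$ and $G\in\mathcal{D}_\alpha^+$. Using (\ref{18}) and the substitution of $b_\sigma$ in place of $b$ — where the factor $\phi_\sigma(y_1)\phi_\sigma(y_2)$ from $b_\sigma$ together with $e(\phi_\sigma;\eta\setminus x)$ reconstructs $e(\phi_\sigma;\eta\cup\{y_1,y_2\})/\phi_\sigma(x)$ after renaming — one obtains, as in the calculation preceding the statement,
\begin{align*}
\int_{\Gamma_0}\!\bigl((A^\sigma_{1,\upsilon}+r^{-1}A_2^\sigma)G\bigr)(\eta)\,e^{\alpha|\eta|}e(\phi_\sigma;\eta)\,\lambda(d\eta)
\le -\!\int_{\Gamma_0}\!\bigl(\upsilon|\eta|+E^a(\eta)-r^{-1}e^{-\alpha}E^b(\eta)\bigr)G(\eta)\,e^{\alpha|\eta|}e(\phi_\sigma;\eta)\,\lambda(d\eta).
\end{align*}
Given $\alpha>-\log\omega$, pick $r\in(0,1)$ so small that $r^{-1}e^{-\alpha}\le\omega$; then by Proposition \ref{2pn} the integrand in the right-hand side is nonnegative, whence the left-hand side is nonpositive. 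This is precisely (\ref{40}) for the pair $(A^\sigma_{1,\upsilon},r^{-1}A_2^\sigma)$.

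Applying Proposition \ref{TV0pn} with this $r$ yields that $(A^\sigma_{1,\upsilon}+r\cdot r^{-1}A_2^\sigma,\mathcal{D}_\alpha)=(A_\upsilon^\sigma,\mathcal{D}_\alpha)$ generates a sub-stochastic semigroup $S_{\sigma,\alpha}=\{S_{\sigma,\alpha}(t)\}_{t\ge 0}$ on $\mathcal{G}_{\sigma,\alpha}$, completing the proof. The only mildly delicate point — and the place where $\phi_\sigma$ could a priori cause trouble — is the integration-by-parts step producing the factor $e^{-\alpha}E^b(\eta)$: the weight $e(\phi_\sigma;\eta)$ survives the exchange of summation and integration because the $\phi_\sigma$-factors tucked inside $b_\sigma$ precisely compensate for the $y_1,y_2$ being relocated from $\eta\setminus x\cup\{y_1,y_2\}$ to $\eta$. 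Once this bookkeeping is in order, no new ideas beyond those of Lemma \ref{2lm} are required.
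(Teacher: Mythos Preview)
Your approach is essentially identical to the paper's, which does not spell out a proof but simply presents Proposition \ref{U1pn} as the analog of Lemma \ref{2lm} obtained from the displayed estimate $|A_2^\sigma G|_{\sigma,\alpha}\le (e^{-\alpha}/\omega)|A_{1,\upsilon}^\sigma G|_{\sigma,\alpha}$. One small slip: to achieve $r^{-1}e^{-\alpha}\le\omega$ you must take $r$ \emph{close to $1$} (specifically $r\in[e^{-\alpha}/\omega,1)$, which is nonempty precisely because $\alpha>-\log\omega$), not ``so small'', since $r\mapsto r^{-1}e^{-\alpha}$ is decreasing; with that corrected, the argument goes through verbatim.
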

Let $S_{\sigma,\alpha}^{\odot}$ be the sun-dual semigroup, the
definition of which is pretty analogous to that of
$S_{\alpha}^{\odot}$,  see Proposition \ref{Papn}. Then, for
$\alpha'< \alpha$, we define $\Sigma_{\alpha \alpha'}^\sigma (t)=
S^{\odot}_{\sigma, \alpha}(t)|_{\mathcal{U}_{\sigma.\alpha'}}$. As
in Proposition \ref{Papn} we then get that the map
\begin{equation*}
[0, +\infty) \ni t \mapsto \Sigma_{\alpha \alpha'}^\sigma (t) \in
\mathcal{L}(\mathcal{U}_{\sigma , \alpha'}, \mathcal{U}_{\sigma ,
\alpha})
\end{equation*}
is continuous and
\begin{equation*}
\|\Sigma_{\alpha, \alpha'}^\sigma (t)\| \leq 1, \qquad {\rm for}  \
{\rm all} \ t\geq 0.
\end{equation*}
The operators $B^{\Delta,\sigma}_{\upsilon}= B^{\Delta,\sigma}_{1}+
B^{\Delta,\sigma}_{2,\upsilon}$ act as in (\ref{46}) with $b$
replaced by $b_\sigma$. Then we define the corresponding bounded
operators and obtain, cf. (\ref{47}),
\begin{equation*}
 \|(B^{\Delta,\sigma}_{\upsilon})_{\alpha \alpha'} \| \leq \frac{2 \langle b \rangle + \upsilon + \langle a \rangle e^{\alpha'}}{e(\alpha
 -\alpha')}.
\end{equation*}
Thereafter, we take $\delta>0$ as in Lemma \ref{3lm} and the
division as in (\ref{55}), and then define
\begin{eqnarray*}
\Pi_{\alpha \alpha'}^{l,\sigma}(t, t_1, t_2, ... , t_l) & = &
\Sigma_{\alpha \alpha^{2l}}^{\sigma}(t-t_1)(B_\upsilon^{\Delta,
\sigma})_{\alpha^{2l}\alpha^{2l-1}}\cdots \Sigma_{\alpha^{2s+1}
\alpha^{2s}}^{\sigma}(t_{l-s}-t_{l-s+1})\\[.2cm] & \times &(B_\upsilon^{\Delta,
\sigma})_{\alpha^{2s}\alpha^{2s-1}}\cdots \Sigma_{\alpha^3
\alpha^{2}}^{\sigma}(t_{l-1}-t_l)(B_\upsilon^{\Delta,
\sigma})_{\alpha^{2}\alpha^{1}}\Sigma_{\alpha^1
\alpha'}^{\sigma}(t_l),
\end{eqnarray*}
As in the proof of Lemma \ref{3lm} we obtain the family $\{
U^\sigma_{\alpha_2 \alpha_1}(t):(\alpha_1 , \alpha_2, t)\in
\mathcal{A}(B^{\Delta}_\upsilon)\}$, see (\ref{54}), with members
defined by
\[
U^\sigma_{\alpha_2 \alpha_1}(t) = \Sigma_{\alpha_2 \alpha_1}^\sigma
(t) + \sum_{l=1}^\infty \int_{0}^{t}
\int_0^{t_1}...\int_0^{t_{l-1}}\Pi^{l, \sigma}_{\alpha_2
\alpha_1}(t, t_1, t_2, ... , t_l)dt_l...dt_1,
\]
where the series converges for $t< T(\alpha_2, \alpha_1)$ defined in
(\ref{34}), cf. (\ref{53}) and (\ref{Jan3}). For this family, the
following holds, cf. (\ref{54a}),
\begin{equation}
  \label{U9}
  \frac{d}{dt} U^\sigma_{\alpha_2 \alpha_1}(t) = L^{\Delta,
  \sigma}_{\alpha_2,u}  U^\sigma_{\alpha_2 \alpha_1}(t),
\end{equation}
where the action of of $L^{\Delta,\sigma}_{\alpha_2,u}$ is as in
(\ref{U2}) and the domain is
\begin{equation}
  \label{U9a}
\mathcal{D}^{\Delta,\sigma}_{\alpha_2 , u} = \{ u\in
\mathcal{U}_{\sigma, \alpha_2}: \Psi_\upsilon u \in
\mathcal{U}_{\sigma, \alpha_2}\}\subset
\mathcal{D}^\Delta_{\alpha_2},
\end{equation}
where the latter inclusion follows by (\ref{Jan4}) and (\ref{24}).
Then by (\ref{U9a}) we have that
\begin{equation}
  \label{Jan10}
L^{\Delta,\sigma}_{\alpha, u} u = L^{\Delta,\sigma}_{\alpha} u ,
\qquad u\in \mathcal{D}^{\Delta,\sigma}_{\alpha ,
  u} .
\end{equation}
Now by (\ref{U9}) we prove the following statement.
\begin{proposition}
  \label{U2pn}
For each $\alpha_2>\alpha_1 > - \log \omega$, the problem
 \begin{equation}
   \label{U10}
 \dot{u}_t = L^{\Delta,\sigma}_{\alpha_2,u} u_t, \qquad u_t|_{t=0}= u_0 \in \mathcal{U}_{\sigma,\alpha_1}
 \end{equation}
has a unique solution $u_t \in \mathcal{U}_{\sigma,\alpha_2}$ on the
time interval $[0, T(\alpha_2, \alpha_1))$. This solution is given
by $u_t = U^\sigma_{\alpha_2 \alpha_1}(t) u_0$.
\end{proposition}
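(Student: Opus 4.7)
The plan is to repeat, in the $\sigma$-modified setting, the scheme that produced Lemma \ref{1lm} from Lemma \ref{3lm}. The discussion preceding the statement already provides the analogue of Lemma \ref{3lm}: the family $\{U^\sigma_{\alpha_2\alpha_1}(t):(\alpha_1,\alpha_2,t)\in \mathcal{A}(B^\Delta_\upsilon)\}$ is built from $\Sigma^\sigma_{\alpha\alpha'}$ and the bounded operators $(B^{\Delta,\sigma}_\upsilon)_{\alpha\alpha'}$ by the series construction of (\ref{58}); the same term-by-term differentiation that produced (\ref{57}) yields the identity (\ref{U9}). Taking $u_t := U^\sigma_{\alpha_2\alpha_1}(t) u_0$ therefore supplies a continuous map $[0,T(\alpha_2,\alpha_1))\to \mathcal{U}_{\sigma,\alpha_2}$ with $u_0$ as initial value, and this will be my candidate solution.

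Next I would verify that for $t\in(0,T(\alpha_2,\alpha_1))$ one actually has $u_t\in \mathcal{D}^{\Delta,\sigma}_{\alpha_2,u}$ and $\dot u_t = L^{\Delta,\sigma}_{\alpha_2,u} u_t$ in $\mathcal{U}_{\sigma,\alpha_2}$. The standard scale-of-spaces trick applies: picking $\alpha_3\in(\alpha_1,\alpha_2)$ with $t<T(\alpha_3,\alpha_1)$, one has $u_t = U^\sigma_{\alpha_2\alpha_1}(t)u_0\in \mathcal{U}_{\sigma,\alpha_3}$, and the inequality (\ref{25}) shows $\mathcal{U}_{\sigma,\alpha_3}\subset \mathcal{D}^{\Delta,\sigma}_{\alpha_2,u}$ in exact analogy with (\ref{26}). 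On such elements (\ref{Jan10}) identifies the unbounded version of the generator with the bounded one on the intermediate scale, so (\ref{U9}) upgrades to (\ref{U10}) as an equality in $\mathcal{U}_{\sigma,\alpha_2}$.

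For uniqueness I would follow the backward-duality argument of Lemma 4.8 in \cite{KK}. The predual counterpart of $U^\sigma_{\alpha_2\alpha_1}(t)$ is a family $\{H^\sigma_{\alpha\alpha_2}(t)\}$ constructed on the scale $\{\mathcal{G}_{\sigma,\alpha}\}$ exactly as in Proposition \ref{Du1pn}, but with $b$ replaced by $b_\sigma$; it satisfies the pairing identity analogous to (\ref{Du5}). Given two solutions $u^{(1)}_t,u^{(2)}_t$ of (\ref{U10}), set $v_t:=u^{(1)}_t-u^{(2)}_t$, fix $T<T(\alpha_2,\alpha_1)$ and an intermediate $\alpha\in(\alpha_1,\alpha_2)$, and consider $\phi(s):=\langle\!\langle H^\sigma_{\alpha\alpha_2}(T-s)G,v_s\rangle\!\rangle$ for $G\in \mathcal{G}_{\sigma,\alpha_2}$. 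The product rule together with the built-in duality between $L^{\Delta,\sigma}_{\alpha_2,u}$ and the predual generator of $\{H^\sigma_{\alpha\alpha_2}(t)\}$ gives $\phi'(s)=0$, so $\langle\!\langle G,v_T\rangle\!\rangle = \phi(T)=\phi(0)=0$. Since $\mathcal{G}_{\sigma,\alpha_2}$ separates points of $\mathcal{U}_{\sigma,\alpha_2}$, this forces $v_T=0$.

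The main obstacle I expect is the usual technical subtlety of the backward-duality computation: justifying the differentiability of $\phi$ in $s$ requires both factors to lie in suitable intermediate scales so that the duality identity can be invoked pointwise, rather than only in a distributional sense. However, this is handled by the same scale-shifting device used in Lemma \ref{3lm} — inserting auxiliary levels $\alpha^j$ strictly between $\alpha$ and $\alpha_2$ so that both $H^\sigma_{\alpha\alpha_2}(T-s)G$ lies in the domain of the predual generator and $v_s$ lies in $\mathcal{D}^{\Delta,\sigma}_{\alpha_2,u}$ — so no genuinely new difficulty beyond what was already dealt with in Sect.\ \ref{Sec4} arises.
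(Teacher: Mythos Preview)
Your proposal is correct and follows the same route as the paper: the paper's entire proof is the remark ``Now by (\ref{U9}) we prove the following statement'' preceding the proposition, so existence is just (\ref{U9}) together with the scale inclusion $\mathcal{U}_{\sigma,\alpha_3}\subset \mathcal{D}^{\Delta,\sigma}_{\alpha_2,u}$, and uniqueness is deferred to the same Lemma~4.8 of \cite{KK} that the paper invokes for the parallel Lemma~\ref{1lm}. You have simply unpacked these two references in the $\sigma$-weighted setting, which is exactly what is intended.
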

\begin{corollary}
  \label{U1co}
Let $\alpha_2>\alpha_1 > - \log \omega$ be as in Proposition
\ref{U2pn} and $Q^\sigma_{\alpha_2\alpha_1}(t)$ be as described at
the beginning of this subsection. Then for each $t< T(\alpha_2 ,
\alpha_1)$ and $u_0 \in \mathcal{U}_{\sigma,\alpha_1}\subset
\mathcal{K}_{\alpha_1}$, it follows that
\begin{equation}
  \label{U11}
U^\sigma_{\alpha_2 \alpha_1}(t) u_0 = Q^\sigma_{\alpha_2
\alpha_1}(t) u_0.
\end{equation}
\end{corollary}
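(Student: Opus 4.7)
The plan is a uniqueness argument: both operator families $U^\sigma$ and $Q^\sigma$ are produced by the same Dyson-type series (compare the formulas defining $\Pi^{l,\sigma}$ and $\Pi^{(l)}$ in Lemma \ref{3lm}), with the only difference that the ambient spaces are $\mathcal{U}_{\sigma,\alpha}$ versus $\mathcal{K}_{\alpha}$. I would show that $u_t := U^\sigma_{\alpha_2\alpha_1}(t) u_0$, reinterpreted via the embedding (\ref{Jan4}), satisfies the same $\mathcal{K}_{\alpha_2}$-Cauchy problem as $Q^\sigma_{\alpha_2\alpha_1}(t) u_0$, and then invoke uniqueness.

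Concretely, first I would apply Proposition \ref{U2pn} to the $\sigma$-modified model to obtain that $u_t$ is continuous as a map into $\mathcal{U}_{\sigma,\alpha_2}$, lies in $\mathcal{D}^{\Delta,\sigma}_{\alpha_2,u}$ for each $t\in (0,T(\alpha_2,\alpha_1))$, is continuously differentiable on that interval, and satisfies (\ref{U10}). Composing with the continuous embedding $\mathcal{U}_{\sigma,\alpha_2} \hookrightarrow \mathcal{K}_{\alpha_2}$ from (\ref{Jan4}) preserves continuity and differentiability, so the map $t \mapsto u_t$ has the same regularity in $\mathcal{K}_{\alpha_2}$. By the inclusion in (\ref{U9a}), $u_t \in \mathcal{D}^\Delta_{\alpha_2}$ for each $t$, and by (\ref{Jan10}), $L^{\Delta,\sigma}_{\alpha_2,u} u_t = L^{\Delta,\sigma}_{\alpha_2} u_t$. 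Hence $u_t$ solves the Cauchy problem
\begin{equation*}
\dot{k}_t = L^{\Delta,\sigma}_{\alpha_2} k_t, \qquad k_t|_{t=0} = u_0 \in \mathcal{K}_{\alpha_1},
\end{equation*}
in the sense of Lemma \ref{1lm} (applied to the $\sigma$-modified generator, which leaves (\ref{34}) and the statement of Lemma \ref{1lm} intact since $b_\sigma \leq b$ pointwise and $\langle b_\sigma\rangle \leq \langle b\rangle$, $\beta_\sigma^* \leq \beta^*$, etc.).

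On the other hand, the construction in Lemma \ref{3lm} applied with $\mathbf{B} = B^{\Delta,\sigma}_\upsilon$ shows that $t\mapsto Q^\sigma_{\alpha_2\alpha_1}(t) u_0$ is also such a solution on $[0,T(\alpha_2,\alpha_1))$. The uniqueness clause of Lemma \ref{1lm} (proved as in Lemma 4.8 of \cite{KK}) therefore forces the identity (\ref{U11}).

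The main obstacle I anticipate is verifying that the solution $u_t$ obtained in the smaller space $\mathcal{U}_{\sigma,\alpha_2}$ genuinely lands in the domain $\mathcal{D}^\Delta_{\alpha_2}$ of the operator acting in $\mathcal{K}_{\alpha_2}$, and that its $\mathcal{U}_{\sigma,\alpha_2}$-derivative is a valid $\mathcal{K}_{\alpha_2}$-derivative. Both reduce to the continuity of the embedding (\ref{Jan4}) together with the inclusion (\ref{U9a}), but one has to track carefully that the passage from norm $\|\cdot\|_{\sigma,\alpha_2}$ to $\|\cdot\|_{\alpha_2}$ commutes with time differentiation; this is immediate from the fact that a bounded linear map takes differentiable curves to differentiable curves with the obvious derivative. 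The remaining verification that the $\sigma$-modified model still satisfies Proposition \ref{2pn} with the same $\omega$ (so that the semigroup $S^\odot_{\sigma,\alpha}$ exists for $\alpha > -\log\omega$) is routine because the inequality in (\ref{fi}) becomes only easier when $b$ is replaced by the pointwise smaller $b_\sigma$.
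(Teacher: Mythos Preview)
Your proposal is correct and follows essentially the same route as the paper: use (\ref{Jan10}) and the domain inclusion (\ref{U9a}) to see that the $\mathcal{U}_{\sigma,\alpha_2}$-solution $u_t$ of (\ref{U10}) also solves the $\sigma$-analog of (\ref{33}) in $\mathcal{K}_{\alpha_2}$, then invoke the uniqueness clause of Lemma \ref{1lm}. The only difference is that you spell out in more detail the embedding and regularity verifications that the paper leaves implicit.
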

\begin{proof}
By (\ref{Jan10}) we get that the solution of (\ref{U10}) is also the
unique solution of the following ``$\sigma$-analog" of (\ref{33})
$$\dot{u}_t = L^{\Delta,\sigma}_{\alpha_2} u_t, \quad u_t|_{t=0}=
u_0,
$$ and hence is given by the right-hand side of (\ref{U11}). Then the
equality in (\ref{U11}) follows by the uniqueness just mentioned.
\end{proof}

\subsubsection{$L^1$-like evolution}
Now we take $L^{\Delta,\sigma}$ as given in (\ref{U2}) and define
the corresponding operator $L^{\Delta,\sigma}_\vartheta$ in
$\mathcal{G}_\vartheta$, $\vartheta \in \mathds{R}$, introduced in
(\ref{35}), (\ref{36}), with domain $\mathcal{D}_\vartheta$ given in
(\ref{37}). By (\ref{U2}) and (\ref{21}) we have that $A^\Delta_1 :
\mathcal{D}_\vartheta \to \mathcal{G}_\vartheta$. Next, for $q\in
\mathcal{D}_\vartheta$, we have
\begin{eqnarray}
  \label{Jan11}
|A^{\Delta, \sigma}_2 q |_\vartheta & \leq & \int_{\Gamma_0}
e^{\vartheta|\eta|}\left( \int_{\mathds{R}^d} \sum_{y_1\in
\eta}\sum_{y_2\in \eta\setminus y_1 } |q(\eta\cup x\setminus\{y_1,
y_2\}) | b_\sigma (x|y_1. y_2) d x \right) \lambda ( d \eta)
\qquad \quad \\[.2cm] \nonumber &\leq & \int_{\Gamma_0}
e^{\vartheta|\eta|+ 2 \vartheta} \int_{\mathds{R}^d} |q(\eta\cup x)|
\left( \int_{(\mathds{R}^d)^2} b(x|y_1 , y_2) d y_1 d y_2 \right) d
x \lambda ( d \eta) \\[.2cm] \nonumber &=& \langle b \rangle
e^\vartheta \int_{\Gamma_0} |\eta| e^{\vartheta |\eta|} |q(\eta)|
\lambda ( d \eta) \leq e^\vartheta \int_{\Gamma_0} \Psi(\eta)
e^{\vartheta |\eta|} |q(\eta)| \lambda ( d \eta),
\end{eqnarray}
see item (iii) of Assumption \ref{ass1} and (\ref{22}). Hence,
$A^{\Delta,\sigma}_2 : \mathcal{D}_\vartheta \to
\mathcal{G}_\vartheta$. Next, for the same $q$, we have
\begin{eqnarray}
  \label{Jan12}
|B^{\Delta}_1 q |_\vartheta & \leq & \int_{\Gamma_0}
e^{\vartheta|\eta|}\left( \int_{\mathds{R}^d} |q(\eta\cup x)| E^a(x,
\eta) d x \right) \lambda ( d \eta) \\[.2cm] \nonumber & = &
e^{-\vartheta} \int_{\Gamma_0} e^{\vartheta |\eta|}  E^a (\eta)
|q(\eta)| \lambda ( d \eta) \leq e^{-\vartheta} \int_{\Gamma_0}
\Psi(\eta) e^{\vartheta |\eta|} |q(\eta)| \lambda ( d \eta).
\end{eqnarray}
Hence, $B^\Delta_1 : \mathcal{D}_\vartheta \to
\mathcal{G}_\vartheta$. Finally,
\begin{eqnarray}
  \label{Jan9}
|B^{\Delta,\sigma}_2 q |_\vartheta & \leq & 2 \int_{\Gamma_0}
e^{\vartheta|\eta|} \left( \int_{(\mathds{R}^d)^2} \sum_{y_1\in
\eta} |q(\eta \cup x\setminus y_1)| b_\sigma (x|y_1, y_2) d y_2 d x
\right) \lambda (d\eta) \qquad \\[.2cm] \nonumber & \leq & 2
\int_{\Gamma_0} e^{\vartheta|\eta|+\vartheta} \left(
\int_{(\mathds{R}^d)^3}|q(\eta \cup x)|  b (x|y_1, y_2) d x d y_1 d
y_2\right)  \lambda (d\eta) \qquad \\[.2cm] \nonumber & = & 2
\langle b \rangle \int_{\Gamma_0} e^{\vartheta|\eta|} |\eta|
|q(\eta)| \lambda ( d \eta) \leq  \int_{\Gamma_0} \Psi(\eta)
e^{\vartheta |\eta|} |q(\eta)| \lambda ( d \eta).
\end{eqnarray}
Then by (\ref{Jan11}), (\ref{Jan12}) and (\ref{Jan9}) we conclude
that, for an arbitrary $\vartheta \in \mathds{R}$,
$L^{\Delta,\sigma}= A^{\Delta}_1 + A^{\Delta,\sigma}_2 +
B^{\Delta}_1 + B^{\Delta,\sigma}_2$ maps $\mathcal{D}_\vartheta$ to
$\mathcal{G}_\vartheta$ and hence can be used to define the
corresponding unbounded operator $(L^{\Delta,\sigma}_\vartheta,
\mathcal{D}_\vartheta)$. Let us then consider the corresponding
Cauchy problem
\begin{equation}
  \label{Jan8}
 \dot{q}_t = L^{\Delta,\sigma}_\vartheta q_t , \qquad q_t|_{t=0} =
 q_0 \in \mathcal{D}_\vartheta.
\end{equation}
Recall that $\mathcal{G}_{\vartheta'} \subset \mathcal{D}_\vartheta$
for each $\vartheta'> \vartheta$.
\begin{lemma}
  \label{Janln}
For a given $\vartheta >0$ and $\vartheta'> \vartheta$, assume that
the problem in (\ref{Jan8}) with $q_0 \in \mathcal{G}_{\vartheta'}$
 has a solution $q_t\in\mathcal{G}_\vartheta$ on a time interval
 $[0,\tau)$. Then this solution is unique.
\end{lemma}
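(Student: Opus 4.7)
The plan is to prove uniqueness via a duality argument. Set $v_t := q_t - \tilde q_t$, so $v_0 = 0$, $t \mapsto v_t$ is continuous into $\mathcal{G}_\vartheta$ on $[0,\tau)$, and $\dot v_t = L^{\Delta,\sigma}_\vartheta v_t$ on $(0,\tau)$. It then suffices to show that $\langle \! \langle v_{t_0}, k \rangle \! \rangle = 0$ for every $t_0 \in [0,\tau)$ and every $k$ in a weak-$*$ separating subset of $\mathcal{K}_\vartheta = \mathcal{G}_\vartheta^{*}$ (for instance, indicators of sets from $\mathcal{B}(\Gamma_0)$ with bounded support), which then forces $v_{t_0} = 0$ in $\mathcal{G}_\vartheta$.

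First I would compute the formal adjoint $\check L^\sigma$ of $L^{\Delta,\sigma}$ with respect to the pairing $\langle \! \langle \cdot , \cdot \rangle \! \rangle$ of (\ref{14}). A term-by-term Fubini-type calculation based on (\ref{17}) and (\ref{18}) shows that the adjoint interchanges the gain and loss parts of each summand in (\ref{21}); for example, the adjoint of $A^{\Delta,\sigma}_2$ acting on $\mathcal{G}$ coincides with the operator on $\mathcal{K}$ given by the $A_2$-formula of (\ref{37}) with $b$ replaced by $b_\sigma$. In particular $\check L^\sigma$ has the same death-plus-fission structure as $L^{\Delta,\sigma}$, obeys bounds of the type (\ref{27})--(\ref{28}) in the $\mathcal{K}$-scale, and splits as $A_{1,\upsilon} + A_2^\sigma$ (generating a sub-stochastic predual semigroup, by a repetition of Lemma \ref{2lm}) plus a bounded-between-scales perturbation. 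Repeating verbatim the sun-dual construction of Lemma \ref{3lm} with $\check L^\sigma$ in place of $L^{\Delta,\sigma}_\upsilon$ then yields a family of backward propagators $\{\check Q_{\alpha\alpha'}(r)\}$ on the $\mathcal{K}$-scale solving $\frac{d}{dr} \check Q_{\alpha\alpha'}(r) = \check L^\sigma_\alpha \check Q_{\alpha\alpha'}(r)$ on a maximal time interval; fixing $t_0 < \tau$ and $\alpha_0 \in (-\log \omega, \vartheta)$ and gluing across a finite partition of $[0,t_0]$ through intermediate indices (analog of Corollary \ref{JKco}), one produces $\check Q_{\vartheta \alpha_0}(r) : \mathcal{K}_{\alpha_0} \to \mathcal{K}_\vartheta$ that is continuously differentiable in $r \in [0,t_0]$.

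For arbitrary $k_{\mathrm{test}} \in \mathcal{K}_{\alpha_0}$ I set $k_s := \check Q_{\vartheta \alpha_0}(t_0 - s) k_{\mathrm{test}} \in \mathcal{K}_\vartheta$ for $s \in [0,t_0]$, so that $\dot k_s = -\check L^\sigma k_s$. Then $\phi(s) := \langle \! \langle v_s, k_s \rangle \! \rangle$ is continuous on $[0,t_0]$, differentiable on $(0,t_0)$, and
\begin{equation*}
\dot\phi(s) = \langle \! \langle L^{\Delta,\sigma} v_s, k_s \rangle \! \rangle - \langle \! \langle v_s, \check L^\sigma k_s \rangle \! \rangle = 0
\end{equation*}
by the adjoint identity. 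Since $\phi(0) = \langle \! \langle 0, k_{t_0} \rangle \! \rangle = 0$, this gives $\langle \! \langle v_{t_0}, k_{\mathrm{test}} \rangle \! \rangle = 0$; varying $\alpha_0$ and $k_{\mathrm{test}}$ over a separating class forces $v_{t_0} \equiv 0$. The main technical obstacle will be the clean verification of the adjoint identity on the correct domains and the construction of $\check Q$ with continuous differentiability between adjacent $\mathcal{K}$-scales; both parallel exactly the machinery of Section \ref{Sec4}, with the quantitative estimates for $\check L^\sigma$ in the $\mathcal{K}$-scale mirroring (\ref{Jan11})--(\ref{Jan9}) on the $\mathcal{G}$-side.
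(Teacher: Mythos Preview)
Your duality scheme is a natural idea, but the construction of $\check Q$ on the $\mathcal{K}$-scale cannot go through as sketched. There are two coupled obstacles. First, in Lemma~\ref{3lm} the sun-dual of the semigroup generated by $A_{1,\upsilon}+A_2$ on $\mathcal{G}_\alpha$ has generator $(A_{1,\upsilon}+A_2)^{*}=A^\Delta_{1,\upsilon}+A^\Delta_2$ on $\mathcal{K}_\alpha$, \emph{not} $A_{1,\upsilon}+A_2$ again; the adjunction does not return the same formula. Hence if you perturb this sun-dual semigroup by the remaining piece of $\check L^\sigma$, the Dyson series you obtain solves $\dot k=(A^{\Delta,\sigma}_\upsilon + B_1+B^\sigma_{2,\upsilon})k$, which is neither $L^{\Delta,\sigma}k$ nor $\check L^\sigma k$. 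Second, and more fundamentally, the perturbation you propose contains the competition term $(B_1 k)(\eta)=-\sum_{x\in\eta}k(\eta\setminus x)\,E^a(x,\eta\setminus x)$, which on the $\mathcal{K}$-scale obeys only an $(\alpha-\alpha')^{-2}$ bound (the factor $E^a(\eta)\le a^*|\eta|^2$ costs two exponential shifts), and this destroys the summability of the series~(\ref{58}). To salvage the sun-dual route you would instead need a predual semigroup on $\mathcal{G}_\vartheta$ generated by $A_{1,\upsilon}^\Delta+A_2^{\Delta,\sigma}$, but~(\ref{Jan11}) gives $|A_2^{\Delta,\sigma}q|_\vartheta\le e^{\vartheta}|A_1^\Delta q|_\vartheta$, and for $\vartheta>0$ this defeats Proposition~\ref{TV0pn}.

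The paper avoids duality entirely. It stays in $\mathcal{G}_\vartheta$ and makes the substitution $w_t(\eta)=(-1)^{|\eta|}q_t(\eta)$, which flips the signs of $B_1^\Delta$ and $A_2^{\Delta,\sigma}$. Since $-B_1^\Delta$ is positive and satisfies $|{-}B_1^\Delta q|_\vartheta\le e^{-\vartheta}|A_1^\Delta q|_\vartheta$ by~(\ref{Jan12}), Proposition~\ref{TV0pn} shows that $A_1^\Delta-B_1^\Delta$ generates a sub-stochastic semigroup $V_\vartheta$ on $\mathcal{G}_\vartheta$ precisely when $\vartheta>0$; the quadratic competition is thereby absorbed into the semigroup part. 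The leftover $C^{\Delta,\sigma}:=-A_2^{\Delta,\sigma}+B_2^{\Delta,\sigma}$ is bounded from $\mathcal{G}_\vartheta$ to $\mathcal{G}_{\vartheta''}$ with norm at most $3\langle b\rangle/(e(\vartheta-\vartheta''))$. Iterating the Duhamel identity $v_t=\int_0^t V_{\vartheta''}(t-s)\,C^{\Delta,\sigma} v_s\,ds$ through a uniform partition of $[\vartheta'',\vartheta]$ gives $|v_t|_{\vartheta''}\to 0$ for $t<(\vartheta-\vartheta'')/(3\langle b\rangle)$, and one then repeats to cover all of $[0,\tau)$. The essential point you are missing is that the quadratic term must ride with the diagonal generator, and the sign flip $(-1)^{|\eta|}$ is exactly what makes that combination dissipative on $\mathcal{G}_\vartheta$ when $\vartheta>0$.
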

\begin{proof}
Set
\[
w_t (\eta) = (-1)^{|\eta|}q_t (\eta).
\]
Then $|w_t|_\vartheta= |q_t|_\vartheta$ and $q_t$ solves
(\ref{Jan8}) if and only if $w_t$ solves the following equation
\begin{equation}
  \label{Jan13}
  \dot{w}_t = \left( A_1^{\Delta} - A_2^{\Delta,\sigma} -
  B_1^{\Delta} +  B_2^{\Delta,\sigma} \right) w_t.
\end{equation}
By Proposition \ref{TV0pn} we prove that $(A_1^{\Delta} -
B_1^{\Delta}, \mathcal{D}_\vartheta)$ generates a sub-stochastic
semigroup on $\mathcal{G}_\vartheta$. Indeed, $(A_1^{\Delta} ,
\mathcal{D}_\vartheta)$ generates a sub-stochastic semigroup defined
in (\ref{38}) with $\upsilon =0$, and $- B_1^{\Delta}$ is positive
and defined on $\mathcal{D}_\vartheta$, see (\ref{Jan12}). Also by
(\ref{Jan12}), for $w\in \mathcal{G}^{+}_\vartheta$ and $r\in
(0,1)$, we get
\begin{eqnarray*}
& & \int_{\Gamma_0} e^{\vartheta|\eta|}\left( \left(A_1^{\Delta} -
r^{-1}B_1^{\Delta}\right) w\right)(\eta) \lambda(d\eta) = -
\int_{\Gamma_0} e^{\vartheta|\eta|} \Psi(\eta) w(\eta) \lambda ( d
\eta) \\[.2cm] \nonumber & &  + r^{-1} \int_{\Gamma_0}
e^{\vartheta|\eta|}\left( \int_{\mathds{R}^d} w(\eta\cup x) E^a(x,
\eta) d x \right) \lambda ( d \eta) \\[.2cm] \nonumber & & =  -
\int_{\Gamma_0} e^{\vartheta|\eta|} \Psi(\eta) w(\eta) \lambda ( d
\eta) + r^{-1} e^{-\vartheta} \int_{\Gamma_0} e^{\vartheta |\eta|}
E^a (\eta) w(\eta) \lambda ( d \eta) \\[.2cm] \nonumber & & \leq - \left(1 - r^{-1}
e^{-\vartheta} \right) \int_{\Gamma_0} \Psi(\eta) e^{\vartheta
|\eta|} w(\eta) \lambda ( d \eta) \leq 0,
\end{eqnarray*}
where the latter inequality holds for $r\in (e^{-\vartheta}, 1)$.
Therefore, $(A_1^{\Delta} - B_1^{\Delta}, \mathcal{D}_\vartheta)=
(A_1^{\Delta} - r r^{-1} B_1^{\Delta}, \mathcal{D}_\vartheta)$
generates a sub-stochastic semigroup $V_\vartheta =\{V_\vartheta
(t)\}_{t\geq 0}$ on $\mathcal{G}_\vartheta$. For each $\vartheta''
\in (0, \vartheta)$, we have that $\mathcal{G}_\vartheta
\hookrightarrow \mathcal{G}_{\vartheta''}$. By the estimates in
(\ref{Jan11}) and (\ref{Jan9}), similarly as in (\ref{47}) we obtain
that
\begin{eqnarray*}
|A^{\Delta, \sigma}_2 w|_{\vartheta''} &\leq & \frac{\langle b
\rangle}{e(\vartheta - \vartheta'')} |w|_\vartheta, \\[.2cm]
|B^{\Delta, \sigma}_2 w|_{\vartheta''} & \leq & \frac{2\langle b
\rangle}{e(\vartheta - \vartheta'')} |w|_\vartheta,
\end{eqnarray*}
which we then use to define a bounded operator $C^{\Delta,
\sigma}_{\vartheta''\vartheta} :\mathcal{G}_\vartheta \to
\mathcal{G}_{\vartheta''}$. It acts as $- A^{\Delta,\sigma}_2 +
B^{\Delta,\sigma}_2$ and its norm satisfies
\begin{equation}
  \label{Jan15}
\|C^{\Delta, \sigma}_{\vartheta''\vartheta}\| \leq \frac{3\langle b
\rangle}{e(\vartheta - \vartheta'')}.
\end{equation}
Assume now that (\ref{Jan13}) has two solutions corresponding to the
same initial condition $w_0$. Let $v_t$ be their difference. Then it
solves (\ref{Jan13}) with the zero initial condition and hence
satisfies
\begin{equation}
  \label{Jan16}
 v_t = \int_0^t V_{\vartheta''} (t - s) C^{\Delta,
 \sigma}_{\vartheta'' \vartheta}
 v _s d s
\end{equation}
where $v_t$ in the left-hand side is considered as an element of
$\mathcal{G}_{\vartheta''}$ and $t>0$ will be chosen later. Now for
a given $n\in \mathds{N}$, we set $\epsilon = (\vartheta -
\vartheta'')/n$ and $\vartheta^l = \vartheta - l \epsilon$, $l=0,
\dots , n$. Next, we iterate (\ref{Jan16}) due times and get
\begin{eqnarray*}
v_t & = & \int_0^t \int_0^{t_1} \cdots \int_{0}^{t_{n-1}}
V_{\vartheta''} (t-t_1)  C^{\Delta,
 \sigma}_{\vartheta'' \vartheta^{n-1}} V_{\vartheta^{n-1}} (t_1-t_2)  C^{\Delta,
 \sigma}_{\vartheta^{n-1} \vartheta^{n-2}} \times \cdots \times
 \\[.2cm] & \times & V_{\vartheta^{1}} (t_{n-1}-t_n)  C^{\Delta,
 \sigma}_{\vartheta^{n-1} \vartheta} v_{t_n} d t_n \cdots d t_1.
\end{eqnarray*}
Then we take into account that $V_\vartheta$ is sub-stochastic,
$C^{\Delta,
 \sigma}_{\vartheta^l \vartheta^{l-1}}$ are positive and satisfy
(\ref{Jan15}), and thus obtain from the latter that $v_t$ satisfies
\[
|v_t|_{\vartheta''} \leq \frac{1}{n!} \left(\frac{n}{e} \right)^n
\left(\frac{3 t \langle b \rangle}{\vartheta - \vartheta''}\right)^n
\sup_{s\in [0,t]}|v_s|_{\vartheta}.
\]
Then, since $n$ is an arbitrary positive integer, for all $t<
(\vartheta - \vartheta'')/ 3\langle b \rangle$ it follows that $v_t
=0$. To prove that $v_t=0$ for all $t$ of interest one has to repeat
the above procedure appropriate number of times.
\end{proof}
Let us now take $u\in \mathcal{U}_{\sigma, \alpha}$ with some
$\alpha \in \mathds{R}$, for which by (\ref{U3}) we have
\[
|u(\eta)|\leq \|u\|_{\sigma,\alpha} e^{\alpha |\eta|} e(\phi_\sigma,
\eta).
\]
Then the norm of this $u$ in $\mathcal{G}_\vartheta$ can be
estimated as follows, see (\ref{U1}),
\begin{equation}
\label{x}
|u|_\vartheta\leq \|u\|_{\sigma,\alpha} \int_{\Gamma_0} \exp\left(
(\alpha + \vartheta) |\eta|\right) e(\phi_\sigma, \eta)\lambda
(d\eta) = \|u\|_{\sigma,\alpha} \exp\left( (\alpha + \vartheta)
\langle \phi \rangle\right).
\end{equation}
This means that $\mathcal{U}_{\sigma, \alpha}\hookrightarrow
\mathcal{G}_{\vartheta}$ for each pair of real $\alpha$ and
$\vartheta$. Moreover, for the operators discussed above this
implies, cf. (\ref{Jan10}),
\begin{equation}
  \label{U12}
  L^{\Delta,\sigma}_{\alpha,u}u = L_{\vartheta}^{\Delta, \sigma}u, \qquad u\in\mathcal{D}^{\Delta,\sigma}_{\alpha,u}.
\end{equation}
\begin{corollary}
  \label{Jan2co}
Let $\alpha_1$ and $\alpha_2$ be as in Proposition \ref{U2pn}. Then,
for each $q_0 \in \mathcal{U}_{\sigma,\alpha_1}$, the problem in
(\ref{Jan8}) has a unique solution $q_t \in
\mathcal{U}_{\sigma,\alpha_2}$, $t<T(\alpha_2, \alpha_1)$, which
coincides with the unique solution of (\ref{U10}).
\end{corollary}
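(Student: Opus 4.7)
The plan is to produce the solution of (\ref{Jan8}) explicitly as $q_t := U^{\sigma}_{\alpha_2 \alpha_1}(t) q_0$ and then invoke Lemma \ref{Janln} for uniqueness. Since $q_0 \in \mathcal{U}_{\sigma,\alpha_1}$, Proposition \ref{U2pn} yields $q_t \in \mathcal{U}_{\sigma,\alpha_2}$ for $t < T(\alpha_2,\alpha_1)$, the map $t\mapsto q_t$ being continuous into $\mathcal{U}_{\sigma,\alpha_2}$ and continuously differentiable on $(0,T(\alpha_2,\alpha_1))$ with $\dot{q}_t = L^{\Delta,\sigma}_{\alpha_2,u} q_t$, and in particular $q_t\in \mathcal{D}^{\Delta,\sigma}_{\alpha_2,u}$.

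The next step is to lift this statement into $\mathcal{G}_\vartheta$. Fix an arbitrary $\vartheta \in \mathds{R}$. By the estimate (\ref{x}), the embedding $\mathcal{U}_{\sigma,\alpha_2}\hookrightarrow \mathcal{G}_\vartheta$ is continuous, so $t\mapsto q_t$ is continuous (resp.\ continuously differentiable on $(0,T(\alpha_2,\alpha_1))$) as a curve in $\mathcal{G}_\vartheta$ with the same derivative. Moreover $\Psi_\upsilon q_t \in \mathcal{U}_{\sigma,\alpha_2}$ (again by Proposition \ref{U2pn}), and applying (\ref{x}) once more gives $\Psi_\upsilon q_t \in \mathcal{G}_\vartheta$, so $q_t \in \mathcal{D}_\vartheta$. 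Now (\ref{U12}) yields
\[
\dot{q}_t \;=\; L^{\Delta,\sigma}_{\alpha_2,u} q_t \;=\; L^{\Delta,\sigma}_{\vartheta} q_t,
\]
which proves that $q_t$ solves (\ref{Jan8}) in $\mathcal{G}_\vartheta$.

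For uniqueness, the idea is to apply Lemma \ref{Janln}. The lemma requires the initial datum to lie in $\mathcal{G}_{\vartheta'}$ for some $\vartheta' > \vartheta$, and this is precisely where the Gaussian factor $\phi_\sigma$ built into $\mathcal{U}_{\sigma,\alpha_1}$ pays off: the estimate (\ref{x}) shows that $\mathcal{U}_{\sigma,\alpha_1}\hookrightarrow \mathcal{G}_{\vartheta'}$ for every real $\vartheta'$, so we are free to pick any $\vartheta'>\vartheta$ and obtain $q_0 \in \mathcal{G}_{\vartheta'}$. Lemma \ref{Janln} then gives uniqueness of solutions to (\ref{Jan8}) in $\mathcal{G}_\vartheta$ on $[0,T(\alpha_2,\alpha_1))$, which in particular forces any such solution to coincide with $q_t = U^{\sigma}_{\alpha_2 \alpha_1}(t) q_0$, the unique solution of (\ref{U10}) guaranteed by Proposition \ref{U2pn}.

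The only delicate point — and really the whole reason the corollary is nontrivial — is that (\ref{Jan8}) is posed in the $L^{1}$-like space $\mathcal{G}_\vartheta$ while (\ref{U10}) is posed in the $L^\infty$-like space $\mathcal{U}_{\sigma,\alpha_2}$; bridging these requires both the pointwise identity (\ref{U12}) of the two generators on the common domain and the Gaussian-induced embedding (\ref{x}) in order to satisfy the strict inequality $\vartheta'>\vartheta$ needed by the uniqueness Lemma \ref{Janln}. Once these two ingredients are in hand, everything else is a direct verification.
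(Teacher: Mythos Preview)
Your argument is correct and follows the same route as the paper: take the solution $u_t=U^{\sigma}_{\alpha_2\alpha_1}(t)q_0$ of (\ref{U10}), use the identity (\ref{U12}) together with the embedding (\ref{x}) to see that it solves (\ref{Jan8}) in $\mathcal{G}_\vartheta$, and then invoke Lemma~\ref{Janln} for uniqueness. One small slip: you write ``Fix an arbitrary $\vartheta\in\mathds{R}$'' and then apply Lemma~\ref{Janln}, but that lemma is stated (and proved, via the choice $r\in(e^{-\vartheta},1)$) only for $\vartheta>0$; since you are free to choose $\vartheta$, simply take any $\vartheta>0$ and the argument goes through unchanged.
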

\begin{proof}
By (\ref{U12}) we have that the unique solution of (\ref{U10}) $u_t$
solves also (\ref{Jan8}), and this is a unique solution in view of
Lemma \ref{Janln}.
\end{proof}

\subsection{Local approximations}

Our aim now is to prove that, cf. Proposition \ref{1pn}, the
following holds
\begin{equation}
  \label{Jan18}
\langle \! \langle G, Q^\sigma_{\alpha_2 \alpha_1}(t) k_0 \rangle \!
\rangle \geq 0, \qquad G\in B_{\rm bs}^\star (\Gamma_0),
\end{equation}
for suitable $t>0$. By Corollaries \ref{U1co} and \ref{Jan2co} to
this end it is enough to prove (\ref{Jan18}) with
$Q^\sigma_{\alpha_2 \alpha_1}(t) k_0$ replaced by $q_t$.

For $\mu_0\in \mathcal{P}_{\rm exp}(\Gamma)$ and a compact
$\Lambda$, let $\mu^\Lambda_0\in \mathcal{P}(\Gamma_\Lambda)$ be the
corresponding projection to $\Gamma_\Lambda$ defined in (\ref{Rel}).
Let $R^\Lambda_0$ be its Radon-Nikodym derivative, see (\ref{12}).
For $N\in \mathds{N}$ and $\eta\in \Gamma_0$, we then set
\begin{equation}
  \label{Jan19}
  R^{\Lambda, N}_0 (\eta) = \left\{ \begin{array}{ll}
 R^{\Lambda}_0(\eta) , \qquad &{\rm if} \ \ \eta\in \Gamma_\Lambda \
 \ {\rm and} \ \ |\eta|\leq N;\\[.2cm]
 0, \qquad &{\rm otherwise}. \end{array} \right.
\end{equation}
Until the end of this subsection, $\Lambda$ and $N$ are fixed.
Having in mind (\ref{13}) we introduce
\begin{equation}
  \label{Jan20}
  q_0^{\Lambda,N}( \eta) = \int_{\Gamma_0} R^{\Lambda,N}_0 (\eta\cup
  \xi) \lambda ( d \xi), \qquad \eta \in  \Gamma_{0}.
\end{equation}
For $G\in B^{\star}_{\rm bs}(\Gamma_0)$, by (\ref{15}), (\ref{18})
and (\ref{Jan20}) we have
\begin{equation}
  \label{Jan21}
\langle\!\langle G, q_0^{\Lambda,N} \rangle\!\rangle =
\langle\!\langle K G, R_0^{\Lambda,N} \rangle\!\rangle \geq 0.
\end{equation}
By (\ref{Jan19}) it follows that $R^{\Lambda, N}_0 \in
\mathcal{R}^{+}$ and $\|R^{\Lambda, N}_0\|_\mathcal{R}\leq 1$.
Moreover, for each $\kappa>0$, we have, see (\ref{8}),
\begin{eqnarray}
  \label{Jan22}
  \|R^{\Lambda, N}_0\|_{\mathcal{R}_{\chi^\kappa}} =
  \int_{\Gamma_\Lambda} e^{\kappa |\eta|} R^{\Lambda, N}_0 (\eta)
  \lambda ( d\eta) \leq e^{\kappa N} \|R^{\Lambda,
  N}_0\|_{\mathcal{R}} \leq e^{\kappa N}.
\end{eqnarray}
Let $S^\sigma_\mathcal{R}$ be the stochastic semigroup on
$\mathcal{R}$ constructed in the proof of Theorem \ref{1ftm} with
$b$ replaced by $b_\sigma$. Recall that $R_t = S_\mathcal{R}(t) R_0$
is the solution of (\ref{L9}). Set
\begin{eqnarray}
  \label{Jan23}
R^{\Lambda, N}_t & = & S_\mathcal{R}^\sigma(t) R^{\Lambda, N}_0, \qquad t>0, \\[.2cm]
q^{\Lambda, N}_t (\eta) & = & \int_{\Gamma_0} R^{\Lambda, N}_t
(\eta\cup \xi) \lambda ( d \xi), \quad \eta \in \Gamma_0. \nonumber
\end{eqnarray}
\begin{proposition}
  \label{JJ1pn}
For each $\vartheta\in \mathds{R}$ and $t \in [0, \tau_\vartheta)$,
$\tau_\vartheta:= [ e \langle b \rangle ( 1+ e^\vartheta)]^{-1}$, it
follows that $q^{\Lambda, N}_t \in \mathcal{G}_\vartheta^{+}$.
Moreover,
\begin{equation}
  \label{Jan24}
\langle\!\langle G, q_t^{\Lambda,N} \rangle\!\rangle \geq 0
\end{equation}
holding for each $G\in B^{\star}_{\rm bs}(\Gamma_0)$ and all $t>0$.
\end{proposition}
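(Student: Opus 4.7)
The plan is to use the combinatorial identity (\ref{18}) to convert the $\mathcal{G}_\vartheta$-norm of $q_t^{\Lambda,N}$ into an exponential moment of $R_t^{\Lambda,N}$, and then invoke Theorem \ref{1ftm}(b) (applied to the $\sigma$-modified semigroup $S_\mathcal{R}^\sigma$ with initial density $R_0^{\Lambda,N}$) to control that moment. Pointwise positivity of $q_t^{\Lambda,N}$ is immediate: $S_\mathcal{R}^\sigma$ is stochastic and in particular positive, so $R_0^{\Lambda,N}\geq 0$ gives $R_t^{\Lambda,N}\geq 0$, and the defining integral (\ref{Jan23}) then yields $q_t^{\Lambda,N}\geq 0$ pointwise on $\Gamma_0$.

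For the norm, I would start from
\begin{equation*}
|q_t^{\Lambda,N}|_\vartheta=\int_{\Gamma_0}\int_{\Gamma_0} e^{\vartheta|\eta|} R_t^{\Lambda,N}(\eta\cup\xi)\,\lambda(d\xi)\,\lambda(d\eta),
\end{equation*}
apply (\ref{18}) so as to express the right-hand side as $\int_{\Gamma_0} R_t^{\Lambda,N}(\eta)\sum_{\xi\subset\eta} e^{\vartheta|\eta\setminus\xi|}\,\lambda(d\eta)$, and evaluate the inner sum by a binomial count: if $|\eta|=n$, then $\sum_{\xi\subset\eta} e^{\vartheta|\eta\setminus\xi|} = \sum_{k=0}^n\binom{n}{k}e^{\vartheta(n-k)} = (1+e^\vartheta)^n$. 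Setting $\kappa:=\log(1+e^\vartheta)$ this identifies $|q_t^{\Lambda,N}|_\vartheta = \|R_t^{\Lambda,N}\|_{\mathcal{R}_{\chi^\kappa}}$. By (\ref{Jan22}) and the bounded support of $R_0^{\Lambda,N}$, the initial density lies in $\mathcal{R}_{\chi^{\kappa_0}}$ for every $\kappa_0>0$, so Theorem \ref{1ftm}(b) (in its $S_\mathcal{R}^\sigma$ version) gives $R_t^{\Lambda,N}\in\mathcal{R}_{\chi^\kappa}$ for $t<(\kappa_0-\kappa)e^{-\kappa_0}/\langle b_\sigma\rangle$. Optimising over $\kappa_0$ yields the maximiser $\kappa_0=\kappa+1$ with value $e^{-\kappa-1}/\langle b_\sigma\rangle$; since $\langle b_\sigma\rangle\leq\langle b\rangle$ and $e^{-\kappa}=(1+e^\vartheta)^{-1}$, this lower-bounds the admissible time by exactly $\tau_\vartheta=[e\langle b\rangle(1+e^\vartheta)]^{-1}$, which proves $q_t^{\Lambda,N}\in\mathcal{G}_\vartheta^+$ on $[0,\tau_\vartheta)$.

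For the inequality (\ref{Jan24}), the key observation is that $q_t^{\Lambda,N}$ is the correlation function of the positive measure $R_t^{\Lambda,N}\lambda$ on $\Gamma_0$: applying (\ref{18}) to the definition of $\langle\!\langle G,q_t^{\Lambda,N}\rangle\!\rangle$ with the choice $G(\xi,\eta,\eta\setminus\xi)=G(\xi)R_t^{\Lambda,N}(\eta)$ produces
\begin{equation*}
\langle\!\langle G,q_t^{\Lambda,N}\rangle\!\rangle=\int_{\Gamma_0}(KG)(\eta)\,R_t^{\Lambda,N}(\eta)\,\lambda(d\eta).
\end{equation*}
For $G\in B^\star_{\rm bs}(\Gamma_0)$ the integrand is nonnegative, so the integral is $\geq 0$ as soon as it is finite; finiteness for every $t>0$ follows from the polynomial bound (\ref{10}) on $KG$ together with Theorem \ref{1ftm}(a), which guarantees $R_t^{\Lambda,N}\in\mathcal{R}_{\chi_m}$ for all $m\in\mathds{N}$ and all $t>0$ (the initial datum, being compactly supported and $|\eta|$-bounded, trivially lies in every $\mathcal{R}_{\chi_m}\cap\mathcal{P}(\Gamma_0)$ after normalisation, and the estimate is linear so the non-normalised density is handled the same way).

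The only delicate routine step is the optimisation over $\kappa_0$ that produces the precise constant $\tau_\vartheta$; everything else is a direct application of already-established tools (positivity and exponential-moment preservation from Theorem \ref{1ftm}, and the switch-of-summation identity (\ref{18})). I do not anticipate a genuine obstacle, and in particular the fact that the semigroup is governed by the smaller quantity $\langle b_\sigma\rangle\leq\langle b\rangle$ only makes the threshold in the statement conservative.
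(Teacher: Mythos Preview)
Your proposal is correct and follows essentially the same route as the paper: both arguments reduce $|q_t^{\Lambda,N}|_\vartheta$ to $\|R_t^{\Lambda,N}\|_{\mathcal{R}_{\chi^\kappa}}$ with $\kappa=\log(1+e^\vartheta)$ via the identity (\ref{18}), invoke the exponential-moment preservation from Theorem \ref{1ftm}(b), optimise over the initial exponent to obtain exactly $\tau_\vartheta$, and then handle (\ref{Jan24}) by rewriting $\langle\!\langle G,q_t^{\Lambda,N}\rangle\!\rangle=\langle\!\langle KG,R_t^{\Lambda,N}\rangle\!\rangle$ together with (\ref{10}) and Theorem \ref{1ftm}(a). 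Your explicit remark that $\langle b_\sigma\rangle\leq\langle b\rangle$ makes the stated threshold conservative is a nice clarification that the paper leaves implicit.
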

\begin{proof}
Since $S_\mathcal{R}^\sigma$ is stochastic and $R_0^{\Lambda,N}$ is
as in (\ref{Jan19}), then $R_t^{\Lambda,N}\in \mathcal{R}^{+}$ for
all $t>0$. Hence, $q^{\Lambda, N}_t(\eta) \geq 0$ for all those
$t>0$ for which the integral in the second line in (\ref{Jan23})
makes sense.  By (\ref{22T}) we have that $T(\kappa, \kappa')$, as a
function of $\kappa$, attains its maximum value $T_{\kappa'} =
e^{-\kappa'}/ e \langle b \rangle$ at $\kappa = \kappa'+1$. By
(\ref{Jan22}) we have that $R^{\Lambda, N}_0 \in
\mathcal{R}_{\chi^\kappa}$ for any $\kappa>0$. Then, for each
$\kappa>0$, by Proposition \ref{TVpn} it follows that $R^{\Lambda,
N}_t \in \mathcal{R}_{\chi^\kappa}$ for $t< T_\kappa$. Taking all
these fact into account we then get
\begin{eqnarray}
  \label{Jan25}
 |q_t^{\Lambda,N}|_\vartheta & = & \int_{\Gamma_0} e^{\vartheta|\eta|}
 q_t^{\Lambda,N} (\eta) \lambda ( d\eta)\\ & = & \nonumber \int_{\Gamma_0}
 \int_{\Gamma_0} e^{\vartheta|\eta|} R^{\Lambda, N}_t
(\eta\cup \xi) \lambda (d\eta)\lambda ( d \xi) \\ & = &
\int_{\Gamma_0} \left( 1 + e^\vartheta\right)^{|\eta|} R^{\Lambda,
N}_t (\eta) \lambda (d\eta) = \| R^{\Lambda,
N}_t\|_{\mathcal{R}_{\chi^\kappa}} \nonumber
\end{eqnarray}
with $\kappa = \log (1 +e^\vartheta)$. For these $\kappa$ and
$\vartheta$, we have that $T_\kappa = \tau_\vartheta$. Then
$q_t^{\Lambda,N}\in \mathcal{G}_\vartheta$ for $t< \tau_\vartheta$,
holding by (\ref{Jan25}). The existence of the integral in
(\ref{Jan24}) follows by the equality
\[
\langle \! \langle G, q^{\Lambda,N}_t \rangle \! \rangle = \langle
\! \langle K G, R^{\Lambda,N}_t \rangle \! \rangle,
\]
(\ref{10}) and the fact that $R^{\Lambda,N}_t \in
\mathcal{R}^{+}_{\chi_m}$ for all $t>0$ and $m\in \mathds{N}$, see
claims (a) and (c) of Theorem \ref{1ftm}. The validity of the
inequality in (\ref{Jan24}) is straightforward, cf. (\ref{Jan21}).
\end{proof}
\begin{corollary}
  \label{JJ1co}
For each $\alpha \in \mathds{R}$, it follows that
$q_0^{\Lambda,N}\in \mathcal{U}_{\sigma,\alpha}^{+}$.
\end{corollary}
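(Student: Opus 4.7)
The plan is direct: show that $q_0^{\Lambda,N}$ is supported on a set where all three factors in the norm (\ref{U3}) are uniformly controlled. First I would observe, from the definition (\ref{Jan19}), that $R_0^{\Lambda,N}(\zeta)=0$ unless $\zeta\subset\Lambda$ and $|\zeta|\leq N$. Plugging this into (\ref{Jan20}), the integrand $R_0^{\Lambda,N}(\eta\cup\xi)$ vanishes unless $\eta\cup\xi\subset\Lambda$ and $|\eta\cup\xi|\leq N$; in particular this forces $\eta\subset\Lambda$ and $|\eta|\leq N$. Thus $q_0^{\Lambda,N}$ is supported in $\{\eta\in\Gamma_\Lambda:|\eta|\leq N\}$, and nonnegativity of $R_0^{\Lambda,N}$ gives $q_0^{\Lambda,N}\geq 0$ pointwise.

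Next I would bound $q_0^{\Lambda,N}$ pointwise by the correlation function of $\mu_0$. Since $0\leq R_0^{\Lambda,N}\leq R_0^\Lambda$ and the latter is the Radon--Nikodym density of $\mu_0^\Lambda$, the identity (\ref{13}) gives
\begin{equation*}
0\leq q_0^{\Lambda,N}(\eta)\leq \int_{\Gamma_\Lambda}R_0^\Lambda(\eta\cup\xi)\lambda(d\xi)=k_{\mu_0}(\eta),\qquad \eta\in\Gamma_\Lambda.
\end{equation*}
Because $\mu_0\in\mathcal{P}_{\rm exp}(\Gamma)$, the estimate (\ref{6c}) yields $k_{\mu_0}(\eta)\leq \varkappa^{|\eta|}$ for some $\varkappa>0$.

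Finally, set $r_\Lambda:=\sup_{x\in\Lambda}|x|<\infty$, which is finite since $\Lambda$ is compact. For every $\eta$ in the support of $q_0^{\Lambda,N}$ we have $\sum_{x\in\eta}|x|^2\leq |\eta|r_\Lambda^2\leq Nr_\Lambda^2$, hence $e(\phi_\sigma;\eta)\geq e^{-\sigma Nr_\Lambda^2}$. Combining these estimates,
\begin{equation*}
\|q_0^{\Lambda,N}\|_{\sigma,\alpha}=\esssup_{\eta}\frac{q_0^{\Lambda,N}(\eta)e^{-\alpha|\eta|}}{e(\phi_\sigma;\eta)}\leq \max_{0\leq n\leq N}\varkappa^{n}e^{-\alpha n}e^{\sigma n r_\Lambda^2}<\infty,
\end{equation*}
so $q_0^{\Lambda,N}\in\mathcal{U}_{\sigma,\alpha}$, and together with the nonnegativity already established this places it in $\mathcal{U}_{\sigma,\alpha}^{+}$. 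No single step is a real obstacle; the only point requiring care is tracking the compact support in $\Lambda$ together with the cardinality cutoff, which makes the Gaussian weight $e(\phi_\sigma;\eta)$ bounded away from zero on the support.
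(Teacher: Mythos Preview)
Your proof is correct and follows essentially the same approach as the paper's: both identify the support of $q_0^{\Lambda,N}$ as $\{\eta\in\Gamma_\Lambda:|\eta|\leq N\}$, bound it there by $k_{\mu_0}(\eta)\leq\varkappa^{|\eta|}$ via (\ref{13}), and conclude. You are slightly more explicit than the paper in writing out the lower bound $e(\phi_\sigma;\eta)\geq e^{-\sigma N r_\Lambda^2}$ on the support, which the paper absorbs into the single remark ``Then $q_0^{\Lambda,N}\in \mathcal{U}_{\sigma,\alpha}$ by (\ref{U3}).''
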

\begin{proof}
Set $I_N(\eta)=1$ whenever $|\eta|\leq N$ and $I_N(\eta)=0$
otherwise. By (\ref{Jan19}), (\ref{Jan20}) and (\ref{13}) we have
that
\begin{eqnarray*}
q_0^{\Lambda,N}(\eta) & = &  I_N (\eta) \mathds{1}_{\Gamma_\Lambda}
(\eta) \int_{\Gamma_\Lambda}R_0^\Lambda(\eta
\cup\xi)\lambda (d \xi) \\[.2cm]  & = & k_0(\eta) I_N (\eta)
\mathds{1}_{\Gamma_\Lambda} (\eta)\leq  \varkappa^N I_N (\eta)
\mathds{1}_{\Gamma_\Lambda} (\eta).
\end{eqnarray*}
The latter estimate follows by the fact that $k_0=k_{\mu_0}$ for
some $\mu_0\in \mathcal{P}_{\rm exp}(\Gamma)$, and thus $k_0(\eta)
\leq
 \varkappa^{ |\eta|}$ for some $\varkappa>0$, see Definition \ref{0df} and (\ref{6c}). Then
$q_0^{\Lambda,N}\in \mathcal{U}_{\sigma,\alpha}$ by (\ref{U3}). The
stated positivity i immediate.
\end{proof}
By (\ref{x}) and Corollary \ref{JJ1co} we obtain that
$q_0^{\Lambda,N}\in \mathcal{G}_{\vartheta}^{+}$ for each
$\vartheta\in \mathds{R}$. Now we relate $q_t^{\Lambda,N}$ with
solutions of (\ref{Jan8}).
\begin{lemma}
  \label{JJ1lm}
For each $\vartheta \in \mathds{R}$, the map $[0, \tau_\vartheta)\ni
t\mapsto q_t^{\Lambda,N} \in \mathcal{G}_\vartheta$ is continuous
and continuously differentiable on $(0, \tau_\vartheta)$. Moreover,
$q_t^{\Lambda,N} \in \mathcal{D}_\vartheta$, see (\ref{37}), and
solves the problem in (\ref{Jan8}) on the time interval
$[0,\tau_\vartheta)$ with $q_0^{\Lambda,N}$ as the initial
condition.
\end{lemma}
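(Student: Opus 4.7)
The strategy is to view $q_t^{\Lambda,N}$ as the (unnormalised) correlation function associated with the finite measure $R_t^{\Lambda,N}\lambda$ on $\Gamma_0$, where $R_t^{\Lambda,N}=S^\sigma_{\mathcal R}(t)R_0^{\Lambda,N}$ solves (\ref{L9}) (with $b$ replaced by $b_\sigma$) by Theorem \ref{1ftm}. The evolution $\dot R=L^{\dagger,\sigma}R$ in $\mathcal R$ then translates, via an intertwining identity relating $L^{\dagger,\sigma}$ and $L^{\Delta,\sigma}$ across the transform $R\mapsto\int R(\cdot\cup\xi)\lambda(d\xi)$, into $\dot q=L^{\Delta,\sigma}q$; this intertwining is exactly the one encoded in the definition (\ref{20}) of $L^\Delta$.

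For continuity and $\mathcal G_\vartheta$-regularity, extending the calculation (\ref{Jan25}) gives $|q_t^{\Lambda,N}-q_s^{\Lambda,N}|_\vartheta=\|R_t^{\Lambda,N}-R_s^{\Lambda,N}\|_{\mathcal R_{\chi^\kappa}}$ with $\kappa:=\log(1+e^\vartheta)$. Since $R_0^{\Lambda,N}\in\mathcal R_{\chi^{\kappa_0}}$ for every $\kappa_0>0$ with $\|R_0^{\Lambda,N}\|_{\mathcal R_{\chi^{\kappa_0}}}\le e^{\kappa_0 N}$, claim (b) of Theorem \ref{1ftm} with the optimal choice $\kappa_0=\kappa+1$ (for which $T(\kappa+1,\kappa)=e^{-\kappa}/(e\langle b\rangle)=\tau_\vartheta$) produces continuity of $t\mapsto R_t^{\Lambda,N}\in\mathcal R_{\chi^\kappa}$ on $[0,\tau_\vartheta)$, and hence of $t\mapsto q_t^{\Lambda,N}\in\mathcal G_\vartheta$. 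For the membership $q_t^{\Lambda,N}\in\mathcal D_\vartheta$, applying Fubini and (\ref{18}), together with $\sum_{\xi\subset\zeta}e^{\vartheta|\xi|}=(1+e^\vartheta)^{|\zeta|}=e^{\kappa|\zeta|}$ and the monotonicity of $\Psi_\upsilon$ in $\eta$, I would estimate
\[
|\Psi_\upsilon q_t^{\Lambda,N}|_\vartheta\le\int_{\Gamma_0}\Psi_\upsilon(\zeta)R_t^{\Lambda,N}(\zeta)e^{\kappa|\zeta|}\lambda(d\zeta);
\]
for $t<\tau_\vartheta$, choosing $\kappa'>\kappa$ close enough to $\kappa$ so that $t<e^{-\kappa'}/(e\langle b\rangle)$ and using $\Psi_\upsilon(\zeta)e^{\kappa|\zeta|}\le C_{\kappa'}e^{\kappa'|\zeta|}$ together with $R_t^{\Lambda,N}\in\mathcal R_{\chi^{\kappa'}}$ (Theorem \ref{1ftm}(b)) shows that this bound is finite.

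For the evolution equation, the core is the intertwining identity
\[
\int_{\Gamma_0}(L^{\dagger,\sigma}R)(\eta\cup\xi)\lambda(d\xi)=(L^{\Delta,\sigma}q)(\eta),\qquad q(\eta):=\int_{\Gamma_0}R(\eta\cup\xi)\lambda(d\xi),
\]
which I would verify term by term for $R\in\mathcal D^\dagger$ using (\ref{17}) and (\ref{18}). Given this, since $\dot R_t^{\Lambda,N}=L^{\dagger,\sigma}R_t^{\Lambda,N}$ in $\mathcal R$ by Theorem \ref{1ftm}, differentiating under the integral sign (justified by uniform bounds of the same type as in the preceding paragraph on each compact subinterval of $[0,\tau_\vartheta)$) yields $\dot q_t^{\Lambda,N}=L^{\Delta,\sigma}_\vartheta q_t^{\Lambda,N}$ in $\mathcal G_\vartheta$.

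The principal obstacle is the term-by-term intertwining verification. The contribution $-\Psi_\upsilon(\eta\cup\xi) R(\eta\cup\xi)$ from $A_1^{\dagger,\sigma}$ does \emph{not} pass through the integral as $-\Psi_\upsilon(\eta)q(\eta)$, so the excess $\Psi_\upsilon(\eta\cup\xi)-\Psi_\upsilon(\eta)$ must be absorbed against the off-diagonal pieces of $A_2^{\dagger,\sigma}$, $B_1^\dagger$ and $B_{2,\upsilon}^{\dagger,\sigma}$; these cancellations are precisely those built into the definition of $L^\Delta$. The extra $\phi_\sigma$-factor in $b_\sigma$ does not affect the combinatorial bookkeeping. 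Once the identity is in place, the passage from the $\mathcal R$-level to the $\mathcal G_\vartheta$-level is routine.
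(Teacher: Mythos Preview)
Your proposal is correct and follows essentially the same route as the paper: continuity and differentiability are transported from $R_t^{\Lambda,N}$ in $\mathcal R_{\chi^\kappa}$ to $q_t^{\Lambda,N}$ in $\mathcal G_\vartheta$ via the transform (\ref{Jan23}), membership in $\mathcal D_\vartheta$ is obtained by the same polynomial-versus-exponential bound on $\Psi_\upsilon$, and the intertwining identity you describe is exactly what the paper verifies term by term in (\ref{Mar5})--(\ref{Mar12}). One minor slip: for the difference $q_t^{\Lambda,N}-q_s^{\Lambda,N}$ (which need not be sign-definite) your displayed identity is only the inequality $|q_t^{\Lambda,N}-q_s^{\Lambda,N}|_\vartheta\le\|R_t^{\Lambda,N}-R_s^{\Lambda,N}\|_{\mathcal R_{\chi^\kappa}}$, but this is all that is needed.
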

\begin{proof}
Fix an arbitrary $\vartheta \in \mathds{R}$. The stated continuity
of $t\mapsto q_t^{\Lambda,N}$ follows by (\ref{Jan23}). Let us prove
that $q_t^{\Lambda,N}$ be differentiable in $\mathcal{G}_\vartheta$
on $(0,\tau_\vartheta)$ and the following holds
\begin{equation}
  \label{Mar1}
  \dot{q}^{\Lambda,N}_t (\eta) = \int_{\Gamma_0} \dot{R}^{\Lambda,N}_t (\eta\cup
  \xi)\lambda(d\xi).
\end{equation}
For small enough $|\tau|$, we have
\begin{eqnarray}
  \label{Mar2}
& & \frac{1}{\tau} \left(q^{\Lambda,N}_{t+\tau} (\eta)-
q^{\Lambda,N}_t (\eta)\right) - \int_{\Gamma_0}
\dot{R}^{\Lambda,N}_t (\eta\cup
  \xi)\lambda(d\xi) \\[.2cm] & & \qquad = \int_{\Gamma_0} \left[\frac{1}{\tau} \left(R^{\Lambda,N}_{t+\tau} (\eta\cup\xi)- R^{\Lambda,N}_t
(\eta\cup\xi)\right) - \dot{R}^{\Lambda,N}_t (\eta\cup
  \xi) \right]\lambda(d\xi). \nonumber
\end{eqnarray}
Then by (\ref{18}) we get
\begin{eqnarray*}
\left\vert {\rm LHS}(\ref{Mar2})\right\vert_\vartheta \leq
\int_{\Gamma_0} \left(1+e^\vartheta \right)^{|\eta|}
\left\vert\frac{1}{\tau} \left(R^{\Lambda,N}_{t+\tau} (\eta)-
R^{\Lambda,N}_t (\eta)\right) - \dot{R}^{\Lambda,N}_t (\eta)
\right\vert\lambda(d\eta),
\end{eqnarray*}
that proves (\ref{Mar1}), cf. (\ref{Jan25}). The continuity of
$t\mapsto \dot{q}^{\Lambda,N}_t$ follows by (\ref{Mar1}) and the
fact that $R^{\Lambda,N}_t = S^\sigma_{\mathcal{R}}
(t)R^{\Lambda,N}_0$, which also yields that
\begin{equation}
  \label{Jan27}
\dot{q}_t^{\Lambda,N} (\eta) = \int_{\Gamma_0}
\left(L^{\dagger,\sigma}_\vartheta R_t^{\Lambda,N} \right) (\eta\cup
\xi) \lambda ( d\xi),
\end{equation}
where $L^{\dagger,\sigma}_\vartheta$ is the trace of
$L^{\dagger,\sigma}$  (the generator of $S^\sigma_{\mathcal{R}}$) in
$\mathcal{R}_{\chi^\kappa}$ with $\kappa = \log(1+e^\vartheta)$. By
(\ref{37}) it follows that $\Psi_\upsilon (\eta) \leq C_\varepsilon
e^{\varepsilon |\eta|}$ holding for an arbitrary $\varepsilon >0$
and the corresponding $C_\varepsilon>0$. For each $t<T_\kappa =
\tau_\vartheta$, one can pick $\kappa'>\kappa$ such that
$R_t^{\Lambda, N}\in \mathcal{R}_{\chi^{\kappa'}}$. For these $t$
and $\kappa'$, we thus pick $\varepsilon>0$ such that $1+
e^{\vartheta + \varepsilon} = e^{\kappa'}$, and then obtain, cf.
(\ref{Jan25}),
\begin{eqnarray}
  \label{Mar4}
|\Psi_\upsilon q_t^{\Lambda,N}|_\vartheta \leq C_\varepsilon
\|R^{\Lambda,N}_t\|_{\mathcal{R}^{\chi^{\kappa'}}}.
\end{eqnarray}
Hence, $q_t^{\Lambda,N}\in \mathcal{D}_\vartheta$ for this $t$. Let
us now prove that $q_t^{\Lambda,N}$ solves (\ref{Jan8}). In view of
(\ref{Jan27}), (\ref{L}) and (\ref{Mar4}), to this end it is enough
to prove that
\begin{eqnarray}
  \label{Mar5}
\left(L^\Delta q_t^{\Lambda,N}\right)(\eta) & = & - \int_{\Gamma_0}
\Psi(\eta\cup\xi) R^{\Lambda,N}_t (\eta \cup \xi) \lambda (d\xi)
\\[.2cm] \nonumber & + &\int_{\mathds{R}^d} \int_{\Gamma_0} \left( m(x)+ E^a(x, \eta \cup
\xi)\right)R^{\Lambda,N}_t (\eta \cup \xi\cup x) \lambda (d\xi) dx \\[.2cm] \nonumber & + &\int_{\mathds{R}^d}
\int_{\Gamma_0}\sum_{y_1\in \eta \cup\xi} \sum_{y_2\in \eta
\cup\xi\setminus y_1} b(x|y_1 , y_2) R^{\Lambda,N}_t (\eta \cup
\xi\cup x\setminus\{y_1,y_2\} ) \lambda (d\xi)d x,
\end{eqnarray}
holding point-wise in $\eta\in\Gamma_0$. By (\ref{22}) and
(\ref{19}) we get
\begin{equation}
  \label{Mar6}
 \Psi (\eta \cup \xi) = \Psi (\eta)+ \Psi (\xi) + 2 \sum_{x\in
 \eta}\sum_{y\in \xi} a(x-y).
\end{equation}
Let $I_1 (\eta)$ denote the first summand in the right-hand side of
(\ref{Mar5}). By (\ref{18}) and (\ref{Mar6}) we then write it as
follows
\begin{eqnarray}
  \label{Mar7}
I_1 (\eta) & = & - \Psi(\eta)q_t^{\Lambda,N} (\eta) - 2
\int_{\mathds{R}^d} E^a (x, \eta) q_t^{\Lambda,N} (\eta\cup x) dx
\\[.2cm]\nonumber & - & \int_{\Gamma_0}\Psi (\xi)R^{\Lambda,N}_t (\eta \cup \xi) \lambda
(d\xi).
\end{eqnarray}
To calculate the latter summand in (\ref{Mar7}) we again use
(\ref{22}) and (\ref{18}) to obtain the following:
\begin{eqnarray}
  \label{Mar8}
 \int_{\Gamma_0}\left( \sum_{x\in \xi} m(x) \right) R^{\Lambda,N}_t (\eta \cup
 \xi) \lambda (d \xi)& = & \int_{\Gamma_0} \int_{\mathds{R}^d} m(x) R^{\Lambda,N}_t (\eta \cup
 \xi\cup x) \lambda (d \xi) dx \qquad \quad \\[.2cm] \nonumber & = & \int_{\mathds{R}^d}
 m(x) q_t^{\Lambda,N} (\eta\cup x) dx.
\end{eqnarray}
\begin{eqnarray}
  \label{Mar9}
 & & \int_{\Gamma_0}\left( \sum_{x\in \xi}\sum_{y\in \xi\setminus x} a(x-y) \right) R^{\Lambda,N}_t (\eta \cup
 \xi) \lambda (d \xi)\\[.2cm] \nonumber & & \quad =  \int_{\Gamma_0} \int_{(\mathds{R}^d)^2} a(x-y) R^{\Lambda,N}_t (\eta \cup
 \xi\cup\{ x,y\}) \lambda (d \xi) dx d y  \\[.2cm] \nonumber & & \quad  =  \int_{(\mathds{R}^d)^2}
 a(x-y) q_t^{\Lambda,N} (\eta\cup \{x,y\}) dx d y .
\end{eqnarray}
\begin{eqnarray}
  \label{Mar10}
 \int_{\Gamma_0}\left(\langle b \rangle \sum_{x\in \xi} 1 \right) R^{\Lambda,N}_t (\eta \cup
 \xi) \lambda (d \xi) =  \langle b \rangle \int_{\mathds{R}^d} q^{\Lambda,N}_t (\eta
 \cup x) dx.
\end{eqnarray}
In a similar way, we get the second $I_2$ (resp. the third $I_3$)
summands of the right-hand side of (\ref{Mar5}) as follows
\begin{eqnarray}
  \label{Mar11}
I_2(\eta) & = & \int_{\mathds{R}^d} \left(m(x) + E^a (x, \eta)
\right)q^{\Lambda,N}_t (\eta
 \cup x) dx \\[.2cm] \nonumber & + & \int_{(\mathds{R}^d)^2} a (x-y)
q^{\Lambda,N}_t (\eta
 \cup\{ x,y\}) dx d y.
\end{eqnarray}
\begin{eqnarray}
  \label{Mar12}
I_3(\eta) & = & \int_{\mathds{R}^d} \sum_{y_1\in \eta} \sum_{y_2\in
\eta\setminus y_1} b(x|y_1, y_2)q^{\Lambda,N}_t (\eta
 \cup x\setminus \{y_1,y_2\}) dx  \\[.2cm] & + & \nonumber 2
 \int_{(\mathds{R}^d)^2} \sum_{y_1\in \eta}  b(x|y_1,y_2)
q^{\Lambda,N}_t (\eta \cup x\setminus y_1) dx  d y_2 \\[.2cm] \nonumber & + & \langle b \rangle  \int_{\mathds{R}^d}
q^{\Lambda,N}_t (\eta
 \cup x) dx .
\end{eqnarray}
Now we plug (\ref{Mar8}), (\ref{Mar9}) and (\ref{Mar10}) into
(\ref{Mar7}), and then use it together with (\ref{Mar11}) and
(\ref{Mar12}) in the right-hand side of (\ref{Mar5}) to get its
equality with the left-hand side, see (\ref{21}). This completes the
proof.
\end{proof}
\begin{corollary}
  \label{JJ2co}
Let $\alpha_1> -\log \omega$ and $\alpha_2 >\alpha_1$ be chosen.
Then $k^{\Lambda,N}_t = Q^\sigma_{\alpha_2\alpha_1} (t)
q_0^{\Lambda,N}$ has the property
\begin{equation}
  \label{Jan32}
\langle\! \langle G, k_t^{\Lambda,N} \rangle\! \rangle \geq 0,
\end{equation}
holding for all $G\in B_{\rm bs}^\star (\Gamma_0)$ and
$t<T(\alpha_2,\alpha_1)$.
\end{corollary}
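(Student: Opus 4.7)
The strategy is to identify $k_t^{\Lambda,N}$ with the pointwise-defined positive function $q_t^{\Lambda,N}$ from (\ref{Jan23}) and then invoke the inequality (\ref{Jan24}) of Proposition \ref{JJ1pn}. By Corollary \ref{JJ1co}, $q_0^{\Lambda,N} \in \mathcal{U}_{\sigma,\alpha_1}^{+}$; by Corollary \ref{U1co}, $k_t^{\Lambda,N} = Q^\sigma_{\alpha_2\alpha_1}(t) q_0^{\Lambda,N} = U^\sigma_{\alpha_2\alpha_1}(t) q_0^{\Lambda,N}$ lies in $\mathcal{U}_{\sigma,\alpha_2}$. In view of the continuous embedding (\ref{x}) and Corollary \ref{Jan2co}, this $k_t^{\Lambda,N}$ solves the Cauchy problem (\ref{Jan8}) in $\mathcal{G}_\vartheta$ for every $\vartheta \in \mathds{R}$ throughout the whole interval $[0, T(\alpha_2,\alpha_1))$.

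On the other hand, Lemma \ref{JJ1lm} asserts that the pointwise-defined $q_t^{\Lambda,N}$ solves the same Cauchy problem (\ref{Jan8}) in $\mathcal{G}_\vartheta$ on $[0, \tau_\vartheta)$. Since $q_0^{\Lambda,N}$ is compactly supported, it lies in $\mathcal{G}_{\vartheta'}$ for every $\vartheta' > \vartheta$, so Lemma \ref{Janln} (uniqueness) forces $k_t^{\Lambda,N} = q_t^{\Lambda,N}$ in $\mathcal{G}_\vartheta$ on $[0, \min(\tau_\vartheta, T(\alpha_2,\alpha_1)))$. Combined with (\ref{Jan24}), this yields (\ref{Jan32}) on this initial segment for all $G \in B^\star_{\rm bs}(\Gamma_0)$. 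To reach the whole interval, I would iterate: fix $t^\ast < T(\alpha_2,\alpha_1)$ and $\vartheta$, partition $[0,t^\ast]$ into $n$ pieces each shorter than $\tau_\vartheta$, and at the right endpoint $t_0$ of each piece take the common value $k_{t_0}^{\Lambda,N} = q_{t_0}^{\Lambda,N} \in \mathcal{U}_{\sigma,\alpha_2} \hookrightarrow \mathcal{G}_{\vartheta'}$ as the initial datum for the next step. The semigroup property of $Q^\sigma_{\alpha\alpha'}$ (Corollary \ref{JKco}) on the $k$-side and of $S^\sigma_\mathcal{R}$ on the $q$-side (underlying (\ref{Jan23})) ensure that both evolutions restart from this common value and satisfy (\ref{Jan8}) on the next piece; uniqueness identifies them there, and after $n$ steps the equality propagates over $[0,t^\ast]$.

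The main technical obstacle is verifying, at each restart, an analog of Lemma \ref{JJ1lm} with the new initial datum $R_{t_0}^{\Lambda,N}$, which after several steps is no longer compactly supported. The argument in Lemma \ref{JJ1lm} required an exponential moment $R_{t_0}^{\Lambda,N} \in \mathcal{R}_{\chi^\kappa}$ to control $|q_{t_0+s}^{\Lambda,N}|_\vartheta$ via (\ref{Jan25}). By Theorem \ref{1ftm}(a), $R_{t_0}^{\Lambda,N}$ belongs to every $\mathcal{R}_{\chi_m}$, and applying Theorem \ref{1ftm}(b) locally around $t_0$ (with $\kappa$ and the associated time window chosen appropriately after each restart) restores the exponential moment needed for (\ref{Jan25}) on a uniformly positive interval beyond $t_0$. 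This reproduces Lemma \ref{JJ1lm} at each stage of the iteration, closes the argument on all of $[0, T(\alpha_2,\alpha_1))$, and completes the proof.
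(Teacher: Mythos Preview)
Your strategy is exactly the paper's: establish $k_t^{\Lambda,N}=q_t^{\Lambda,N}$ via uniqueness for (\ref{Jan8}) in $\mathcal{G}_\vartheta$ on the initial segment $[0,\min(\tau_\vartheta,T(\alpha_2,\alpha_1)))$, then iterate to cover all of $[0,T(\alpha_2,\alpha_1))$, and conclude by (\ref{Jan24}). The paper compresses the iteration into one sentence (``$q_t^{\Lambda,N}$ lies in the initial space $\mathcal{G}_{\vartheta'}$ and hence can further be continued''), so your more explicit treatment is welcome.

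The weak point is your third paragraph. Invoking Theorem~\ref{1ftm}(b) alone to recover exponential moments of $R_{t_0}^{\Lambda,N}$ does not yield a \emph{uniformly} positive step: starting only from $R_{t_0}^{\Lambda,N}\in\mathcal{R}_{\chi^{\kappa'}}$, the next window is at most $\sup_{\kappa''<\kappa'}T(\kappa',\kappa'')=\kappa' e^{-\kappa'}/\langle b\rangle$, which tends to zero as the available $\kappa'$ decays along the iteration. The fix is already in your hands. At each restart you know $q_{t_0}^{\Lambda,N}=u_{t_0}\in\mathcal{U}_{\sigma,\alpha_2}$; since $R_{t_0}^{\Lambda,N}\ge 0$, the computation underlying (\ref{Jan25}) (run in the other direction via (\ref{18})) gives, for every $\kappa>0$,
\[
\|R_{t_0}^{\Lambda,N}\|_{\mathcal{R}_{\chi^\kappa}}
=\int_{\Gamma_0}(e^\kappa-1)^{|\xi|}\,q_{t_0}^{\Lambda,N}(\xi)\,\lambda(d\xi)
\le \|q_{t_0}^{\Lambda,N}\|_{\sigma,\alpha_2}\exp\bigl((e^\kappa-1)e^{\alpha_2}\langle\phi_\sigma\rangle\bigr)<\infty.
\]
Thus $R_{t_0}^{\Lambda,N}\in\bigcap_{\kappa>0}\mathcal{R}_{\chi^\kappa}$, exactly as at $t=0$, so Lemma~\ref{JJ1lm} restarts verbatim with step $\tau_\vartheta$ independent of $t_0$, and finitely many steps reach any $t^*<T(\alpha_2,\alpha_1)$. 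This is presumably what the paper's one-line continuation intends.
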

\begin{proof}
The proof of (\ref{Jan32}) will be done by showing that
$k^{\Lambda,N}_t= q^{\Lambda,N}_t$, for $t< T(\alpha_2,\alpha_1)$
and then by employing (\ref{Jan24}), which holds for all $t>0$.

By Corollary \ref{JJ1co} it follows that $q_0^{\Lambda,N}\in
\mathcal{U}_{\sigma,\alpha_1}$, and hence $u_t =
U^\sigma_{\alpha_2\alpha_1}(t)q_0^{\Lambda,N}$ is a unique solution
of (\ref{U10}), see Proposition \ref{U2pn}. By Lemma \ref{JJ1lm}
$q_t^{\Lambda,N}$ solves (\ref{Jan8}) on $[0,\tau_\vartheta)$, which
by Corollary \ref{Jan2co} yields $u_t=q_t^{\Lambda,N}$ for $t<
\min\{\tau_\vartheta; T(\alpha_2,\alpha_1)\}$. If $\tau_\vartheta <
T(\alpha_2,\alpha_1)$, we can continue $q_t^{\Lambda,N}$ beyond
$\tau_\vartheta$ by means of the following arguments. Since
$u_t=q_t^{\Lambda,N}$ lies in $\mathcal{U}_{\sigma,\alpha_2}$ for
all $t< \min\{\tau_\vartheta; T(\alpha_2,\alpha_1)\}$, by (\ref{x})
we get that $q_t^{\Lambda,N}$ lies in the initial space
$\mathcal{G}_{\vartheta'}$ and hence can further by continued. Thus,
$u_t=q_t^{\Lambda,N}$ for all $t< T(\alpha_2,\alpha_1)$. Now by
(\ref{U11}) we get $q_t^{\Lambda,N}=u_t= k^{\Lambda,N}_t$, that
completes the proof.
\end{proof}

\subsection{Taking the limits}

We prove that (\ref{Jan32}) holds when the approximation is removed.
Recall that $k_t^{\Lambda,N}$ in (\ref{Jan32}) depends on
$\sigma>0$, $\Lambda$ and $N$. We first take the limits $\Lambda\to
\mathds{R}^d$ and $N\to +\infty$. Below, by an exhausting sequence
$\{\Lambda_n\}_{n\in \mathds{N}}$ we mean a sequence of compact
$\Lambda_n$ such that: (a) $\Lambda_n\subset \Lambda_{n+1}$ for all
$n$; (b) for each $x\in \mathds{R}^d$, there exits $n$ such that
$x\in \Lambda_n$.
\begin{proposition}
  \label{JJ10pn}
Let $\alpha_1>-\log \omega$, $\alpha_2>\alpha_1$ and $k_0\in
\mathcal{K}^\star_{\alpha_1}$  be fixed. For these $\alpha_1$,
$\alpha_2$ and $t< T(\alpha_2 , \alpha_1)$, let $k_t^{\Lambda,N}$
and $Q^\sigma_{\alpha_2\alpha_1}(t)$ be the same as in Corollary
\ref{JJ2co} and (\ref{Jan18}), respectively. Then, for each $G\in
B_{\rm bs}(\Gamma_0)$ and any $t<T(\alpha_2,\alpha_1)$, the
following holds
\[
\lim_{n\to +\infty} \lim_{l\to +\infty} \langle \!\langle G,
k_t^{\Lambda_n, N_l}\rangle \!\rangle = \langle \!\langle G,
Q^\sigma_{\alpha_2\alpha_1} (t) k_0\rangle \!\rangle,
\]
for arbitrary exhausting $\{\Lambda_n\}_{n\in \mathds{N}}$ and
increasing $\{N_l\}_{l\in \mathds{N}}$ sequences of sets and
positive integers, respectively.
\end{proposition}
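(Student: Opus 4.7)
The plan is to convert the convergence claim into a weak-convergence statement in the predual $L^1$-like space and then appeal to Lebesgue's dominated convergence. The duality relation (\ref{Du5}) (or rather its straightforward $\sigma$-analogue, obtained by repeating the construction of Proposition \ref{Du1pn} with $b$ replaced by $b_\sigma$) gives a bounded operator $H^\sigma_{\alpha_1\alpha_2}(t)\in\mathcal{L}(\mathcal{G}_{\alpha_2},\mathcal{G}_{\alpha_1})$ for $t<T(\alpha_2,\alpha_1)$ such that
\[
\langle\!\langle G,Q^\sigma_{\alpha_2\alpha_1}(t)k\rangle\!\rangle=\langle\!\langle \tilde G,k\rangle\!\rangle,\qquad \tilde G:=H^\sigma_{\alpha_1\alpha_2}(t)G,
\]
holding for every $k\in\mathcal{K}_{\alpha_1}$. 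Any $G\in B_{\rm bs}(\Gamma_0)$ is bounded and vanishes outside some $\Gamma_{\Lambda_G}$, hence lies in all $\mathcal{G}_\alpha$; consequently $\tilde G\in\mathcal{G}_{\alpha_1}$. After applying this identity with $k=q_0^{\Lambda_n,N_l}$ and with $k=k_0$, the proposition is reduced to proving
\[
\lim_{n\to\infty}\lim_{l\to\infty}\int_{\Gamma_0}\tilde G(\eta)\,q_0^{\Lambda_n,N_l}(\eta)\,\lambda(d\eta)=\int_{\Gamma_0}\tilde G(\eta)\,k_0(\eta)\,\lambda(d\eta).
\]

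Next, I would establish pointwise convergence of the densities. By (\ref{Jan19}) and (\ref{Jan20}), and the observation that $\eta\cup\xi\in\Gamma_\Lambda$ requires both $\eta\subset\Lambda$ and $\xi\subset\Lambda$, one has
\[
q_0^{\Lambda,N}(\eta)=\mathds{1}_{\Gamma_\Lambda}(\eta)\int_{\Gamma_\Lambda}R_0^\Lambda(\eta\cup\xi)\,\mathds{1}_{|\eta\cup\xi|\le N}\,\lambda(d\xi).
\]
For fixed $\eta\subset\Lambda$ the monotone convergence theorem (applied to the nonnegative integrand as $N=N_l\to\infty$) together with (\ref{13}) yields $\lim_{l\to\infty}q_0^{\Lambda,N_l}(\eta)=k_0(\eta)$. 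Taking then $\Lambda=\Lambda_n$ and using that each $\eta\in\Gamma_0$ is contained in $\Lambda_n$ for all sufficiently large $n$ (since $\{\Lambda_n\}$ is exhausting and $\eta$ is finite), the iterated limit $\lim_{n}\lim_{l}q_0^{\Lambda_n,N_l}(\eta)=k_0(\eta)$ follows pointwise on $\Gamma_0$.

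Finally, to pass pointwise convergence into convergence of the integrals, I would apply dominated convergence. Since the truncation in (\ref{Jan19}) only suppresses a nonnegative integrand, for every $\Lambda,N$ and every $\eta$ one has the uniform domination
\[
0\le q_0^{\Lambda,N}(\eta)\le \int_{\Gamma_0}R_0^\Lambda(\eta\cup\xi)\,\lambda(d\xi)=k_0(\eta)\le \|k_0\|_{\alpha_1}\,e^{\alpha_1|\eta|},
\]
by (\ref{13}), (\ref{nk}) and the assumption $k_0\in\mathcal{K}^\star_{\alpha_1}\subset\mathcal{K}_{\alpha_1}$. Thus the integrand $\tilde G(\eta)\,q_0^{\Lambda_n,N_l}(\eta)$ is majorized, in absolute value, by $|\tilde G(\eta)|\,\|k_0\|_{\alpha_1}\,e^{\alpha_1|\eta|}$, whose $\lambda$-integral equals $\|k_0\|_{\alpha_1}\,|\tilde G|_{\alpha_1}<\infty$ because $\tilde G\in\mathcal{G}_{\alpha_1}$. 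Dominated convergence then delivers the required convergence of the integrals and completes the proof.

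The main obstacle is the first step: one must justify that the duality of Proposition \ref{Du1pn} persists when $b$ is replaced by $b_\sigma$ throughout. This is not deep—each estimate in (\ref{Jan11})--(\ref{Jan9}) and in (\ref{47}) continues to hold after this substitution because $b_\sigma\le b$ pointwise and $\int b_\sigma(x|y_1,y_2)dy_1dy_2\le \langle b\rangle$—but it requires carefully retracing the predual construction. Once that is in place, the remainder is purely a standard pointwise-plus-domination argument.
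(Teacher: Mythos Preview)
Your argument is correct. The paper does not give its own proof here but merely refers to the Appendix of \cite{Berns}; your route---transfer the evolution to the predual via the $\sigma$-analogue of Proposition~\ref{Du1pn}, then use the pointwise convergence $q_0^{\Lambda_n,N_l}\to k_0$ together with the uniform domination $0\le q_0^{\Lambda,N}\le k_0\le\|k_0\|_{\alpha_1}e^{\alpha_1|\cdot|}$ and Lebesgue's theorem---is precisely the standard scheme used in that reference, so your proof is in line with what the paper intends.
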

The proof of this statement can be performed by the literal
repetition of the proof of a similar statement given in Appendix of
\cite{Berns}.

Recall that, for $\alpha_2 > \alpha_1$,  $T(\alpha_2,\alpha_1)$ was
defined in (\ref{34}). For these, $\alpha_2$, $\alpha_1$, we set
\begin{equation}
  \label{Jan41}
\alpha= \frac{1}{3}\alpha_2 + \frac{2}{3}\alpha_1, \quad \ \alpha'=
\frac{2}{3}\alpha_2 + \frac{1}{4}\alpha_1.
\end{equation}
Clearly,
\begin{equation}
  \label{Jan40}
\tau(\alpha_2, \alpha_1):= \frac{1}{3} T(\alpha_2, \alpha_1) <
\min\{ T(\alpha_2 , \alpha');  T(\alpha , \alpha_1)\}.
\end{equation}
\begin{lemma}
  \label{JJ10lm}
Let $\alpha_1$, $\alpha_2$ and $k_0$ be as in Proposition
\ref{JJ10pn}, and let $k_t$ be the solution of (\ref{33}). Then for
each $G\in B_{\rm bs}(\Gamma_0)$ and $t\in
[0,\tau(\alpha_2,\alpha_1)]$, the following holds
\begin{equation}
  \label{Jan42}
\lim_{\sigma \to 0^+} \langle \!\langle G,
Q^\sigma_{\alpha_2\alpha_1} (t) k_0\rangle \!\rangle = \langle
\!\langle G,  k_t\rangle \!\rangle.
\end{equation}
\end{lemma}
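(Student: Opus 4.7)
My approach combines a Duhamel identity with the predual duality of Proposition \ref{Du1pn}: I write $k_t^\sigma - k_t$ as an integral that isolates the difference $L^{\Delta,\sigma}-L^\Delta$, then transfer the $\sigma$-independent operator $Q$ to the test-function side via Proposition \ref{Du1pn}, and close with dominated convergence exploiting the pointwise vanishing of $\phi_\sigma(y_1)\phi_\sigma(y_2)-1$ as $\sigma\to 0^+$.

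Using the triple $\alpha_1<\alpha<\alpha'<\alpha_2$ of (\ref{Jan41}) and $\tau$ of (\ref{Jan40})---the inequalities in (\ref{Jan40}) are precisely what guarantees simultaneous existence of $Q^\sigma_{\alpha\alpha_1}(s)$ and $Q_{\alpha_2\alpha'}(t-s)$ for every $s\in[0,t]$---I would first establish, in $\mathcal{K}_{\alpha_2}$, the Duhamel identity
\begin{equation*}
Q^\sigma_{\alpha_2\alpha_1}(t) k_0 - Q_{\alpha_2\alpha_1}(t) k_0 = \int_0^t Q_{\alpha_2\alpha'}(t-s)\bigl(L^{\Delta,\sigma}-L^\Delta\bigr)_{\alpha'\alpha}\, Q^\sigma_{\alpha\alpha_1}(s)\, k_0\, ds.
\end{equation*}
This is obtained by differentiating $s\mapsto Q_{\alpha_2\alpha'}(t-s)\,Q^\sigma_{\alpha\alpha_1}(s)\,k_0$ via Lemma \ref{3lm}(d) and its $\sigma$-analog, combined with the semigroup--generator commutation. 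The bounded operator $(L^{\Delta,\sigma}-L^\Delta)_{\alpha'\alpha}\colon\mathcal{K}_\alpha\to\mathcal{K}_{\alpha'}$ is controlled by the estimates (\ref{27})--(\ref{28}) applied to $b_\sigma$, whose constants $\langle b_\sigma\rangle\le\langle b\rangle$ and $\beta^*_\sigma\le\beta^*$ are uniform in $\sigma$.

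Pairing with $G\in B_{\rm bs}(\Gamma_0)\subset\mathcal{G}_{\alpha_2}$ and applying (\ref{Du5}) to the $\sigma$-independent operator $Q_{\alpha_2\alpha'}(t-s)$ gives
\begin{equation*}
\langle\!\langle G,\, k_t^\sigma - k_t\rangle\!\rangle = \int_0^t \langle\!\langle F_{t-s},\, v_s^\sigma\rangle\!\rangle\, ds,\quad F_{t-s}:=H_{\alpha'\alpha_2}(t-s)G,\quad v_s^\sigma := (L^{\Delta,\sigma}-L^\Delta)_{\alpha'\alpha}\, Q^\sigma_{\alpha\alpha_1}(s)\, k_0.
\end{equation*}
Here $F_{t-s}\in\mathcal{G}_{\alpha'}$ is independent of $\sigma$ and $|F_{t-s}|_{\alpha'}$ is uniformly bounded on $s\in[0,t]$ by Proposition \ref{Du1pn}(a). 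Lemma \ref{3lm}(c) (with $\sigma$-uniform constants) gives $\|Q^\sigma_{\alpha\alpha_1}(s) k_0\|_{\alpha}\le C$, hence $\|v_s^\sigma\|_{\alpha'}\le C'$ uniformly in $\sigma$ and $s$. From the explicit form of $(L^{\Delta,\sigma}-L^\Delta)$, in which the kernel $b$ is multiplied by $\phi_\sigma(y_1)\phi_\sigma(y_2)-1$, at each fixed $\eta\in\Gamma_0$ the sums over $y_1,y_2\in\eta$ are finite, each such factor tends to $0$, and the inner $x$-integrals against $b$ stay bounded by item (iii) of Assumption \ref{ass1}. Thus $v_s^\sigma(\eta)\to 0$ pointwise in $\eta$. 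The envelope $C'|F_{t-s}(\eta)|e^{\alpha'|\eta|}\in L^1(\Gamma_0,\lambda)$ legitimises dominated convergence in $\eta$, yielding $\langle\!\langle F_{t-s}, v_s^\sigma\rangle\!\rangle\to 0$ for each $s$; a second dominated-convergence argument in $s$, justified by $|\langle\!\langle F_{t-s}, v_s^\sigma\rangle\!\rangle|\le C'|F_{t-s}|_{\alpha'}$, then delivers (\ref{Jan42}).

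The main obstacle is the first step: carefully differentiating the composition across a scale of Banach spaces while checking that every factor remains in its prescribed space on the whole interval $[0,t]$. The nested exponents $\alpha_1<\alpha<\alpha'<\alpha_2$ of (\ref{Jan41}) together with the smallness $\tau=T(\alpha_2,\alpha_1)/3$ of (\ref{Jan40}) are engineered so that the admissibility windows of $Q^\sigma_{\alpha\alpha_1}$ and $Q_{\alpha_2\alpha'}$ both cover $[0,t]$, and the bounded operator $(L^{\Delta,\sigma}-L^\Delta)_{\alpha'\alpha}$ is exactly the bridge between $\mathcal{K}_\alpha$ and $\mathcal{K}_{\alpha'}$ needed to splice them together.
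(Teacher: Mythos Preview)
Your approach is essentially the paper's: the same Duhamel identity (which the paper writes as $k_t = k_t^\sigma + \Upsilon_1 + \Upsilon_2$ and verifies by $t$-differentiation rather than your $s$-differentiation), the same transfer of $Q_{\alpha_2\alpha'}(t-s)$ to the test-function side via (\ref{Du5}), and the same dominated-convergence finish.

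One point to tighten: your pointwise-in-$\eta$ argument for $v_s^\sigma(\eta)\to 0$ is phrased as ``the sums over $y_1,y_2\in\eta$ are finite,'' but in the $B_2^\Delta$-part only $y_1$ ranges over $\eta$ while $y_2$ is integrated over $\mathds{R}^d$; you therefore need an inner dominated-convergence step in $(x,y_2)$ (the dominating function $b(x|y_1,y_2)$ has $\int\!\!\int b\,dx\,dy_2=\langle b\rangle$) before concluding pointwise convergence in $\eta$. The paper handles this uniformly by first applying (\ref{18}) to move $y_1,y_2$ from $\eta$ into genuine integration variables and then running dominated convergence directly in $(s,y_1,y_2)$ against the $L^1$ function $g_s(y_1,y_2)$ of (\ref{Jan49})--(\ref{Jan50}); this avoids the nested limit and treats the $A_2$ and $B_2$ contributions symmetrically.
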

\begin{proof}
We recall that the solution of (\ref{33}) is
$k_t=Q_{\alpha_2\alpha_1}(t)k_0$ with $Q_{\alpha_2\alpha_1}(t)$
given in (\ref{Jan3}) and $t\leq T(\alpha_2, \alpha_1)$, see Lemma
\ref{1lm}.  For $\alpha$ and $\alpha'$ as in (\ref{Jan41}) and
$t\leq \tau(\alpha_2, \alpha_1)$, write
\begin{eqnarray}
  \label{Jan43}
Q_{\alpha_2\alpha_1}(t) k_0 & = & Q^\sigma_{\alpha_2\alpha_1}(t) k_0
+
\Upsilon_{1}(t,\sigma) + \Upsilon_{2}(t,\sigma), \\[.2cm]
\nonumber \Upsilon_{1}(t,\sigma)& = & \int_0^t
Q_{\alpha_2\alpha'}(t-s) \left[(A^\Delta_2)_{\alpha'\alpha} -
(A^{\Delta.\sigma}_2)_{\alpha'\alpha}
\right]Q^\sigma_{\alpha\alpha_1}(s) k_0 d s,
, \\[.2cm]
\nonumber \Upsilon_{2}(t,\sigma)& = & \int_0^t
Q_{\alpha_2\alpha'}(t-s) \left[(B^\Delta_2)_{\alpha'\alpha} -
(B^{\Delta,\sigma}_2)_{\alpha'\alpha}
\right]Q^\sigma_{\alpha\alpha_1}(s) k_0 d s.
\end{eqnarray}
The validity of (\ref{Jan43}) is verified by taking the
$t$-derivatives from both sides and then by using e.g., (\ref{54b}).
Note that the norms of the operators $(A^\Delta_2)_{\alpha'\alpha}$.
$(B^\Delta_2)_{\alpha'\alpha}$,
$(A^{\Delta,\sigma}_2)_{\alpha'\alpha}$,
$(B^{\Delta.\sigma}_2)_{\alpha'\alpha}$ can be estimated as in
(\ref{48}). For $G$ as in (\ref{Jan42}), we then have
\begin{equation}
  \label{Jan44}
\langle \!\langle G, Q_{\alpha_2\alpha_1}(t) k_0\rangle \!\rangle -
\langle \!\langle G, Q^\sigma_{\alpha_2\alpha_1}(t)  k_0\rangle
\!\rangle = \langle \!\langle G, \Upsilon_1(t,\sigma) \rangle
\!\rangle + \langle \!\langle G, \Upsilon_2(t,\sigma) \rangle
\!\rangle.
\end{equation}
By (\ref{Du5}) and (\ref{Jan43}) it follows that
\begin{eqnarray*}
\langle \!\langle G, \Upsilon_1(t,\sigma) \rangle \!\rangle & = &
\int_0^t \langle \!\langle G, Q_{\alpha_2\alpha'}(t-s)
\left[(A^\Delta_2)_{\alpha'\alpha} -
(A^{\Delta.\sigma}_2)_{\alpha'\alpha}
\right]Q^\sigma_{\alpha\alpha_1}(s) k_0   \rangle \!\rangle d s
\qquad \\[.2cm] \nonumber &=& \int_0^t \langle \!\langle H_{\alpha'\alpha_2}(t-s)
G, v_s^\sigma  \rangle \!\rangle d s = \int_0^t \langle \!\langle
G_{t-s}, v_s^\sigma  \rangle \!\rangle d s,
\end{eqnarray*}
where
\begin{equation*}
v^\sigma_s =\left[(A^\Delta_2)_{\alpha'\alpha} -
(A^{\Delta.\sigma}_2)_{\alpha'\alpha} \right]k^\sigma_s :=
\left[(A^\Delta_2)_{\alpha'\alpha} -
(A^{\Delta.\sigma}_2)_{\alpha'\alpha}
\right]Q^\sigma_{\alpha\alpha_1}(s) k_0 \in \mathcal{K}_{\alpha'},
\end{equation*}
and
\begin{equation}
  \label{Jan46a}
G_{t-s} = H_{\alpha'\alpha_2}(t-s) G \in \mathcal{G}_{\alpha'},
\end{equation}
which makes sense since obviously $G\in \mathcal{G}_{\alpha_2}$. In
view of (\ref{21}) we then get
\begin{eqnarray}
  \label{Jan47}
& & \int_0^t \langle \!\langle G_{t-s}, v_s^\sigma  \rangle
\!\rangle d s =  \int_{\Gamma_0} G_{t-s} (\eta)
\bigg{(}\int_{\mathds{R}^d} \sum_{y_1\in\eta} \sum_{y_2\in
\eta\setminus y_1} k^\sigma_s (\eta
\cup x\setminus\{y_1,y_2\})  \\[.2cm] \nonumber & & \qquad \times  \left[1-\phi_\sigma (y_1) \phi_\sigma (y_2)\right] b(x|y_1, y_2)dx
\bigg{)} \lambda ( d\eta)   \\[.2cm] \nonumber & &\qquad = \int_{\Gamma_0}
\bigg{(} \int_{(\mathds{R}^d)^3}G_{t-s} (\eta\cup\{y_1,y_2\})
k^\sigma_s (\eta \cup x)\\[.2cm] \nonumber & & \qquad \times  \left[1-\phi_\sigma (y_1) \phi_\sigma
(y_2)\right] b(x|y_1, y_2) d x dy_1 dy_2 \bigg{)} \lambda (d\eta).
\end{eqnarray}
Since $k^\sigma_s = Q^\sigma_{\alpha\alpha_1}(s) k_0$ is in
$\mathcal{K}_\alpha$, we have that
\begin{equation}
  \label{Jan48}
|k^\sigma_s (\eta \cup x)| \leq \|k^\sigma_s\|_\alpha e^{\alpha
|\eta| +\alpha} \leq  e^{\alpha |\eta| +\alpha} \frac{T(\alpha,
\alpha_1) \|k_0\|_{\alpha_1}}{ T(\alpha, \alpha_1) -
\tau(\alpha_2,\alpha_1)},
\end{equation}
where $\alpha $ is as in (\ref{Jan41}) and $s\leq t \leq
\tau(\alpha_2 , \alpha_1)$. Now for $s\leq t$, we set
\begin{equation}
  \label{Jan49}
g_s (y_1, y_2) = \int_{\Gamma_0} e^{\alpha |\eta|} |G_{s}
(\eta\cup\{y_1,y_2\})| \lambda (d \eta).
\end{equation}
Let us show that $g_s \in L^1((\mathds{R}^d)^2)$. By (\ref{Jan46a})
we have
\begin{eqnarray}
  \label{Jan50}
& & \int_{(\mathds{R}^d)^2} g_s (y_1, y_2) d y_1 d y_2  =
e^{-2\alpha} \int_{\Gamma_0} |\eta|(|\eta|-1)
e^{-(\alpha'-\alpha)|\eta|} |G_s(\eta)| e^{\alpha'|\eta|} \lambda
(d\eta)\qquad  \\[.2cm]\nonumber & & \quad \qquad  \leq\frac{4 e^{-2\alpha -
2}}{(\alpha'- \alpha)^2} |G_s|_{\alpha'} \leq \frac{4 e^{-2\alpha -
2}T(\alpha_2 , \alpha')|G|_{\alpha_2}}{(\alpha'-
\alpha)^2[T(\alpha_2, \alpha')-\tau(\alpha_2 ,\alpha_1)]}.
\end{eqnarray}
Turn now to (\ref{Jan47}). By means of  item (iv) of Assumption
\ref{ass1} and by (\ref{Jan48}) and (\ref{Jan49}) we get
\begin{eqnarray*}
& & \int_0^t \left\vert\langle \!\langle G_{t-s}, v_s^\sigma \rangle
\!\rangle\right\vert d s \\[.2cm] \nonumber & & \qquad  \leq \beta^* C(\alpha_2,
\alpha_1)\|k_0\|_{\alpha_1}\int_0^t \int_{(\mathds{R}^d)^2} g_s
(y_1, y_2)\left[1-\phi_\sigma (y_1) \phi_\sigma (y_2)\right] ds d
y_1 d y_2,
 \end{eqnarray*}
where we have taken into account that $\alpha$ and $\alpha'$ are
expressed through $\alpha_2$ and $\alpha_1$, see (\ref{Jan41}). Then
the function under the latter integral is bounded from above by
$g_s(y_1, y_2)$ which by (\ref{Jan50}) is integrable on $[0,t]\times
(\mathds{R}^d)^2$. Since this function converges point-wise to $0$
as $\sigma \to 0^+$, by Lebesgue's dominated convergence theorem we
get that
\begin{equation*}
\langle \!\langle G, \Upsilon_1(t,\sigma) \rangle \!\rangle \to 0 ,
\qquad {\rm as} \ \ \sigma\to 0^{+}.
\end{equation*}
The proof that the second summand in the right-hand side of
(\ref{Jan44}) vanishes in the limit $\sigma\to 0^{+}$ is pretty
analogous.
\end{proof}
{\it Proof of Lemma \ref{ILlm}.} By (\ref{32}) and Proposition
\ref{1pn} we have that each $k_0\in \mathcal{K}^\star_{\alpha_1}$ is
the correlation function of some $\mu_0\in \mathcal{P}_{\rm
exp}(\Gamma_0)$. By (\ref{21}) we readily conclude that
\[
\dot{k}_t (\varnothing) = (L^\Delta_{\alpha_2} k_t)(\varnothing)=0.
\]
Hence, $k_t(\varnothing)=k_0(\varnothing)=1$. At the same time, for
$t\leq\tau(\alpha_2, \alpha_1)$ given in (\ref{Jan40}), we have that
$$\langle \!\langle G, k_t \rangle\!\rangle = \lim_{\sigma\to
0^+}\lim_{n\to +\infty} \lim_{l\to +\infty} \langle \!\langle G,
k_t^{\Lambda_n, N_l} \rangle\!\rangle,$$ that follows by Lemma
\ref{JJ10lm} and Proposition \ref{JJ10pn}. Then $\langle \!\langle
G, k_t \rangle\!\rangle \geq 0$ by (\ref{Jan32}) that completes the
proof. \hfill{$\square$}

\section{The Global Solution}

\label{Sec6}

In this section, we continue the solution obtained in Lemma
\ref{1lm} to all $t>0$ and thus prove that it satisfies the upper
bound following from property (i) in Theorem \ref{1tm}.

\subsection{Comparison statements}

Note that the time bound $T(\alpha, \alpha_1)$ defined in (\ref{34})
is a bounded function of $\alpha >\alpha_1$. Then the solution
obtained in Lemma \ref{1lm} may abandon the scale of spaces
$\{\mathcal{K}_\alpha\}_{\alpha \in \mathds{R}}$ in finite time. To
overcome this difficulty we compare $k_t$ with some auxiliary
functions.
\begin{lemma}
  \label{complm}
Let $\alpha_2$, $\alpha_1$ and $\tau(\alpha_2,\alpha_1)$ be as in
Lemma \ref{ILlm}. Then for each $t \in [0, \tau
(\alpha_2,\alpha_1)]$ and arbitrary $k_0 \in
\mathcal{K}_{\alpha_1}^\star$, the following holds
\begin{equation}
\label{59} 0 \le (Q_{\alpha_2 \alpha_1}(t;
B_\upsilon^\Delta)k_0)(\eta) \le (Q_{\alpha_2 \alpha_1}(t;
B_{2,\upsilon}^\Delta)k_0)(\eta), \qquad \eta \in \Gamma_0.
\end{equation}
\end{lemma}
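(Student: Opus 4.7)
The plan is to establish the two inequalities in (\ref{59}) separately, both on the time interval $[0, \tau(\alpha_2, \alpha_1)]$.

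The lower bound $(Q_{\alpha_2\alpha_1}(t; B_\upsilon^\Delta) k_0)(\eta) \geq 0$ is immediate from the Identification Lemma: by Lemma \ref{ILlm} we have $k_t := Q_{\alpha_2\alpha_1}(t; B_\upsilon^\Delta) k_0 \in \mathcal{K}^\star_{\alpha_2}$ for $t \in [0, \tau(\alpha_2, \alpha_1)]$, and the inclusion $\mathcal{K}^\star_{\alpha_2} \subset \mathcal{K}^+_{\alpha_2}$ recorded in (\ref{32a}) yields $k_t(\eta) \geq 0$ almost everywhere.

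For the upper bound, set $h_t := Q_{\alpha_2\alpha_1}(t; B_{2,\upsilon}^\Delta) k_0$ and recall from (\ref{45})--(\ref{46}) that $B_\upsilon^\Delta = B_1^\Delta + B_{2,\upsilon}^\Delta$. Since the two families satisfy evolutions whose generators differ only by the term $B_1^\Delta$, a variation-of-constants argument should produce
\[
h_t - k_t \;=\; \int_0^t Q_{\alpha_2 \alpha'}(t-s;\, B_{2,\upsilon}^\Delta)\, \bigl(-(B_1^\Delta)_{\alpha' \alpha}\bigr)\, Q_{\alpha \alpha_1}(s;\, B_\upsilon^\Delta)\, k_0\, ds,
\]
with $\alpha_1 < \alpha < \alpha' < \alpha_2$ chosen as trisection points of $[\alpha_1, \alpha_2]$, in the spirit of (\ref{Jan41}). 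The value $\tau(\alpha_2,\alpha_1) = T(\alpha_2,\alpha_1)/3$ is exactly what is needed so that each of $Q_{\alpha\alpha_1}(s; B_\upsilon^\Delta)$, $(B_1^\Delta)_{\alpha' \alpha}$, and $Q_{\alpha_2\alpha'}(t-s; B_{2,\upsilon}^\Delta)$ lies in its window of validity for every $s \in [0,t]$ with $t \leq \tau$; here one also uses (\ref{49}), by which $\varpi(\cdot; B_{2,\upsilon}^\Delta) \leq \varpi(\cdot; B_\upsilon^\Delta)$, so that the time windows for $Q(\cdot; B_{2,\upsilon}^\Delta)$ are never shorter than those for $Q(\cdot; B_\upsilon^\Delta)$.

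Once this integral representation is in hand, the conclusion is purely a matter of positivity. From (\ref{21}) one reads
\[
(-B_1^\Delta k)(\eta) \;=\; \int_{\mathds{R}^d} k(\eta \cup x)\, E^a(x,\eta)\, dx \;\geq\; 0
\]
for any $k \geq 0$, so the already-proved lower bound gives $(-B_1^\Delta) k_s \geq 0$ pointwise. By Remark \ref{Jan10rk}, $Q_{\alpha_2 \alpha'}(t-s; B_{2,\upsilon}^\Delta)$ maps $\mathcal{K}^+_{\alpha'}$ into $\mathcal{K}^+_{\alpha_2}$. Hence the integrand is pointwise nonnegative, and so is $h_t - k_t$.

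The main technical obstacle is justifying the Duhamel identity in the ascending scale $\{\mathcal{K}_\alpha\}$, since none of $A_\upsilon^\Delta$, $B_1^\Delta$, $B_{2,\upsilon}^\Delta$ is bounded on a single space. I would handle this in the style of (\ref{56})--(\ref{58}): differentiate the right-hand side in $t$ using (\ref{54b}) and Corollary \ref{JKco}, verify that both sides obey the same Cauchy problem in $\mathcal{K}_{\alpha_2}$ with identical (zero) initial data, and invoke the uniqueness part of Lemma \ref{1lm}. The trisection of $[\alpha_1,\alpha_2]$ is the familiar device for placing the bounded restrictions of the unbounded operators so that all compositions are meaningful throughout $[0,\tau(\alpha_2,\alpha_1)]$.
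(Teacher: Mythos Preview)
Your proof is correct and follows essentially the same route as the paper: the lower bound via Lemma~\ref{ILlm} and (\ref{32a}), the Duhamel representation (\ref{61}) of $h_t-k_t$ with the trisection (\ref{Jan41}), and the conclusion from the positivity of $-B_1^\Delta$ together with Remark~\ref{Jan10rk}. One small remark: the uniqueness you invoke to justify the Duhamel identity is not literally Lemma~\ref{1lm} (which concerns $L^\Delta_{\alpha_2}=A^\Delta_\upsilon+B^\Delta_\upsilon$) but its analogue for the evolution generated by $A^\Delta_\upsilon+B^\Delta_{2,\upsilon}$; this is covered by Lemma~\ref{3lm} with $\mathbf{B}=B^\Delta_{2,\upsilon}$, so the argument goes through.
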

\begin{proof}
The left-hand side inequality follows by Lemma \ref{ILlm} and
(\ref{32a}). By the second line in (\ref{54a}) we conclude that $w_t
= Q_{\alpha_2 \alpha_1}(t; B_{2,\upsilon}^\Delta) k_0$ is the unique
solution of the equation
\begin{equation*}
\dot{w}_t = ((A^\Delta_\upsilon)_{\alpha_2} +
(B^\Delta_{2,\upsilon})_{\alpha_2}) w_t, \qquad w_t|_{t=0}= k_0,
\end{equation*}
on the time interval $[0,
T(\alpha_2,\alpha_1;B_{2,\upsilon}^\Delta))\supset [0,
T(\alpha_2,\alpha_1;B_{\upsilon}^\Delta))$ since
$T(\alpha_2,\alpha_1;B_\upsilon^\Delta)\le
T(\alpha_2,\alpha_1;B_{2,\upsilon}^\Delta)$. Then we have that $w_t
-k_t \in \mathcal{K}_{\alpha_2}$ for all
$t\leq\tau(\alpha_2,\alpha_1)$. Now we choose $\alpha', \alpha \in
[\alpha_1, \alpha_2]$ according to (\ref{Jan41}) so that
(\ref{Jan40}) holds, and then write
\begin{eqnarray}
  \label{61}
  w_t -k_t & = & (Q_{\alpha_2 \alpha_1}(t;
B_{2,\upsilon}^\Delta)k_0)(\eta) - (Q_{\alpha_2 \alpha_1}(t;
B_{\upsilon}^\Delta)k_0)(\eta) \\[.2cm] \nonumber & = &
\int_0^t
Q_{\alpha_2\alpha'}(t-s;B_{2,\upsilon}^\Delta)(-B_1^\Delta)_{\alpha'\alpha}k_s
ds, \qquad t<\tau(\alpha_2,\alpha_1),
\end{eqnarray}
where the operator $(-B_1^\Delta)_{\alpha'\alpha}$ is positive with
respect to the cone $\mathcal{K}_\alpha^+$ defined in (\ref{32a}).
In the integral in (\ref{61}), for all $s\in [0,
\tau(\alpha_2,\alpha_1)]$, we have that $k_s \in
\mathcal{K}_{\alpha}$ and
$Q_{\alpha_2\alpha'}(t-s;B_{2,\upsilon}^\Delta) \in
\mathcal{L}(\mathcal{K}_{\alpha'},\mathcal{K}_{\alpha_2})$ is
positive. We also have that $k_s \in \mathcal{K}_{\alpha}^\star
\subset \mathcal{K}_{\alpha}^+$ (by Lemma \ref{ILlm}). Therefore
$w_t -k_t \in \mathcal{K}_{\alpha_2}^+$ for $t\le
\tau(\alpha_2,\alpha_1)$, which yields (\ref{59}).
\end{proof}
The next step is to compare $k_t$ with
\begin{equation}
  \label{63}
r_t(\eta) = \|k_0\|_{\alpha_1}\exp\left( (\alpha_1 + c
t)|\eta|\right),
\end{equation}
where $\alpha_1$ is as in Lemma \ref{complm} and
\begin{equation}
  \label{64}
c  = \langle b \rangle + \upsilon  - m_*, \qquad m_*= \inf_{x\in
\mathds{R}^d} m(x).
\end{equation}
Let us show that $r_t\in \mathcal{K}_\alpha$ for $t\leq
\tau(\alpha_2, \alpha_1)$, where $\alpha$ is given in (\ref{Jan41}).
In view of (\ref{nk}), this is the case if the following holds
\begin{equation}
  \label{64a}
\alpha_1 + c \tau(\alpha_2, \alpha_1) \leq \frac{1}{3}\alpha_2 +
\frac{2}{3}\alpha_1,
\end{equation}
which amounts to $c \leq 2\langle b \rangle + \upsilon + \langle a
\rangle e^{\alpha_2}$, see (\ref{Jan40}) and (\ref{34}). The latter
obviously holds by (\ref{64}).
\begin{lemma}
  \label{comp1lm}
Let $\alpha_1$, $\alpha_2$ and $k_t = Q_{\alpha_2\alpha_1}(t)k_0$ be
as in Lemma \ref{complm}, and $r_t$ be as in (\ref{63}), (\ref{64}).
Then $k_t (\eta) \leq r_t(\eta)$ for all $t\leq
\tau(\alpha_2,\alpha_1)$ and $\eta\in \Gamma_0$.
\end{lemma}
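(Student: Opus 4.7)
The plan is to build on Lemma \ref{complm}, which already gives $0 \leq k_t(\eta) \leq w_t(\eta) := (Q_{\alpha_2\alpha_1}(t;B_{2,\upsilon}^\Delta) k_0)(\eta)$; it then suffices to show $w_t \leq r_t$ pointwise. Since $k_0 \in \mathcal{K}_{\alpha_1}^\star \subset \mathcal{K}_{\alpha_1}^{+}$, one has $0 \leq k_0(\eta) \leq \|k_0\|_{\alpha_1} e^{\alpha_1|\eta|} = r_0(\eta)$, so $v_0 := r_0 - k_0 \geq 0$; by the verification (\ref{64a}), $r_t$ lies in $\mathcal{K}_\alpha \subset \mathcal{K}_{\alpha_2}$ for all $t \in [0,\tau(\alpha_2,\alpha_1)]$, so all operations below remain in the scale.

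The heart of the argument is a direct computation of $f_t(\eta) := \dot r_t(\eta) - ((A_\upsilon^\Delta + B_{2,\upsilon}^\Delta) r_t)(\eta)$. Because $r_t(\eta) = \|k_0\|_{\alpha_1} e^{(\alpha_1+ct)|\eta|}$ factorises over $\eta$, each term in (\ref{37})--(\ref{46}) acts on $r_t$ as a scalar multiplier: the $-\upsilon|\eta|$ piece of $A_{1,\upsilon}^\Delta$ cancels the $+\upsilon|\eta|$ piece of $B_{2,\upsilon}^\Delta$; $A_2^\Delta$ contributes $e^{-(\alpha_1+ct)} E^b(\eta)\,r_t(\eta)$; and the pure $b$-part of $B_{2,\upsilon}^\Delta$ contributes $2\langle b\rangle|\eta|\,r_t(\eta)$. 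Collecting terms,
\begin{equation*}
f_t(\eta) = \bigl[\, c|\eta| + M(\eta) + E^a(\eta) - \langle b\rangle|\eta| - e^{-(\alpha_1+ct)} E^b(\eta) \,\bigr]\, r_t(\eta).
\end{equation*}
Since $\alpha_1 + c t \geq \alpha_1 > -\log \omega$ implies $e^{-(\alpha_1+ct)}/\omega \leq 1$, Proposition \ref{2pn} gives $e^{-(\alpha_1+ct)} E^b(\eta) \leq \upsilon|\eta| + E^a(\eta)$; together with $M(\eta) \geq m_*|\eta|$, the bracket is at least $(c + m_* - \langle b\rangle - \upsilon)|\eta| = 0$ by the choice (\ref{64}) of $c$. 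Hence $f_t \geq 0$ pointwise on $\Gamma_0$.

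The last step is a Duhamel comparison. The difference $v_t := r_t - w_t$ satisfies the inhomogeneous Cauchy problem $\dot v_t = (A_\upsilon^\Delta + B_{2,\upsilon}^\Delta) v_t + f_t$ with $v_0 \geq 0$, and invoking the resolving family from the second line of (\ref{54a}) gives
\begin{equation*}
v_t = Q_{\alpha_2\alpha_1}(t;B_{2,\upsilon}^\Delta)\, v_0 + \int_0^t Q_{\alpha_2\alpha'}(t-s;B_{2,\upsilon}^\Delta)\, f_s\, ds,
\end{equation*}
with $\alpha'$ as in (\ref{Jan41}); the integrand is well-defined because the bracket in $f_s$ grows only polynomially in $|\eta|$ while $r_s \in \mathcal{K}_\alpha$ with $\alpha < \alpha'$, placing $f_s \in \mathcal{K}_{\alpha'}$, and $t-s < T(\alpha_2,\alpha')$ by (\ref{Jan40}). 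By Remark \ref{Jan10rk} each operator $Q_{\cdot\cdot}(\cdot;B_{2,\upsilon}^\Delta)$ preserves the positive cone, so $v_0 \geq 0$ and $f_s \geq 0$ force $v_t \geq 0$, i.e.\ $w_t \leq r_t$, which combined with Lemma \ref{complm} yields $k_t \leq r_t$. The delicate point I expect to spend the most care on is keeping $\dot r_t$, $(A_\upsilon^\Delta + B_{2,\upsilon}^\Delta) r_t$, $f_s$, and the resolving operators consistently in the scale $\mathcal{K}_{\alpha_1} \subset \mathcal{K}_\alpha \subset \mathcal{K}_{\alpha'} \subset \mathcal{K}_{\alpha_2}$; this is precisely what the strict inequality $\alpha_1 > -\log\omega$ and the interpolation arranged in (\ref{Jan41}) are designed to deliver.
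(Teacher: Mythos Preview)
Your proof is correct and follows essentially the same route as the paper: the paper splits the comparison into two steps, first $w_t \leq \tilde{w}_t := Q_{\alpha_2\alpha_1}(t;B_{2,\upsilon}^\Delta) r_0$ (positivity from Remark \ref{Jan10rk} applied to $r_0 - k_0 \geq 0$) and then $\tilde{w}_t \leq r_t$ via the Duhamel identity (\ref{65}), whereas you fold both into a single Duhamel formula for $v_t = r_t - w_t$; the core computation---your $f_t \geq 0$, equivalently the paper's $(Dr_t)(\eta) \leq 0$ in (\ref{66})---and the use of Proposition \ref{2pn} with the choice (\ref{64}) of $c$ are identical. One small wording slip: your chain ``$\alpha_1 + ct \geq \alpha_1 > -\log\omega$'' tacitly assumes $c \geq 0$, but the paper's own invocation of Proposition \ref{2pn} at that spot carries the same implicit restriction.
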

\begin{proof}
The idea is to show that $w_t (\eta) \leq r_t(\eta)$ and then to
apply the estimate obtained in Lemma \ref{complm}. Set $\tilde{w}_t
=Q_{\alpha_2 \alpha_1}(t; B_{2,\upsilon}^\Delta) r_0$. Since $k_0
\in \mathcal{K}_{\alpha_1}$, we have that $k_0 \leq r_0$. Then  by
the positivity discussed in Remark \ref{Jan10rk} we obtain $w_t \leq
\tilde{w}_t$, and hence $k_t \leq \tilde{w}_t$, holding for all $t
\leq \tau(\alpha_2, \alpha_1)$. Thus, it remains to prove that
$\tilde{w}_t (\eta) \leq r_t(\eta)$. To this end we write, cf.
(\ref{61}),
\begin{equation}
  \label{65}
\tilde{w}_t - r_t = \int_0^t Q_{\alpha_2 \alpha'}(t-s;
B_{2,\upsilon}^\Delta) D_{\alpha'\alpha} r_s d s,
\end{equation}
where $\alpha'$ and $\alpha$ are as in (\ref{Jan41}) and the bounded
operator $D_{\alpha'\alpha}$ acts as follows: $D= A^\Delta_\upsilon
+ B^\Delta_{2,\upsilon} - J_{c}$, where $(J_{c}k)(\eta) = c |\eta|
k(\eta)$ with $c$ as in (\ref{64}). The validity of (\ref{65}) can
be established by taking the $t$-derivative of both sides and then
taking into account (\ref{63}) and (\ref{54a}). Note that $r_s$ in
(\ref{65}) lies in $\mathcal{K}_\alpha$, as it was shown above. By
means of (\ref{21}) the action of $D$ on $r_s$ can be calculated
explicitly yielding
\begin{eqnarray}
  \label{66}
(D r_t)(\eta) & = & - \Psi_\upsilon (\eta) r_t(\eta) +
\int_{\mathds{R}^d}\sum_{y_1\in \eta} \sum_{y_2\in \eta\setminus
\eta_1} r_t(\eta\cup x\setminus \{y_1,y_2\}) b(x|y_1,y_2) d x \\[.2cm] \nonumber&
+ & \upsilon |\eta| r_t(\eta) + 2\int_{(\mathds{R}^d)^2}\sum_{y_1\in
\eta} r_t(\eta\cup x\setminus y_1)  b(x|y_1,y_2) d x  d y_2 -
c|\eta| r_t(\eta) \\[.2cm]\nonumber & = & \bigg{(} - M(\eta) -
E^a(\eta) - \langle b \rangle |\eta|  + e^{-\alpha_1 - c t}E^b
(\eta) + 2 \langle b \rangle |\eta| - c|\eta| \bigg{)} r_t (\eta).
\end{eqnarray}
Since $\alpha_1 >-\log \omega$, by Proposition \ref{2pn} we have
that
\[
- E^a(\eta) + e^{-\alpha_1 - c t}E^b (\eta) \leq \upsilon |\eta|,
\]
by which and (\ref{64}) we obtain from (\ref{66}) that $(D
r_t)(\eta) \leq 0$. We apply this in (\ref{65}) and obtain
$\tilde{w}_t \leq r_t$ which completes the proof. \end{proof}
\begin{remark}
  \label{JK10rk}
By (\ref{64}) we obtain that $c\leq 0$ (and hence $k_t\in
\mathcal{K}_{\alpha_1}$) whenever
\[
m_* \geq  \langle b \rangle + \upsilon.
\]
In the short dispersal case, see Remark \ref{1rk}, one can take
$\upsilon =0$. In the long dispersal case, by Proposition \ref{pfi}
one can make $\upsilon$ as small as one wants by taking small enough
$\omega$ and hence big enough $\alpha_1$. Then, the evolution of
$k_t$ leaves the initial space invariant if the following holds
\begin{equation}
  \label{67}
m_*  >  \langle b \rangle.
\end{equation}
In the short dispersal case, one can allow equality in (\ref{67}).
\end{remark}

\subsection{Completing the proof}

The choice of the initial space should satisfy the condition
$\alpha_1 > -\log \omega$. At the same time, the parameter
$\alpha_2>\alpha_1$ can be taken arbitrarily. In view of the
dependence of $T(\alpha_2, \alpha_1)$ on $\alpha_2$, see (\ref{34}),
the function $\alpha_2 \mapsto T(\alpha_2, \alpha_1)$ attains
maximum at $\alpha_2 = \alpha_1 + \delta (\alpha_1)$, where
\begin{equation}
 \label{Jan26}
 \delta (\alpha)  =  1+ W\left(\frac{2\langle b \rangle + \upsilon}{\langle a \rangle} e^{-\alpha -1} \right),
\end{equation}
Here $W$ is Lambert's function, see \cite{W}. Then we have
\begin{equation}
  \label{68}
T_{\max} (\alpha_1) = \max_{\alpha_2 >\alpha_1} T(\alpha_2,
\alpha_1) = \exp\left( -\alpha_1 - \delta(\alpha_1) \right) /\langle
a \rangle.
\end{equation}
{\it Proof of Theorem \ref{1tm}.} Fix $\upsilon$ and then find small
$\omega$ (see Proposition \ref{pfi}) such that the inequality in
Proposition \ref{2pn} holds true. Thereafter, take $\alpha_0 >-\log
\omega$ such that $k_{\mu_0}\in \mathcal{K}_{\alpha_0}$. Then take
$c$ as given in (\ref{64}) with this $\upsilon$. Next, set $T_1 =
T_{\max} (\alpha_0)/3$, see (\ref{68}), and also $\alpha^*_1 =
\alpha_0+ cT_1$, $\alpha_1 = \alpha_0 + \delta(\alpha_0)$, see
(\ref{Jan26}). Clearly, $\alpha_1^* < \alpha_1$ that can be checked
similarly as in (\ref{64a}). By Lemma \ref{ILlm} it follows that,
for $t\leq T_1$, $k_t = Q_{\alpha_1 \alpha_0} (t) k_{\mu_0}$ lies in
$\mathcal{K}^\star_{\alpha_1}$, whereas by Lemma \ref{comp1lm} we
have that $k_t\in \mathcal{K}^\star_{\alpha_t}$ with $\alpha_t =
\alpha_0 +c t\leq \alpha_1^*$. Clearly, for $T<T_1$, the map $[0,
T)\ni t \mapsto k_t\in \mathcal{K}_{\alpha_T}$ is continuous and
continuously differentiable, and both claims (i) and (ii) are
satisfied since (by construction) $\dot{k}_t = L^\Delta_{\alpha_1}
k_t = L^\Delta_{\alpha_T} k_t$, see (\ref{29}). Now, for $n\geq 2$,
we set
\begin{gather}
  \label{70}
T_n  =  T_{\rm max} (\alpha^*_{n-1}) /3, \quad \alpha_n^* =
\alpha^*_{n-1} + c T_n , \\[.2cm] \nonumber \alpha_n  =  \alpha^*_{n-1} +
\delta(\alpha^*_{n-1}).
\end{gather}
As for $n=1$, we have that $\alpha_n^* < \alpha_n$ and $T_n<
T(\alpha_n,\alpha_{n-1}^*)$ holding for all $n\geq 2$. Thereafter,
set
\begin{equation*}
k_t^{(n)} = Q_{\alpha_n \alpha_{n-1}^*} (t)
k^{(n-1)}_{T_{n-1}},\qquad t \in [0,T (\alpha_n,\alpha_{n-1}^*)),
\end{equation*}
where $k^{(1)}_t = Q_{\alpha_1 \alpha_0} (t) k_{\mu_0}$. Then, for
each $T<T_n$ both maps $[0,T)\ni t \mapsto k^{(n)}_t \in
\mathcal{K}_{\bar{\alpha}_{n-1}(T)}$ and $[0,T)\ni t \mapsto
L^\Delta_{\bar{\alpha}_{n-1}(T)} k^{(n)}_t \in
\mathcal{K}_{\bar{\alpha}_{n-1}(T)}$ are continuous, where
$\bar{\alpha}_{n-1}(T) := \alpha_{n-1}^* + cT$. The continuity of
the latter map follows by the fact that $k^{(n)}_t \in
\mathcal{K}_{\bar{\alpha}_{n-1}(t)}\hookrightarrow
\mathcal{K}_{\bar{\alpha}_{n-1}(T)}$ and that
$L^\Delta_{\bar{\alpha}_{n-1}(T)}|_{\mathcal{K}_{\bar{\alpha}_{n-1}(t)}}
= L^\Delta_{\bar{\alpha}_{n-1}(T)\bar{\alpha}_{n-1}(t)}$, see
(\ref{29}). Moreover $k^{(n)}_0 = k^{(n-1)}_{T_{n-1}}$ and
$L^\Delta_{\alpha^*_{n-1}+\varepsilon } k^{(n)}_0 =
L^\Delta_{\alpha_{n-1}^*+\varepsilon } k^{(n-1)}_{T_{n-1}}$ holding
for each $\varepsilon>0$. Then the map in question $t\mapsto k_t$ is
\begin{equation*}
  k_{t+T_1 \cdots + T_{n-1}} = k_t^{(n)}, \qquad t\in [0,T_n],
\end{equation*}
provided that the series $\sum_{n\geq 1}T_n$ is divergent. By
(\ref{68}) we have
\begin{equation}
  \label{71}
\sum_{n\geq 1}T_n = \frac{1}{3 \langle a \rangle} \sum_{n\geq
1}\exp\left( -\alpha_{n-1}^* - \delta(\alpha_{n-1}^* ) \right).
\end{equation}
For the convergence of the series in the right-hand side it is
necessary that $\alpha_{n-1}^* + \delta(\alpha_{n-1}^*)\to +\infty$,
and hence  $\alpha_{n-1}^* \to +\infty$ as $n\to +\infty$, since
$\delta(\alpha)$ is decreasing. By (\ref{70}) we have $\alpha_n^*=
\alpha_0 + c(T_1+\cdots +T_n)$. Then the convergence of $\sum_{n\geq
1}T_n$ would imply that $\alpha_n^*\leq \alpha^*$ for some number
$\alpha^*>0$ that contradicts the convergence of the right-hand side
of (\ref{71}). \hfill{$\square$} \vskip.1cm \noindent {\it Proof of
Corollary \ref{Jaco}.} For a compact $\Lambda$, let us show that
$\mu^\Lambda_t\in \mathcal{D}$, that is, $R_{\mu_t}^\Lambda \in
\mathcal{D}^\dagger$, see (\ref{L2}). For  $k_t =k_{\mu_t}$
described in Theorem \ref{1tm}, by (\ref{13}) we have
\begin{equation*}
R_{\mu_t}^{\Lambda}(\eta)=\int_{\Gamma_{\Lambda}}
(-1)^{|\xi|}k_t(\eta \cup \xi)\lambda(d\xi).
\end{equation*}
Let $\alpha >\alpha_0$ be such that $k_t\in \mathcal{K}_{\alpha}$. Then using (\ref{nk}), (\ref{8}), (\ref{22}) and (\ref{25}) we calculate
\begin{eqnarray*}
\int_{\Gamma_{\Lambda}}\Psi(\eta) R_{\mu_t}^{\Lambda}(\eta)
\lambda(d\eta)&=& \int_{\Gamma_{\Lambda}}\Psi(\eta)
\int_{\Gamma_{\Lambda}} (-1)^{|\xi|}k_t(\eta \cup
\xi)\lambda(d\xi)\lambda(d\eta)\\[.2cm]
&\le &  \int_{\Gamma_{\Lambda}}\Psi(\eta) \|k\|_{\alpha}e^{\alpha|\eta|}\lambda(d\eta) \int_{\Gamma_{\Lambda}}e^{\alpha|\xi|}
\lambda(d\xi)\\[.2cm]
&\le& \|k\|_{\alpha} (m^*+a^*+\langle b\rangle) \int_{\Gamma_{\Lambda}} |\eta|^2e^{\alpha|\eta|}\lambda(d\eta)
\exp\left(|\Lambda|e^{\alpha}\right)\\[.2cm]
&=& \|k\|_{\alpha} (m^*+a^*+\langle b\rangle) |\Lambda|e^\alpha
\left( 2 +
|\Lambda|e^\alpha\right)\exp\left(2|\Lambda|e^{\alpha}\right),
\end{eqnarray*}
where $|\Lambda|$ is the Euclidean volume of $\Lambda$. That yields
$\mu_t^{\Lambda} \in \mathcal{D}$. The validity of (\ref{Ja})
follows by (\ref{11}). \hfill{$\square$}

\section*{Acknowledgment}
The authors are grateful to Krzysztof Pilorz for valuable assistance
and discussions. In the period 2016-17, the research of both authors
related to this paper was supported by the European Commission under
the project STREVCOMS PIRSES-2013-612669. In March 2017, during his
stay in Bucharest Yuri Kozitsky was supported by Research Institute
of the University of Bucharest. In 2018, he was supported by
National Science Centre, Poland, grant 2017/25/B/ST1/00051. All
these supports are cordially acknowledged.
\appendix
\setcounter{secnumdepth}{1}

\section{The proof of Proposition \ref{2pn}}
According to Assumption \ref{ass1}, $\beta$ is Riemann integrable,
then for an arbitrary $\varepsilon >0$, one can divide
$\mathbb{R}^d$ into equal cubic cells $E_l$, $l\in \mathbb{N}$, of
side $h>0$ such that the following holds
\begin{equation}
\label{pz} h^d\sum_{l=1}^{+\infty} \beta_l \le \langle b
\rangle+\varepsilon, \qquad \beta_l:=\sup_{x\in E_l}\beta(x).
\end{equation}
For $r>0$, set $K_r(x)=\lbrace y\in \mathbb{R}^d:|x-y|<r \rbrace$,
$x\in \mathbb{R}^d$, and
\begin{equation}
\label{ar}
a_r = \inf_{x\in K_{2r}(0)}a(x).
\end{equation}
Then we fix $\varepsilon$ and pick $r>0$ such that $a_r>0$.  For
$r$, $h$ and $\varepsilon$ as above, we prove the statement by the
induction in the number of points in $\eta$. By (\ref{fi}) we
rewrite (\ref{2pnN}) in the form
\begin{equation}
\label{u} U_{\omega}(\eta):=\upsilon|\eta|+\Phi_{\omega}(\eta)\ge 0,
\end{equation}
and, for some $x\in \eta$, consider
\begin{eqnarray*}
U_{\omega}(x,\eta \setminus x)&:= & U_{\omega}(\eta)- U_{\omega}(\eta \setminus x)\\
&=& \upsilon +2 \left( \sum_{y\in \eta \setminus x} a(x-y)-\omega \sum_{y\in \eta \setminus x}\beta(x-y)  \right).
\end{eqnarray*}
Set $c_d=|K_1|$ and let $\Delta(d)$ be the packing constant for
rigid balls in $\mathbb{R}^d$, cf. \cite{gro}. Then set
\begin{equation}
\label{del} \delta=\max \lbrace \beta^*; (\langle b
\rangle+\varepsilon)g_d(h,r), \rbrace,
\end{equation}
where
$$g_d(h,r)=\frac{\Delta(d)}{c_d}\left( \frac{h+2r}{hr} \right)^d.$$
Next, assume that $\upsilon$ and $\omega$ satisfy, cf. (\ref{ar}),
\begin{equation}
\label{ome} \omega \le \min \left\{ \frac{\upsilon}{2\delta};
\frac{a_r}{\delta} \right\}.
\end{equation}
Let us show that
\begin{itemize}
\item[(i)] for each $\eta=\lbrace x,y \rbrace$, (\ref{ome}) implies (\ref{u});
\item[(ii)] for each $\eta$, one finds $x\in \eta$ such that $U_{\omega}(x,\eta \setminus x)\ge 0$ whenever (\ref{ome}) holds.
\end{itemize}
To prove (i) by (\ref{ome}) and (\ref{del}) we get
\begin{eqnarray*}
U_{\omega}(\lbrace x,y \rbrace)&=&2 \upsilon +2a(x-y)-2\omega\beta(x-y)\\
&\geq & (\upsilon - 2\omega\beta^*)+2a(x-y)\ge 0.
\end{eqnarray*}
To prove (ii), for $y\in \eta$, we set
\begin{equation}
\label{s}
s=\max_{y\in \eta} |\eta \cap K_{2r}(y)|.
\end{equation}
Let also $x\in \eta$ be such that $|\eta \cap K_{2r}(x)|=s$. For
this $x$, by $E_l(x)$, $l\in \mathbb{N}$, we denote the
corresponding translates of $E_l$ which appear in (\ref{pz}). Set
$\eta_l=\eta \cap E_l(x)$ and let $l_* \in \mathbb{N}$ be such that
$\eta \subset \bigcup_{l\le l_*}E_l(x)$ which is possible since
$\eta$ is finite. For a given $l$, a subset $\zeta_l \subset \eta_l$
is called $r-$admissible if for each distinct $y,z\in \zeta_l$, one
has that $K_r(y)\cap K_r(z)= \emptyset$. Such a subset $\zeta_l$ is
called maximal $r-$admissible if $|\zeta_l|\ge |\zeta'|$ for any
other $r-$admissible $\zeta_l'$. It is clear that
\begin{equation}
\label{etal}
\eta_l \subset \bigcup_{z\in \zeta_l}K_{2r}(z).
\end{equation}
Otherwise, one finds $y\in \eta_l$ such that $|y-z|\ge 2r$, for each
$z\in \zeta_l$, which yields that $\zeta_l$ is not maximal. Since
all the balls $K_r(z)$, $z\in \zeta_l$, are contained in the
$h-$extended cell
\begin{equation*}
E_l^h(x):=\lbrace y\in \mathbb{R}^d: \inf_{z\in E_l(x)}|y-z|\le h \rbrace,
\end{equation*}
their maximum number - and hence $|\zeta_l|$ - can be estimated as follows
\begin{equation}
\label{zetal}
|\zeta_l|\le \Delta(d)V(E_l^h(x))/c_dr^d=h^d\frac{\Delta(d)}{c_d}\left( \frac{h+2r}{hr} \right)^d=h^dg_d(h,r),
\end{equation}
where $c_d$ and $\Delta(d)$ are as in (\ref{del}). Then by (\ref{s}) and (\ref{etal}) we get
\begin{equation*}
\sum_{y\in \eta \setminus x}\beta(x-y)\le \sum_{l=1}^{l_*} \sum_{z\in \zeta_l} \sum_{y\in K_{2r}(z)\cap \eta_l}\beta_l.
\end{equation*}
The cardinality of $K_{2r}(z)\cap \eta_l$ does not exceed $s$, see
(\ref{s}), whereas the cardinality of $\zeta_l$ satisfies
(\ref{zetal}). Then
\begin{equation}
\label{ogr} \sum_{y\in\eta\setminus x}\beta(x-y)\le s
g_d(h,r)\sum_{l=1}^{\infty}\beta_l h^d \le sg_d(h,r)(\langle b
\rangle +\varepsilon)\le s\delta.
\end{equation}
On other hand, by (\ref{ar}) and (\ref{s}) we get
\begin{equation*}
\sum_{y\in\eta\setminus x}a(x-y)\ge \sum_{y\in (\eta\setminus x)\cap K_{2r}(x)}a(x-y) \ge (s-1)a_r.
\end{equation*}
We use this estimate and (\ref{ogr}) in (\ref{u}) and obtain
$$U_{\omega}(x, \eta \setminus x)\ge 2\delta
\left[ \left( \frac{\upsilon}{2\delta}-\omega
\right)+(s-1)\left(\frac{a_r}{\delta}-\omega \right) \right]\ge 0,$$
see (\ref{ome}). Thus, (ii) also holds and the proof follows by the
induction in $|\eta|$.

\end{document}